\newcommand\reallywidehat[1]{%
\savestack{\tmpbox}{\stretchto{%
  \scaleto{%
    \scalerel*[\widthof{\ensuremath{#1}}]{\kern-.6pt\bigwedge\kern-.6pt}%
    {\rule[-\textheight/2]{1ex}{\textheight}}
  }{\textheight}%
}{0.5ex}}%
\stackon[1pt]{#1}{\tmpbox}%
}
\renewcommand{\phi}{\varphi}
\renewcommand{\Re}{\textup{Re }}
\renewcommand{\Im}{\textup{Im }}
\newcommand{\meanv}[1]{\langle#1\rangle}
\newcommand{\mc}[1]{\mathcal{#1}}
\newcommand{\mb}[1]{\mathbb{#1}}
\DeclareMathOperator{\dive}{div}
\def\be{\begin{equation}}
\def\ee{\end{equation}}
\def\bea{\begin{eqnarray}}
\def\eea{\end{eqnarray}}
\def\ni{\noindent}
\def\nn{\nonumber}
\def\T{\mathbb{T}}
\def\C{\mathbb{C}}
\def\Z{\mathbb{Z}}
\def\N{\mathbb{N}}
\def\B{\mathscr{B}}
\def\Ga{\mathscr{G}}
\DeclareMathSymbol{\leqslant}{\mathalpha}{AMSa}{"36} 
\DeclareMathSymbol{\geqslant}{\mathalpha}{AMSa}{"3E} 
\DeclareMathSymbol{\eset}{\mathalpha}{AMSb}{"3F}     
\renewcommand{\leq}{\;\leqslant\;}                   
\renewcommand{\geq}{\;\geqslant\;}                   
\DeclareMathOperator{\Id}{Id}
\DeclareMathOperator{\Span}{span}
\def\a{\alpha}
\def\e{\varepsilon}
\def\d{\delta}
\def\g{\gamma}
\def\D{\Delta}
\def\l{\lambda}
\def\r{\rho}
\def\s{\sigma}
\def\R{\mathbb{R}}
\def\C{\mathbb{C}}
\theoremstyle{plain}
\newtheorem{theorem}{Theorem}[section]
\newtheorem{lemma}[theorem]{Lemma}
\newtheorem{proposition}[theorem]{Proposition}
\newtheorem{corollary}[theorem]{Corollary}
\theoremstyle{definition}
\theoremstyle{remark}
\newtheorem{remark}[theorem]{Remark}
\numberwithin{equation}{section}
\definecolor{light}{gray}{.9}
\author{Giuseppe Genovese}
\address{Department Mathematik und Informatik, Universit\"at Basel Spiegelgasse 1, CH-4051 Basel, Switzerland}
\email{giuseppe.genovese@unibas.ch}
\author{Renato Luc\`a}
\address{BCAM - Basque Center for Applied Mathematics, 48009 Bilbao, Spain and Ikerbasque, Basque Foundation
for Science, 48011 Bilbao, Spain.}
\email{rluca@bcamath.org} 
\author{Nikolay Tzvetkov}
\address{Department of Mathematics (AGM), University of Cergy-Pontoise, 2, av. Adolphe Chauvin, 95302 Cergy-Pontoise Cedex, FRANCE}
\email{nikolay.tzvetkov@u-cergy.fr}
\title[Quasi-invariance of Gaussian Measures]
{Quasi-invariance of low regularity Gaussian measures under the gauge map of the periodic derivative NLS}
\date{\today}
\subjclass[2000]{35Q30, 35BXX, 37K05, 37L50, 35Q55, 37K10, 37K30, 17B69, 17B80}
\keywords{Gaussian measures, anticipative transformations}
\begin{document}
\begin{abstract}
The periodic DNLS gauge is an anticipative map with singular generator which revealed crucial in the study of the periodic derivative NLS. We prove quasi-invariance of the Gaussian measure on $L^2(\T)$ with covariance $[1+(-\D)^{s}]^{-1}$ under these transformations for any $s>\frac12$. This extends previous achievements by Nahmod, Ray-Bellet, Sheffield and Staffilani (2011) and Genovese, Luc\`a and Valeri (2018), who proved the result for integer values of the regularity parameter $s$.
\end{abstract}

\maketitle

\section{Introduction}

Let $\T:=\R /2\pi\Z$ and $\a \in \R$. We introduce $\Ga_\a \, : \, L^2(\mathbb{T}) \to L^2(\mathbb{T})$ to be defined by
\begin{equation}\label{gauge-change}
\Ga_\a (u)(x):= e^{ i \alpha \mathcal{I}[u(x)] }  u(x)\,,
\end{equation}
where   
\begin{equation}\label{DefMathcali}
 \mathcal{I}[u(x)] := \frac{1}{2\pi }\int_0^{2\pi }d\theta\int_{\theta}^x\Big(|u(y)|^2-\frac{\|u\|_{L^2(\T)}^2}{2\pi }\Big)dy \,.
\end{equation}
To get some insight, $\mathcal{I}[u(x)]$ can be though of as the unique periodic primitive of $|u|^2$ with zero average, therefore we can formally write
$$
\mathcal{I}[u(x)]=\partial_x^{-1}\Big(|u|^2-\frac{1}{2\pi}\|u\|_{L^2}^2\Big)\,. 
$$
We will much exploit this intuition in the sequel. 

An alternative formulation of (\ref{gauge-change}) is through the initial value problem 
\be\label{eq:gauge-eq}
\frac{d}{d\a}\Ga_{\a} u=i\mc I[\Ga_{\a} u]\Ga_{\a} u\,,\quad \Ga_{0} u=u\,.
\ee
The map $\alpha \to \Ga_\a$ is a one parameter group of transformations of $L^2(\T)$, in fact
\begin{equation}\label{eq:gauge-properties}
\Ga_0=\Id
\qquad
\text{and}
\qquad
\Ga_{\alpha_1}\circ\Ga_{\alpha_2}=\Ga_{\alpha_1+\alpha_2}\,,
\quad
\text{for any }\alpha_1,\alpha_2\in\R
\,. 
\end{equation}
This gauge was introduced in the periodic setting in \cite{Herr} in the context of the derivative nonlinear Sch\"odinger equation (DNLS). It has been conveniently used in different contexts regarding the DNLS: just to mention few examples, the study of the local well-posedness at low regularity is based on the use of such a gauge transformation \cite{Herr, GH, deng} and it revealed to be crucial also in the proof of the invariance of the Gibbs measures associated with the integrals of motions of DNLS \cite{NOR-BS12, GLV2}.

In this paper we investigate the way these maps transform the Gaussian measure on $L^{2}(\T)$ with covariance operator $[1+(-\D)^{s}]^{-1}$ for $s>\frac12$. Thanks to separability and the isomorphism between~$\C^{2N+1}$ and 
$$
E_N:=\Span_{\mb C}\{e^{inx}\,:\, |n|\leq N\}\,
$$
the space $L^2(\T)$ inherits the measurable-space structure by a standard limit procedure and we will denote by
$\mathscr{B}(L^2(\T))$ the Borel $\s$-algebra on~$L^{2}(\T)$.
We denote by $\g_s$ the Gaussian measure on $\B(L^2(\T)$ induced by the map
\begin{equation}\label{Def:gammaK}
\omega\longmapsto \sum_{n\in\Z} \frac{g_n(\omega)}{(1+|n|^{2s})^{\frac{1}{2}}}\, e^{inx} \, ,
\end{equation}
where $\{g_n(\omega)\}_{n\in\Z}$ are independent, identically distributed complex centred Gaussian random variables with unitary variance.
For any $s\in\R$ the triple $(L^2(\T), \B(L^2(\T)), \g_s)$ is a Gaussian probability space satisfying the 
concentration properties:
$$\g_s \bigg( \bigcap_{s' < s-\frac{1}{2}}  H^{s'}(\T) \bigg)=1 \, ,\quad \g_s \bigg(H^{s-\frac12}(\T) \bigg)=0\,. $$  
The $L^p$ spaces associated to $\g_s$ are denoted by $L^p(\g_s)$. 
For more details about this construction we refer for instance to \cite{kuo}.

Our main result is the quasi-invariance under the group $\{\Ga_\a\}_{\a\in\R}$ of $\g_s$ restricted to a ball in~$L^2(\T)$, of arbitrary size, 
defined by $B(R):=\{u\,:\,\|u\|_{L^2(\T)}\leq R\}$ for all $s>\frac12$.  Henceforth we set for brevity
\be\label{eq:def-gammatilde}
\tilde\g_s(A):=\g_s(A\cap\{ \|u\|_ {L^2}\leq R\})\qquad A\in\mathscr B(L^2(\T))\,.
\ee
\begin{theorem}\label{th:gauge}
Let $s>\frac12$, $R>0$. Then for every $\a\in\R$ there is $p_0>1$ and $\r_\a \in L^p(\tilde\g_s)$ for all $p\in[1,p_0)$ such that
\be\label{eq:THgauge}
(\tilde \g_s \circ \Ga_\a)(A)=\int_{A}\r_\a(u)\tilde \g_s(du) \qquad A\in\mathscr B(L^2(\T))\,.
\ee
\end{theorem}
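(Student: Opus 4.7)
The plan is to follow the by-now standard strategy for proving quasi-invariance of Gaussian measures under nonlinear flows, as developed in \cite{NOR-BS12, GLV2} for integer $s$: finite-dimensional approximation, derivation of a transport equation for the density of the pushed-forward measure, uniform $L^p$ bounds via a Malliavin-divergence computation, and passage to the limit. The structural fact driving everything is that $|\Ga_\a u(x)|^2 = |u(x)|^2$ pointwise in $x$, so $\Ga_\a$ is really a pointwise phase rotation with $u$-dependent angle $\a\mc I[u]$; consequently $\mc I[\Ga_\a u]= \mc I[u]$, the $L^2$-mass is preserved exactly along the flow, and the $B(R)$-cutoff in the definition \eqref{eq:def-gammatilde} of $\tilde\g_s$ is compatible with the gauge.

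The first step is to introduce a truncation $\Ga^N_\a$ by solving the projected ODE $\dot v = P_N\bigl(i\mc I[v]\,v\bigr)$ on $E_N$ and coupling it with the identity on $E_N^\perp$. Because $s>\frac12$ ensures that $\mc I[u]$ is $\g_s$-a.s.\ continuous, a Picard argument yields a smooth, invertible flow on $E_N$, and standard Fourier truncation estimates give $\Ga^N_\a u \to \Ga_\a u$ in $L^2(\T)$ almost surely, uniformly for $\a$ in compact sets. For the finite-dimensional map one may then write the density $\r^N_\a = d(\Ga^N_\a)^*\tilde\g_s/d\tilde\g_s$ via the change of variables formula as
\be
\r^N_\a(u) \;=\; \exp\Bigl(\int_0^\a G^N(\Ga^N_{-\b} u)\, d\b\Bigr),
\ee
where $G^N$ is the Malliavin divergence against $\g_s|_{E_N}$ of the truncated vector field, obtained by adding the classical Lebesgue divergence of $P_N(i\mc I[\,\cdot\,]\,\cdot\,)$ and the Gaussian weight contribution, schematically $-2\,\Re\langle(1-\D)^{s} u,\,P_N(i\mc I[u] u)\rangle_{L^2}$. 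The conservation of $\|u\|_{L^2}$ and the fact that $\mc I[u]$ is real-valued remove the most singular terms in $G^N$, leaving a multilinear polynomial expression in the Fourier modes of $u$.

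The heart of the proof is then the uniform-in-$N$ bound $\|G^N\|_{L^q(\tilde\g_s)} \leq C(R,s,\a)$ for some $q>1$, after which Gronwall in $\a$ yields $\|\r^N_\a\|_{L^{p_0}(\tilde\g_s)} \leq e^{C|\a|}$ uniformly in $N$, for some $p_0>1$. This is the step I expect to be the main obstacle at general $s>\frac12$: for $s\in\N$ the weight $(1-\D)^s$ expands by Leibniz into a polynomial in the modes and the algebraic cancellations needed to bound $G^N$ in $L^q$ can be exposed by integration by parts, whereas at non-integer $s$ one must work with $(1-\D)^{s/2}$ and the commutators $[(1-\D)^{s/2},\mc I[u]]$ no longer close algebraically, generating fractional-derivative remainders that must be handled in a dedicated way. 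I would address these by combining a paraproduct decomposition with the Kato--Ponce fractional Leibniz inequality to split $G^N$ into a low-high resonant part carrying the main cancellation and high-high remainders on which hypercontractivity on the Wiener chaoses of $\g_s$ (Nelson's inequality) can be invoked; the $L^2$-cutoff built into $\tilde\g_s$ bounds all remaining multilinear factors uniformly.

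With the uniform $L^{p_0}$ estimate in hand, weak-$L^{p_0}$ compactness of $\{\r^N_\a\}_N$ produces a candidate density $\r_\a$. The a.s.\ convergence $\Ga^N_\a u \to \Ga_\a u$, combined with the uniform integrability furnished by the same bound, then identifies the weak limit as the density of $(\Ga_\a)^*\tilde\g_s$ with respect to $\tilde\g_s$, which is \eqref{eq:THgauge}.
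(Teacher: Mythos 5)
Your overall architecture (truncation, transport equation for the density, pointwise mass conservation, cancellation from $\mc I[u]$ real-valued) matches the paper's, and the cancellation you identify is indeed the heart of the matter (formula \eqref{eq:canc} in the text). However there is a genuine gap in the step where you pass from $\|G^N\|_{L^q(\tilde\g_s)}\leq C$ for a fixed $q>1$ to $\|\r^N_\a\|_{L^{p_0}(\tilde\g_s)}\leq e^{C|\a|}$ ``by Gronwall''. The density is the exponential $\r^N_\a=\exp\bigl(\int_0^\a G^N\circ\Ga^N_{-\b}\,d\b\bigr)$, and the natural Gronwall computation gives
\begin{equation*}
\frac{d}{d\a}\,\E_{\tilde\g_s}\bigl[(\r^N_\a)^{p}\bigr]
= p\,\E_{\tilde\g_s}\bigl[(\r^N_\a)^{p}\,G^N\circ\Ga^N_{-\a}\bigr],
\end{equation*}
and after H\"older with exponent $q$ the right-hand side involves $\E[(\r^N_\a)^{pq'}]^{1/q'}$ with $q'>1$, i.e.\ a \emph{strictly higher} moment of $\r^N_\a$. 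The iteration therefore escapes to all moments, which is tantamount to requiring exponential integrability of $G^N$ (a Novikov-type condition), not merely $L^q$-integrability. Since the generator here is only sub-exponential (the paper proves $\|F_N\|_{L^p(\tilde\g_{s,N})}\lesssim R^*\,p$, i.e.\ linear growth in $p$, which is sharp for a quartic Gaussian chaos), exponential moments exist only for a small range of exponents depending on $R$; a fixed-$q$ bound does not close the Gronwall loop, and a uniform-in-$\a$ bound $\|\r^N_\a\|_{L^{p_0}}\leq e^{C|\a|}$ with $p_0>1$ independent of $\a$ is actually \emph{false} in this problem: the paper's final proposition shows $p_0=p_0(|\a|,R)\to1$ as $|\a|\to\infty$.

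The paper circumvents exactly this obstruction by the Tzvetkov argument from \cite{sigma}: rather than bounding $\r^N_\a$ directly, one differentiates $(\tilde\g_{s,N}\circ\Ga^N_\a)(A)$ in $\a$, applies H\"older with the $L^p$-bound $\|F_N\|_{L^p}\lesssim R^*p$ that \emph{grows linearly in $p$}, integrates, and then optimizes the exponent $p$ \emph{as a function of the target set} $A$ (taking $p\sim\log\tfrac{1}{\tilde\g_s(A)}$). This converts ``$\tilde\g_s(A)$ small'' into ``$(\tilde\g_s\circ\Ga_\a)(A)$ small'' without ever producing a uniform density bound; the quantitative estimate $(\tilde\g_s\circ\Ga_\a)(A)\lesssim\tilde\g_s(A)^{1-\d}$ (Lemmas~\ref{lemma:quasi-quant}, \ref{lemma:quasi-quantGlob}) is obtained first, and only \emph{a posteriori} does one deduce $\r_\a\in L^{p}$ for $p<\d^{-1}$ by a layer-cake argument on $\{\r_\a>\l\}$. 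Your plan skips this optimization and so cannot establish the $L^{p_0}$ integrability of the density. Secondarily, for the fractional $L^q$ bound on the generator you invoke Kato--Ponce and paraproducts; the paper explicitly remarks that the Leibniz-type route that works for integer $s$ does not extend smoothly, and instead expands the symbol $|m_1|^{2s}/(m_1-n_1)$ via a binomial series in the falling factorials (Lemma~\ref{Lemma:decomposizione}) after isolating the low-frequency part by a Littlewood--Paley type splitting, exploiting the imaginary-part cancellation \eqref{eq:canc} for the $k=0$ term. If you pursue Kato--Ponce here, you should check carefully that the commutator remainders can be bounded uniformly in $N$ within the support of $\g_s$, where $u$ is only $H^{s-1/2-}$; this is a nontrivial point the paper devotes Sections~\ref{sect:L2}--\ref{sect:prop} to.
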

The restriction of the measure to a ball $B(R)$ of $L^2$ is possible as $\Ga_\a$ leaves invariant the~$L^2(\T)$ norm for all $\a$. 
It is worthy to remark that, unlike all the other works on the subject \cite{TT10, NR-BSS11, GLV1, GLV2, ber}, we are not imposing any smallness assumption 
on $R$. This observation may be useful in the attempt of proving probabilistc global well-posedness for DNLS without imposing any smallness 
assumption on the $L^2$ norm. 
A remarkable result in this direction is \cite{JLPS} where the authors prove that DNLS is globally well-posed on the real line $\R$ in {\it weighted Sobolev spaces} using the inverse scattering method.
The well-posedness in translation invariant spaces and large $L^2$ norm remains a challenging open problem. 

The transformation of Gaussian measures have been intensively studied since long, starting from the fundamental theorem of Cameron-Martin \cite{CM} for shift maps.
The Cameron-Martin theorem was then extended in two non-overlapping directions, by Girsanov \cite{girs} (for non-anticipative maps) and  by Ramer \cite{ramer} (anticipative maps). All these results have been achieved by a functional analytic approach, exploiting the properties of the generator of the transformation, which is required to be at least of the Hilbert-Schmidt class. Nowadays these results are well established and the lay at the very basis of the development of stochastic calculus. Further developments have been achieved by means of Malliavin calculus \cite{ABC1,ABC2, mal, ust}, essentially identifying more general classes of maps allowing quasi-invariance of Gaussian measures in functional spaces. 

The problem witnessed recently a resurgence of interest mainly concerning the evolution of the Brownian motion (or related processes) along the flow of dispersive PDEs \cite{sigma, OT1, OT2, OT3, OTT, PTV, GOTW, phil, forl, deb}. A new analytic approach was introduced for flows of dispersive nonlinear equations in \cite{sigma}. The argument (inspired by the previous works \cite{TV13a,TV13b,TV14,DTV}), exploits directly the properties of the flow of the PDE under consideration. However, this technique does not provide an explicit approximation of  the density of the infinite dimensional change of coordinates induced by the flow. 

Given this framework, the DNLS gauge transformations $\Ga_\a$ represent an interesting mathematical challenge, as they are {\it anticipative maps whose generator is not Hilbert-Schmidt}. Therefore none of the classical results \cite{ramer, girs,ABC1,ABC2} applies. Nonetheless we can successfully exploit the method of \cite{sigma} and we show in this paper that we can push it to deal with regularity up to $H^{r}(\T)$ for any $r>0$, corresponding to Gaussian measures $\g_s$ with $s>\frac12$. Our result extends
(and improves, getting rid of the small $L^2$ norm restriction) the earlier analysis of \cite{NR-BSS11,GLV2} valid for integer $s\geq1$.  It will become clear from our analysis that the restriction $s>\frac12$ is most likely optimal. In particular for lower values of $s$ the support of $\gamma_s$ is no longer on classical functions and the extension of our result to such values of $s$ would certainly require some renormalisation procedure. 

A special mention is deserved by the case $s=1$, addressed in \cite{NR-BSS11} by the following nice probabilistic argument. The typical trajectories of $\g_1$ are complex Brownian bridges, for which modulus and phase are conditionally independent after a time-change. Since the gauge acts in fact as a modulus-dependent phase-shift, conditionally on the modulus Cameron-Martin theorem applies. In this way the authors were able not only to prove quasi-invariance but also the precise change of variable formula via Cameron-Martin theorem. Unfortunately this trick is very much based on the specific properties of the complex Brownian bridge and seems to be difficult to reproduce for general Gaussian measures. We stress that the small mass restriction here emerges by the so-called Novikov condition, which amounts to require uniform integrability of the density of the change of variables. To ensure this property the authors rely on the analysis of \cite{TT10} of the Gibbs measure of the derivative NLS, as the leading order terms in the exponent of the density are the same. 

Even though we do not attack the problem directly, our work gives a strong indication that the change of variable formula established in \cite{NR-BSS11} does not require any condition on the mass. Otherwise the problem of determining the precise densities given by the gauge map is still open for $s\neq1$.  Let us point out that (with the notable exception of \cite{deb}) most of the works, appeared recently on the subject in the context of dispersive PDEs, cannot specify the Radon-Nykodim derivative by means of a suitable approximation procedure. 

The  low fractional regularity brings some  new challenges. The main problem is to find a good replacement to the classical integration by parts formula (or equivalently in our case Leibnitz formula) valid for fractional derivatives. Indeed the explicit representation of the variation of the Sobolev norm for integral regularity given in \cite[Lemma 2.9]{GLV2} was obtained by a direct exploit of the Leibnitz rule and does not easily generalise to fractional regularity. Therefore we have to base our analysis on a less transparent representation in terms of Fourier coefficients, which is not evidently of similar form. Indeed the gauge map can be written as identity plus smoothing only for high frequencies, but the low frequency contribution is hard to bound. Therefore we have to isolate the low frequency term and operate on it a fractional integration by parts in order to profit by a convenient cancellation given by the imaginary part as in the DNSL integrals of motion  (the same kind of difficulty is solved in \cite{phil} with similar methods). In fact, as already remarked in \cite{GLV2}, the terms appearing from the transformation along the flow of $\Ga_\a$ of the Sobolev norms are of the same type of the DNLS energies. Therefore, even though equation (\ref{eq:gauge-eq}) is much easier than DNLS, therefore the analytical difficulties are less challenging, from the probabilistic viewpoint to study the transformed Sobolev norm via $\{\Ga_\a\}_{\a\in\R}$ or the energies of DNLS is mostly equivalent. This constitutes a crucial point of this work as, albeit the flow is very regular, the Sobolev norms are hard to bound within the support of the Gaussian measure $\g_s$. 

The paper is organised as follows. In Section \ref{sect:finite} a suitable approximation of the gauge map is presented and some first properties are stated. The most important being that this approximations behave well with the finite dimensional Lebesgue measures in the frequency space (see Section \ref{sect:J}) and that the approximating gauge flow and the true gauge flow are asymptotically close. In Section~ \ref{sect:proof} we present the main argument of the proof, which is essentially the adaptation from \cite{sigma} to our case of study, leaving most of the specifics to the subsequent sections: in Section \ref{sect:J} we study the behaviour of the Jacobian determinant; in Section \ref{sect:L2} and Section \ref{sect:prop} we show that the derivative of the Sobolev norm along the flow computed in zero is a sub-exponential random variable w.r.t. $\g_s$ restricted to a ball of $L^2$. This is the most technical part of the paper. First in Section~ \ref{sect:L2} we show the quantity of interest to converge in $L^2(\g_s)$ and then in Section \ref{sect:prop} we employ the argument of \cite{B94} to show sub-exponential behaviour. 
In both sections we need to separate small and high frequencies as explained before. The splitting differs slightly in the two sections, but is similar in spirit.

\subsection*{Notations}
Throughout we denote by $u(n)$ the $n$-th Fourier coefficient of $u:\T \to \C$. 
$E_s[\cdot]$ the expectation value w.r.t. $\g_s$.
We define the fractional derivative of order $s$ as
\begin{equation}\label{FracDer}
u^{(s)} (n) = |n|^s u(n)\,.
\end{equation}
We use the following definition for the fractional Sobolev seminorm 
$$
\| u  \|_{\dot H^s}^2 = \sum_{n \in \mathbb{Z}, n \neq 0} |n|^{2s}  |u(n)|^2 
$$
and we define the fractional Sobolev norm as
\begin{equation}\label{HomogeneousSobolevNorm}
 \| u \|_{H^s}^2  =  \| u \|_{L^2}^2 + \|u \|_{\dot{H}^{s}}^2.
\end{equation}
We have by Plancherel
\begin{equation}\label{HsInSpace}
\| u \|_{\dot{H}^s}^2 = \int | u^{(s)} |^2.
\end{equation}
We also have $\| \cdot \|_{H^0(\T)} \simeq \| \cdot \|_{L^2(\T)}$. 
Given $R>0$, we denote with $B(R)$ the ball of center zero and radius~$R$ in the $L^2(\T)$-topology.
We set $$\tilde\gamma_s(A):=E_s[1_{B(R)\cap A}].$$
Let $P_N$ be the the projection on the first $N$ Fourier modes 
\begin{equation}\label{FirstNFourier}
P_{N} \sum_{n \in \mathbb{Z}} e^{inx} c_n := \sum_{|n| \leq N} e^{inx} c_n  .
\end{equation}
We define $$\tilde\gamma_{s,N}(A):=E_s[1_{\{\|P_Nu\|_{L^2}\leq R\}\cap A}]$$ for any measurable $A$. Note that $R$ is always implicit in the definition of $\tilde\gamma_s$ and $\tilde\gamma_{s,N}$. 
For $j\in\N$ the Littlewood-Paley projector is denoted by $\D_j:=P_{2^{j}}-P_{2^{j-1}}$; we write $|n|\simeq2^{j}$ to shorten $2^{j-1}< |n|\leq 2^j$ for $j\in\N$, while for $j=0$ $|n|\simeq 1$ shortens $|n|\leq 1$.
We write $X \lesssim Y$ to denote that $X \leq C Y$ for some positive constant $C$ independent on $X,Y$.

We denote by $E_N^{\perp}$ the orthogonal complement of $E_N$ in the topology of $L^2(\T)$. 
Letting $\gamma_{s}^{\perp, N}$ the measure induced on $E_N^{\perp}$ by the map
\begin{equation}\label{Def:gammaK2}
\omega\longmapsto \sum_{|n| >N } \frac{g_n(\omega)}{(1+|n|^{2s})^{\frac{1}{2}}}\, e^{inx} \, ,
\end{equation}
the measure $\gamma_s$ factorises over $E_N \times E_N^{\perp}$ as
\begin{equation}\label{GammaPerp}
\gamma_s(du) := \frac{1}{Z_N} e^{-\frac12  \| P_N u \|_{H^s}^2 } L_N (d P_N u) \, \gamma_{s}^{\perp, N}(d P_{>N} u),
\end{equation}
where $L_N$ is the Lebesgue measure induced on $E_N$ by the isomorphism between~$\R^{2(2N+1)}$ and $E_N$ and 
$Z_N$ is a renormalisation factor.

We will use the Bernstein inequality for estimating tail probabilities in the following form.
Let $X_1,\ldots,X_N$ i.i.d. sub-exponential component of $X\in\R^N$ and $a\in\R^N$. Then there are $c,C>0$ such that
\be\label{eq_Bernstein}
P\left(|\sum_{i=1}^N a_iX_i|\geq t\right)\leq \exp\left(-c\min\left(\frac{t}{\max_i |a_i|},\frac{t^2}{\|a\|_2^2}\,\right)\right)\,.
\ee

%
%
%
\section{Approximated flow}\label{sect:finite}

Let $P_N$ be the the projection on the first $N$ Fourier modes (see \eqref{FirstNFourier}). 
Given $N \in \mathbb{N}$ we define
an approximation $\Ga_\a^N : L^2(\T) \to L^2(\T)$ of $\Ga_\a$ using the 
truncated system (compare with \eqref{eq:gauge-eq})
\be\label{eq:gaugeN-cauchy}
\frac{d}{d\a}\Ga_\a^N (u)=iP_N\big(\mathcal I[P_N \Ga_{\a}^N(u)] P_N \Ga_{\a}^N(u)\big)\,,\quad \Ga^N_0 (u) = u \,.
\ee  
By convention $P_{\infty}=\Id$. 
It is immediate to show that, for all $N \in \N$ the flow map is globally (in time) well defined, since the frequencies $>N$ evolves under the identity map and the 
frequencies~$\leq N$ evolves as the solution of an 
ODE with conserved $L^2$ norm (see Lemma \ref{L2NormGauge}). In the case~$N = \infty$ this is a consequence of the explicit representation formula \eqref{gauge-change}.
An immediate consequence of \eqref{L2Conservation} is that
$$
\| \Ga_\a^N u \|_{L^{2}} = \| u \|_{L^{2}}, \qquad \forall \a \in \R,
$$
note, however, that $|u|\neq|\Ga_\a^N u |$, which is only the case when $N = \infty$ (see again \eqref{gauge-change}).

It is also clear, looking at the definition \eqref{eq:gaugeN-cauchy}, that the 
map $\alpha \to \Ga_\a^N$ is a one parameter group of transformations, for all $N \in \mathbb{N} \cup \{ \infty \}$. 

\begin{lemma}\label{L2NormGauge}
Let $N \in \N \cup \{ \infty \}$. For all $u \in L^{2}(\T)$ we have
\begin{equation}\label{L2Conservation}
\| P_N \Ga_\a^N u \|_{L^{2}} = \| P_N u \|_{L^2} 
\end{equation}
\end{lemma}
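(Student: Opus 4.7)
The plan is to differentiate $\|P_N\Ga_\a^N u\|_{L^2}^2$ in $\a$ and show that it vanishes identically, which immediately yields the conservation statement.

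First, set $v_\a := \Ga_\a^N u$ and $w_\a := P_N v_\a$. Applying the projector $P_N$ (which is idempotent) to both sides of the defining ODE \eqref{eq:gaugeN-cauchy}, I would observe that
\begin{equation*}
\frac{d}{d\a} w_\a = P_N \frac{d v_\a}{d\a} = P_N\bigl(\mathcal{I}[w_\a]\, w_\a\bigr) \cdot i,
\end{equation*}
since $P_N^2 = P_N$. Therefore $w_\a$ itself solves a self-contained ODE inside the finite-dimensional subspace $E_N$, and I just need to track its $L^2$ norm.

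Next I would compute
\begin{equation*}
\frac{d}{d\a}\|w_\a\|_{L^2}^2 = 2\,\Re\,\bigl\langle w_\a,\; i\,P_N(\mathcal{I}[w_\a]\, w_\a)\bigr\rangle_{L^2}.
\end{equation*}
Because $w_\a \in E_N$, the projector $P_N$ can be dropped from the second slot of the inner product (it is self-adjoint and acts as identity on the first slot), so the right-hand side equals
\begin{equation*}
2\,\Re\left( i\int_\T \mathcal{I}[w_\a](x)\,|w_\a(x)|^2\,dx\right).
\end{equation*}
The key observation is that $\mathcal{I}[w_\a]$, being the periodic zero-mean primitive of the real function $|w_\a|^2 - \|w_\a\|_{L^2}^2/(2\pi)$, is itself real-valued. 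Hence the integrand above is real, the factor $i$ makes the whole integral purely imaginary, and its real part vanishes. This gives $\tfrac{d}{d\a}\|w_\a\|_{L^2}^2 = 0$, and then $\|P_N\Ga_\a^N u\|_{L^2} = \|P_N\Ga_0^N u\|_{L^2} = \|P_N u\|_{L^2}$, as desired.

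For the convention $N = \infty$, the explicit formula \eqref{gauge-change} gives $|\Ga_\a u(x)| = |u(x)|$ pointwise, because $\mathcal{I}[u(x)] \in \R$, and the claim follows trivially by integration. There is no real obstacle here: the only point one needs to be careful about is that the ODE for $w_\a = P_N v_\a$ closes inside $E_N$, so that the cancellation coming from reality of $\mathcal{I}$ does not get contaminated by high-frequency terms.
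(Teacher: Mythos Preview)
Your proof is correct. The paper itself does not give a proof but simply refers to \cite[Section 6]{GLV2}; the argument there is essentially the one you wrote: differentiate $\|P_N\Ga_\a^N u\|_{L^2}^2$ in $\a$, use self-adjointness of $P_N$ to remove the projector from the pairing, and then observe that $\mathcal I[\,\cdot\,]$ is real-valued so that the resulting expression has vanishing real part.
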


For the proof of \eqref{L2Conservation} we refer to \cite[Section 6]{GLV2}.
Moreover, we have the following $L^2$-stability result.

\begin{lemma}\label{H1NormGauge}
Let $N \in \N \cup \{ \infty \}$. Then
\begin{equation}\label{eq:L^2ContinuityPreq}
\| \Ga_\a^N u -  \Ga_\alpha^N v \|_{L^2} \lesssim 
 e^{C|\alpha| \left( \|P_N u\|^2_{L^2} + \|P_N v\|^2_{L^2} \right) } \|  ( u - v) \|_{L^2}\,.
\end{equation}
\end{lemma}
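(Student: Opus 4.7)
The plan is to run a Grönwall argument on the difference $w_\a := \Ga_\a^N u - \Ga_\a^N v$, after separating the high- and low-frequency contributions.

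First, I would note that the right-hand side of \eqref{eq:gaugeN-cauchy} is supported in $\{|n|\leq N\}$, so $\frac{d}{d\a} P_{>N}\Ga_\a^N u = 0$ and therefore $P_{>N}\Ga_\a^N u = P_{>N}u$. This gives immediately $\|P_{>N}w_\a\|_{L^2} = \|P_{>N}(u-v)\|_{L^2}\leq \|u-v\|_{L^2}$. In the case $N=\infty$ this step is vacuous. It then remains to bound $\|P_N w_\a\|_{L^2}$. Setting $U:=P_N\Ga_\a^N u$ and $V:=P_N\Ga_\a^N v$, from \eqref{eq:gaugeN-cauchy} we obtain
\begin{equation*}
\frac{d}{d\a}(U-V) = iP_N\bigl(\mc I[U]\,(U-V)\bigr) + iP_N\bigl((\mc I[U]-\mc I[V])\,V\bigr).
\end{equation*}

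Next I would record two elementary $L^\infty$-bounds for $\mc I$ that follow directly from \eqref{DefMathcali} by the triangle inequality on the $y$-integral (the integrand has zero average so one can estimate the primitive in $L^1(\T)$):
\begin{equation*}
\|\mc I[f]\|_{L^\infty}\lesssim \|f\|_{L^2}^2,\qquad \|\mc I[U]-\mc I[V]\|_{L^\infty}\lesssim \bigl(\|U\|_{L^2}+\|V\|_{L^2}\bigr)\|U-V\|_{L^2},
\end{equation*}
the second one following from $\||U|^2-|V|^2\|_{L^1}\leq \|U-V\|_{L^2}(\|U\|_{L^2}+\|V\|_{L^2})$ and the analogous bound for $\bigl|\|U\|_{L^2}^2-\|V\|_{L^2}^2\bigr|$. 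Using $\|P_N\|_{L^2\to L^2}\leq 1$, Hölder, and Lemma \ref{L2NormGauge} (which gives $\|U\|_{L^2}=\|P_Nu\|_{L^2}$ and $\|V\|_{L^2}=\|P_Nv\|_{L^2}$ for all $\a$), the two terms above give
\begin{equation*}
\Bigl\|\tfrac{d}{d\a}(U-V)\Bigr\|_{L^2}\lesssim \bigl(\|P_N u\|_{L^2}^2+\|P_N v\|_{L^2}^2\bigr)\,\|U-V\|_{L^2}.
\end{equation*}

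From $\frac{d}{d\a}\|U-V\|_{L^2}^2 = 2\,\Re\langle U-V,\frac{d}{d\a}(U-V)\rangle$ one has $\frac{d}{d\a}\|U-V\|_{L^2}\leq \|\frac{d}{d\a}(U-V)\|_{L^2}$. Grönwall's inequality then yields
\begin{equation*}
\|U-V\|_{L^2}\leq e^{C|\a|\left(\|P_Nu\|_{L^2}^2+\|P_Nv\|_{L^2}^2\right)}\,\|P_N(u-v)\|_{L^2},
\end{equation*}
and combining with the trivial high-frequency bound above gives \eqref{eq:L^2ContinuityPreq}. The case $N=\infty$ is identical, running the estimate directly on \eqref{eq:gauge-eq} without any projector. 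There is no real obstacle in this argument: the only thing to verify carefully is the two $L^\infty$-estimates on $\mc I$, which is where the $L^2$-conservation of Lemma \ref{L2NormGauge} is essential in order to keep the exponent depending only on the initial data.
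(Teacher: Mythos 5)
Your proof is correct and follows essentially the same route as the paper: decompose into the $P_N$ part (which obeys a Grönwall estimate via the truncated ODE and the two $L^\infty$ bounds on $\mathcal I$) and the $P_{>N}$ part (on which $\Ga_\a^N$ is the identity). The only cosmetic difference is that the paper differentiates $\|\delta_\a^N\|_{L^2}^2$ by pairing the ODE with $\delta_\a^N$, whereas you bound $\|\frac{d}{d\a}(U-V)\|_{L^2}$ directly and then use $\frac{d}{d\a}\|U-V\|_{L^2}\leq\|\frac{d}{d\a}(U-V)\|_{L^2}$; both are routine and equivalent preludes to Grönwall, though it is cleaner to work with the squared norm as the paper does, since it avoids any fuss about differentiability of the norm at zero.
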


\begin{proof}
Until the end of the proof $N \in \N \cup \{ \infty \}$.
Decomposing 
\begin{align}\nonumber
\Ga_\a^N u -  \Ga_\alpha^N v 
& =
P_N \Ga_\a^N u -  P_N \Ga_\alpha^N v
+ (\Id - P_N)  ( \Ga_\a^N u ) - (\Id - P_N) ( \Ga_\a^N v )
\\ \nonumber
& = P_N \Ga_\a^N u -  P_N \Ga_\alpha^N v
+ (\Id - P_N)  (  u -  v ),
\end{align}
where the second identity follows by the fact that $\Ga_\a^N$ is the identity on the frequencies $> N$ (remember definition \eqref{eq:gaugeN-cauchy}),
the \eqref{eq:L^2ContinuityPreq} follows by
\begin{equation}\label{eq:L^2Continuity}
\| P_N \Ga_\a^N u - P_N \Ga_\alpha^N v \|_{L^2} \lesssim  e^{C|\alpha| \left( \|P_N u\|^2_{L^2} + \|P_N v\|^2_{L^2} \right) } \| P_N ( u - v) \|_{L^2}\,.
\end{equation}
To prove \eqref{eq:L^2Continuity} we will need the inequalities 
\begin{equation}\label{Triv1}
\| \mathcal{I}[P_N \Ga^N_\a u] \|_{L^{\infty}} \lesssim \| P_N  \Ga^N_\a u \|^2_{L^{2}} = \| P_N u\|_{L^2}^2  \,,
\end{equation}
and
\begin{align}\label{Triv2Preq}
 \| \mathcal{I}[P_N \Ga^N_\a u] - \mathcal{I}[P_N \Ga^N_\a v]\|_{L^{\infty}}
& 
\lesssim \| P_N \Ga^N_\a u  + P_N \Ga^N_\a v \|_{L^2} \| P_N \Ga^N_\a u  - P_N  \Ga^N_\a v \|_{L^2} 
\\ \nonumber
& \lesssim \left( \| P_N u \|_{L^2} + \| P_N v \|_{L^2}  \right)  \| P_N \Ga^N_\a u  - P_N  \Ga^N_\a v \|_{L^2} \, .
\end{align}
These follow immediately recalling the form \eqref{DefMathcali} of $\mathcal{I}[\cdot]$ and \eqref{L2Conservation}.
Let
$$
\delta_{\alpha}^N(u,v) :=  P_N \Ga^N_\a u - P_N \Ga^N_\a v \,.
$$
Notice that $\delta^N_\a (u,v)$ solves
\begin{equation}\nn
\frac{d}{d \a} \delta^N_\a (u,v)
= 
 i  P_N \big( \mathcal{I}[ P_N \Ga^N_\a u]\delta^N_\a (u,v) + \left( \mathcal{I} [P_N \Ga^N_\a u] - \mathcal{I}[P_N \Ga^N_\a v] \right) P_N \Ga^N_\a v  \big) \, .
\end{equation}
Pairing this in $L^2$ with $\delta^N_\a (u,v)$ we get
\begin{equation}\nonumber
\frac{d}{d \a}
\| \delta^N_\a (u,v) \|_{L^2}^2 
= 2 \Re i \Big(  
 \int   \mathcal{I}[P_N \Ga^N_\a u] |\delta^N_\a (u,v)|^2  
+ 
\int \left(  \mathcal{I}[P_N \Ga^N_\a u] - \mathcal{I}[P_N \Ga^N_\a v] \right) (P_N \Ga^N_\a v) \,  \overline{\delta^N_\a (u,v)} \Big)   \, .
\end{equation}
Using the H\"older and Cauchy--Schwartz inequalities and \eqref{Triv1}, \eqref{Triv2Preq}, we arrive to
\begin{align}\nonumber
\frac{d}{d \a} 	\| \delta^N_\a (u,v) \|_{L^2}^2 
& 
\leq  \| \mathcal{I}[P_N \Ga^N_\a u] \|_{L^{\infty}} \| \delta^N_\a (u,v) \|^2_{L^2} 
\\ \nn
& 
+ 
\| \mathcal{I}[P_N \Ga^N_\a u] - \mathcal{I}[P_N \Ga^N_\a v] \|_{L^{\infty}}  \| P_N \Ga^N_\a v \|_{L^2} \| \delta^N_\a (u,v) \|_{L^2} 
\\ \nn
& 
\lesssim 
 (\| P_N u \|_{L^2}^2 +  \| P_N v \|_{L^2}^2  )  \| \delta^N_\a (u,v) \|^2_{L^2}    \, ,
\end{align}
so that \eqref{eq:L^2Continuity} follows by Gr\"onwall's lemma. 

\end{proof}

The flow $\Ga^N_\a$ approximates $\Ga^\infty_\a = \Ga_\a$ for large $N$ in the $L^2(\T)$ topology, uniformly on compact sets. This is proved in Lemma \ref{HsDecay}. 
Before we need the following statement.

\begin{lemma}\label{L^2Continuity}
The map
$$
(\alpha, u) \in \R \times L^{2}(\T) \to  \Ga_\a u \in L^2(\T)
$$
is continuous.
\end{lemma}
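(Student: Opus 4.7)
My plan is to prove the joint continuity by a triangle inequality split,
\[
\|\Ga_\a u - \Ga_\b v\|_{L^2} \leq \|\Ga_\a u - \Ga_\a v\|_{L^2} + \|\Ga_\a v - \Ga_\b v\|_{L^2},
\]
so as to handle separately the dependence on the function variable $u$ and on the group parameter $\a$.

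For the first summand I would simply invoke Lemma \ref{H1NormGauge} taken with $N=\infty$ (so that $P_N=\Id$), which yields the bound
\[
\|\Ga_\a u - \Ga_\a v\|_{L^2} \lesssim e^{C|\a|(\|u\|_{L^2}^2 + \|v\|_{L^2}^2)}\|u-v\|_{L^2}.
\]
Restricting $(\b,v)$ to a bounded neighbourhood of $(\a,u)$ in $\R\times L^2(\T)$, the exponential prefactor stays uniformly controlled, so this piece vanishes as $v\to u$ in $L^2$.

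For the second summand I would avoid the ODE \eqref{eq:gauge-eq} and appeal directly to the closed-form representation \eqref{gauge-change}. Since $\mathcal I[v(x)]$ is real-valued, at fixed $v$ we have
\[
\Ga_\a v(x) - \Ga_\b v(x) = \bigl(e^{i\a\mathcal I[v(x)]} - e^{i\b\mathcal I[v(x)]}\bigr)v(x).
\]
Using the elementary pointwise inequality $|e^{ix}-e^{iy}|\leq |x-y|$ and the bound $\|\mathcal I[v]\|_{L^\infty}\lesssim \|v\|_{L^2}^2$ from \eqref{Triv1} (taken with $N=\infty$ and $\a=0$), I would obtain
\[
\|\Ga_\a v - \Ga_\b v\|_{L^2} \leq |\a-\b|\,\|\mathcal I[v]\|_{L^\infty}\|v\|_{L^2} \lesssim |\a-\b|\,\|v\|_{L^2}^3,
\]
which tends to $0$ as $\b\to\a$.

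Combining the two bounds gives the joint continuity of $(\a,u)\mapsto \Ga_\a u$ from $\R\times L^2(\T)$ to $L^2(\T)$. I do not expect any real obstacle: the non-trivial $L^2$-Lipschitz dependence on $u$ is already packaged in Lemma \ref{H1NormGauge}, while the $\a$-continuity is an immediate consequence of the explicit formula for $\Ga_\a v$, bypassing the need to reopen the flow equation.
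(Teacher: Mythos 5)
Your proof is correct, and the overall strategy coincides with the paper's: the same triangle-inequality split, and the same invocation of Lemma~\ref{H1NormGauge} (with $N=\infty$) to control the $u$-dependence on bounded sets. Where you differ is in the estimate $\|\Ga_\a v - \Ga_\b v\|_{L^2}\lesssim |\a-\b|\,\|v\|_{L^2}^3$: the paper obtains it by integrating the generating ODE \eqref{eq:gauge-eq} over $[\b,\a]$ and applying Minkowski and H\"older, whereas you read it off the explicit exponential formula \eqref{gauge-change} via the elementary Lipschitz bound $|e^{ix}-e^{iy}|\leq|x-y|$ for real $x,y$, combined with $\|\mathcal I[v]\|_{L^\infty}\lesssim\|v\|_{L^2}^2$. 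This is a legitimate shortcut and arguably cleaner, since it avoids reopening the flow equation; the two arguments land on the identical estimate. One small caveat to keep in mind: the closed-form representation \eqref{gauge-change} is only available for the exact map $\Ga_\a=\Ga_\a^\infty$ and not for the truncated flows $\Ga_\a^N$ with $N<\infty$, which is presumably why the paper opts for the ODE route that works uniformly in $N$; since the lemma concerns $N=\infty$ only, your choice is perfectly fine here.
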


\begin{proof}
Decomposing 
$$
\| \Ga_\a u - \Ga_\beta v \|
\leq \| \Ga_\a u - \Ga_\alpha v \| + 
\| \Ga_\a v - \Ga_\beta v \|
$$
The statement easily follows by the estimate \eqref{eq:L^2Continuity} and
$$
\| \Ga_\a v - \Ga_\beta v \|_{L^2} \lesssim  |\alpha - \beta| \| v \|^3_{L^2}. 
$$
To prove this we assume $\beta < \alpha$ and we integrate \eqref{eq:gaugeN-cauchy} over $[\beta, \alpha]$, so that 
$$
\Ga_\a v - \Ga_\beta v = i  \int_{\beta}^{\alpha} \mathcal I[\Ga_{\a'}(v)]\Ga_{\a'}(v) d \alpha'.
$$
Taking the $L^{2}$ norm of this identity and using Minkowsky's and H\"older inequalities and \eqref{Triv1}, \eqref{L2Conservation} we arrive to 
\begin{align}\nonumber
\| \Ga_\a v - \Ga_\beta v \|_{L^{2}} 
& \leq \int_{\beta}^{\alpha} \| \big(\mathcal I[\Ga_{\a'}(v)]\Ga_{\a'}(v)\big) \|_{L^{2}} d \alpha'
\\ \nonumber
&
\leq \int_{\beta}^{\alpha} \| \mathcal I[\Ga_{\a'}(v)] \|_{L^{\infty}}  \| \Ga_{\a'}(v) \|_{L^{2}}  d \alpha' 
\leq \int_{\beta}^{\alpha} \|  v \|^3_{L^{2}}    d \alpha'  = |\alpha - \beta| \| v \|^3_{L^2},
\end{align}
as claimed.
\end{proof}

\begin{lemma}\label{HsDecay}
Let $N \in \N $ and $\bar \alpha \geq 0$.  
Let $A$ be a compact subset of $L^{2}(\T)$. Then 
\begin{equation}\label{StabL2Easy}
\lim_{N \to \infty}  \sup_{u \in  A, \, |\a| \leq \bar \a } \|  \Ga_\a u - \Ga_\a^N u \|_{L^2} = 0 \, . 
\end{equation}
\end{lemma}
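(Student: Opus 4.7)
The plan is to decompose $\Ga_\a u - \Ga_\a^N u$ into a low- and a high-frequency piece and estimate each separately. Since $\Ga_\a^N$ leaves the frequencies $>N$ unchanged, $(\Id-P_N)\Ga_\a^N u=(\Id-P_N)u$, and so
$$\Ga_\a u - \Ga_\a^N u = P_N\bigl(\Ga_\a u - \Ga_\a^N u\bigr) + (\Id-P_N)\bigl(\Ga_\a u - u\bigr).$$
The high-frequency piece I would handle by a soft compactness argument: by Lemma \ref{L^2Continuity} the map $(u,\a)\mapsto \Ga_\a u - u$ is continuous from the compact set $A\times[-\bar\a,\bar\a]$ into $L^{2}(\T)$, hence has relatively compact image, and on any compact subset of $L^{2}$ the spectral cutoffs $(\Id-P_N)$ converge to zero uniformly by a standard $\e/2$-net argument. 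This already yields $\sup_{u\in A,\,|\a|\leq\bar\a}\|(\Id-P_N)(\Ga_\a u - u)\|_{L^2}\to 0$ as $N\to\infty$.

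For the low-frequency piece I set $h_N(\a):=P_N(\Ga_\a u - \Ga_\a^N u)$, so that $h_N(0)=0$, and abbreviate $v:=\Ga_\a u$, $v_N:=\Ga_\a^N u$. Subtracting the evolution equations \eqref{eq:gauge-eq} and \eqref{eq:gaugeN-cauchy} and using $v - P_N v_N = (\Id-P_N)v + h_N$, I find that $\partial_\a h_N$ equals the sum
$$i P_N\bigl(\mc I[P_N v_N]\,h_N\bigr) + i P_N\bigl([\mc I[v]-\mc I[P_N v_N]]\,v\bigr) + i P_N\bigl(\mc I[P_N v_N](\Id-P_N)v\bigr).$$
The decisive observation is that the first summand contributes zero to $\tfrac{d}{d\a}\|h_N\|_{L^2}^2$: since $h_N\in E_N$ the outer $P_N$ can be dropped upon pairing with $h_N$, and as $\mc I[P_N v_N]$ is real-valued, $\mathrm{Re}\langle h_N,\,i\mc I[P_N v_N]h_N\rangle=0$. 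The remaining two terms I bound by Cauchy--Schwarz together with the pointwise estimates \eqref{Triv1}--\eqref{Triv2Preq} and $\|P_Nv_N\|_{L^2}=\|P_N u\|_{L^2}\leq\|u\|_{L^2}$ from Lemma \ref{L2NormGauge}, which, with $C$ depending only on $\|u\|_{L^2}$, gives
$$\tfrac{d}{d\a}\|h_N(\a)\|_{L^2}^2 \leq C\Bigl(\|h_N(\a)\|_{L^2}^2 + \|(\Id-P_N)\Ga_\a u\|_{L^2}^2\Bigr).$$
Gr\"onwall's lemma then bounds $\|h_N(\a)\|_{L^2}^2$ by a constant depending on $\|u\|_{L^2}$ and $\bar\a$ times $\sup_{|\a'|\leq\bar\a}\|(\Id-P_N)\Ga_{\a'}u\|_{L^2}^2$, and applying the same compactness argument to the image of $(u,\a)\mapsto \Ga_\a u$ shows that this supremum vanishes uniformly in $u\in A$ and $|\a|\leq\bar\a$.

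The main obstacle is precisely this low-frequency Gr\"onwall step: the Lipschitz estimate of Lemma \ref{H1NormGauge} is diagonal in $N$ and hence not directly applicable, because here the error source is the mismatch between two \emph{different} flows rather than between two initial data evolved by the same flow. Extracting the cancellation that kills the principal phase-rotation term $i\mc I[P_N v_N]h_N$, and recognising that what remains is driven solely by the high-frequency content of the true flow $\Ga_\a u$ (which is itself controlled by the soft compactness argument), is the heart of the proof.
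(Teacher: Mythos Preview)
Your proof is correct and follows essentially the same approach as the paper: the same high/low-frequency decomposition, the same Gr\"onwall argument for $h_N=P_N(\Ga_\a u-\Ga_\a^N u)$ driven by the high-frequency remainder $\|(\Id-P_N)\Ga_\a u\|_{L^2}$, and the same appeal to Lemma~\ref{L^2Continuity} for uniformity. Your two variations---noting that the principal term $i\mc I[P_N v_N]h_N$ actually contributes zero upon pairing (the paper bounds it instead), and replacing the paper's Dini-theorem argument by the more direct observation that $(\Id-P_N)\to 0$ uniformly on the compact image of $(u,\a)\mapsto\Ga_\a u$---are cosmetic improvements rather than a different route.
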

\begin{proof}
We decompose 
\begin{align}\nonumber
\Ga_\a u -  \Ga_\alpha^N u 
& =
P_N \Ga_\a u -  P_N \Ga_\alpha^N u
+ (\Id - P_N) (  \Ga_\a u  -   \Ga_\a^N u  )
\\ \nonumber
& = P_N \Ga_\a u -  P_N \Ga_\alpha^N u
+ (\Id - P_N)  ( \Ga_\a u  - u ),
\end{align}
where the second identity follows by the fact that $\Ga_\a^N$ is the identity on the frequencies $> N$ (remember definition \eqref{eq:gaugeN-cauchy}).

Since $A$ is compact, it is in particular bounded, so that $\| u \|_{L^2} \leq R$ for some $R > 1$, and by \eqref{L2Conservation} also $\| \Ga_\a u \|_{L^2} \leq R$.
Thus we have 
\begin{equation}\label{UT}
\lim_{N \to \infty}  \sup_{u \in  A, \, |\a| \leq \bar \a } \|  (\Id - P_N) \Ga_\a u \|_{L^2} + \| (\Id - P_N) u \|_{L^2} = 0 \, ,
\end{equation}
so that the \eqref{StabL2Easy} follows by
\begin{equation}\label{StabL2EasyBis}
\lim_{N \to \infty}  \sup_{u \in  A, \, |\a| \leq \bar \a } \|  P_N \Ga_\a u - P_N \Ga_\a^N u \|_{L^2} = 0 \, .
\end{equation}
To prove \eqref{StabL2EasyBis}
we will need the inequalities \eqref{Triv1}
and
\begin{equation}\label{Triv2}
\| \mathcal{I}[\Ga_\a u] - \mathcal{I}[P_N \Ga^N_\a u]\|^2_{L^{\infty}}
\lesssim R^2 \| P_N \Ga_\a u  - P_N \Ga^N_\a u \|_{L^2}^2 + R^2 \| (\Id - P_N) \Ga_\a u\|_{L^2}^2 \, ,
\end{equation}
valid for $N \in \N \cup \{ \infty \}$,
which follows by
 \begin{align*}
\| \mathcal{I}[\Ga_\a u] - \mathcal{I}[P_N \Ga^N_\a u]\|_{L^{\infty}} & 
 \lesssim \| \Ga_\a u  + P_N \Ga^N_\a u \|_{L^2} \| \Ga_\a u  - P_N \Ga^N_\a u \|_{L^2} 
\\ \nonumber
&
 \lesssim R \|  \Ga_\a u  - P_N \Ga^N_\a u \|_{L^2} \, ,
\end{align*}
which easily follows by the definition \eqref{DefMathcali} of $\mathcal{I}[\cdot]$ and \eqref{L2Conservation}.
Let
$$
\delta^N_\a u := P_N \Ga_\a u - P_N \Ga^N_\a u \,.
$$
Notice that $\delta^N_\a u$ solves
\begin{equation}\nn
\frac{d}{d \a} \delta^N_\a u 
= 
 i  P_N \big( \mathcal{I}[\Ga_\a u]\delta^N_\a u + \left( \mathcal{I} [\Ga_\a u] - \mathcal{I}[P_N \Ga^N_\a u] \right) P_N \Ga^N_\a u  \big) \, .
\end{equation}
Pairing this in $L^2$ with $\delta^N_\a u$ we get
\begin{equation*}
\frac{d}{d \a}
\| \delta^N_\a u\|_{L^2}^2 
= 
2 \Re i \Big(
 \int   \mathcal{I}[\Ga_\a u] |\delta^N_\a u|^2  
+ 
\int \left(  \mathcal{I}[\Ga_\a u] - \mathcal{I}[P_N \Ga^N_\a u] \right) P_N (\Ga^N_\a u)   \, \overline{\delta^N_\a u} \Big)   \, .
\end{equation*}
Using the H\"older and Cauchy--Schwartz inequalities and \eqref{Triv1}, \eqref{Triv2}, we arrive to
\begin{align}\label{StabGAlpha}
\frac{d}{d \a} 	\| \delta^N_\a u \|_{L^2}^2 
& 
\lesssim
  \| \mathcal{I}[\Ga_\a u] \|_{L^{\infty}} \| \delta^N_\a u \|^2_{L^2} 
+ 
\| \mathcal{I}[\Ga_\a u] - \mathcal{I}[P_N \Ga^N_\a u] \|_{L^{\infty}}  \| P_N \Ga^N_\a u \|_{L^2} \| \delta^N_\a u \|_{L^2} 
\\ \nn
& 
\lesssim 
 R^2 \| (\Id - P_N) \Ga_\a u  ) \|^2_{L^2} + R^2  \| \delta^N_\a u \|^2_{L^2}    \, .
\end{align}
Thus, using the fact that $\delta^N_\a u |_{\a = 0} = 0$, the Gr\"onwall's inequality gives
\begin{align}\label{CombinedWith}
  \| \delta^N_\a u \|^2_{L^2}  
  & \lesssim 
R^2 e^{R^2 |\bar \alpha|}
\int_{0}^{\alpha} 
 \| (\Id - P_N) \Ga_{\a'} u  ) \|^2_{L^2}
 \, d \alpha'
, 
\quad |\alpha| \leq |\bar{\alpha}| \, . 
\end{align}
Recalling \eqref{UT}, \eqref{L2Conservation} and using dominated convergence, it is clear that the right hand side of \eqref{CombinedWith} goes to 
zero as $N \to \infty$. On the other hand, using 
Lemma~\ref{L^2Continuity}, it is clear that the maps  
$$
(\alpha, u) \in [- \bar{\alpha}, \bar{\alpha}] \times A \to 
\Xi^{N}(\alpha, u) :=
\int_{0}^{\alpha} R^2 \| (\Id - P_N) \Ga_{\a'} u  ) \|^2_{L^2}
 \, d \alpha'
$$
are continuous, for all $N \in \mathbb{N}$.
Since $\Xi^{N}$ are defined on a compact set, are monotonic (w.r.t. $N$) and they vanish in the limit $N \to \infty$ (as we have just proved), 
by Dini's theorem we have that they converge to zero uniformly. Recalling \eqref{CombinedWith}, this complete the proof.
\end{proof}

The next result is a direct corollary of Lemma \ref{HsDecay}.  
\begin{corollary}\label{cor:StabCorImp}
Let $\e > 0$ and $\bar \alpha \geq 0$. For all compact subset $A$ of $L^2(\T)$, there exists~$N^*$ such that 
\begin{equation}\nonumber
\Ga_\a (A) \subset   \Ga^N_\a (A + B(\e))  \, . 
\end{equation}
for all $|\a| \leq \bar \a$ and for all $N > N^*$.
\end{corollary}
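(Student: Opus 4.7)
The plan is to show that any $w \in \Gamma_\a(A)$ can be written as $w = \Gamma^N_\a(v)$ with $v$ close to some element of $A$ in $L^2$. Since $\Gamma^N_\a$ is a one-parameter group (hence invertible with inverse $\Gamma^N_{-\a}$), for $w = \Gamma_\a(u)$ with $u \in A$ the only candidate is
\[
v := \Gamma^N_{-\a}\bigl(\Gamma_\a(u)\bigr) = \Gamma^N_{-\a}\bigl(\Gamma_\a(u)\bigr) - \Gamma^N_{-\a}\bigl(\Gamma^N_\a(u)\bigr) + u,
\]
where we used $\Gamma^N_{-\a}\circ \Gamma^N_\a = \Id$. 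Thus $v \in A + B(\e)$ will follow as soon as
\[
\bigl\| \Gamma^N_{-\a}(\Gamma_\a(u)) - \Gamma^N_{-\a}(\Gamma^N_\a(u)) \bigr\|_{L^2} \leq \e
\]
uniformly for $u\in A$ and $|\a|\leq\bar\a$.

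To estimate this difference I would apply the $L^2$-stability Lemma \ref{H1NormGauge} to $\Gamma^N_{-\a}$ at the two points $\Gamma_\a(u)$ and $\Gamma^N_\a(u)$, which yields
\[
\bigl\| \Gamma^N_{-\a}(\Gamma_\a(u)) - \Gamma^N_{-\a}(\Gamma^N_\a(u)) \bigr\|_{L^2} \lesssim e^{C|\a|\bigl(\|P_N\Gamma_\a u\|_{L^2}^2+\|P_N\Gamma^N_\a u\|_{L^2}^2\bigr)}\bigl\|\Gamma_\a(u) - \Gamma^N_\a(u)\bigr\|_{L^2}.
\]
Since $A$ is compact it is bounded, say $\|u\|_{L^2}\leq R$ for all $u\in A$, and then Lemma \ref{L2NormGauge} together with the conservation of the $L^2$ norm for $\Gamma_\a$ gives $\|P_N\Gamma^N_\a u\|_{L^2}\leq R$ and $\|P_N\Gamma_\a u\|_{L^2}\leq R$. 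Hence the exponential prefactor is uniformly bounded by $e^{2CR^2\bar\a}$ for $u\in A$ and $|\a|\leq\bar\a$.

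The remaining factor $\|\Gamma_\a u - \Gamma^N_\a u\|_{L^2}$ vanishes uniformly on $A\times[-\bar\a,\bar\a]$ as $N\to\infty$ by Lemma \ref{HsDecay}. Therefore I can choose $N^*$ so large that for all $N>N^*$ the product is below $\e$ uniformly; this $N^*$ witnesses the desired inclusion. The only genuinely delicate input is the uniform convergence provided by Lemma \ref{HsDecay} (which itself used Dini's theorem on the compact $A$); everything else is a direct bookkeeping argument combining the group property of $\Gamma^N_\a$, the $L^2$-Lipschitz bound of Lemma \ref{H1NormGauge}, and the $L^2$ conservation of Lemma \ref{L2NormGauge}.
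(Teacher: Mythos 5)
Your proof is correct and takes essentially the same route as the paper: pick $v = \Gamma^N_{-\a}\Gamma_\a u$, observe $\Gamma^N_\a v = \Gamma_\a u$, apply the $L^2$-Lipschitz bound of Lemma~\ref{H1NormGauge} to $\Gamma^N_{-\a}$ at the points $\Gamma_\a u$ and $\Gamma^N_\a u$, control the exponential prefactor by $L^2$ conservation and the boundedness of $A$, and conclude with the uniform convergence of Lemma~\ref{HsDecay}. (Your version of the exponent is in fact slightly more faithful to the statement of Lemma~\ref{H1NormGauge} than the display in the paper's proof, which drops a $P_N$ and the squares, but these are only typographical.)
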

\begin{proof}
Let $u \in A$ and $u^N := \Ga^N_{-\a}  \Ga_{\a} u$. Since 
\begin{equation}\label{CylHp}
\Ga^N_{\a} u^N = \Ga_{\a} u,
\end{equation} 
it suffices to prove
\begin{equation}\label{a_N}
\| u -  u^N  \|_{L^2} \leq \varepsilon,
\end{equation}
for all sufficiently large $N$, uniformly in $u\in A$, $|\a| \leq \bar \a$. 
Indeed, if \eqref{a_N} holds, it means that for all $u \in A$ we have found 
$$u^N \in u + B(\varepsilon) \subseteq A + B(\varepsilon)$$ 
such that (see \eqref{CylHp})
$$
\Ga_{\a} u =  \Ga^N_{\a} u^N  \in  \Ga^N_{\a} ( A + B(\varepsilon) ).
$$   
To prove \eqref{a_N} we notice that, since $A$ is a compact subset of $L^2$ we have $\| u\|_{L^2} \leq C_A$ for all~$u \in A$.
Thus
\begin{align}\nonumber
\lim_N\sup_{u\in A, |\a| \leq \bar \a} \| u -  u^N  \|_{L^2} & = \lim_N \sup_{u\in A, |\a| \leq \bar \a}\| \Ga^N_{-\a}   \Ga^N_{\a} u -  \Ga^N_{-\a}    \Ga_{\a} u ) \|_{L^2}
\\ \nonumber
& \lesssim \lim_N \sup_{u\in A} e^{C\bar \a \left(   \| \Ga^N_{\a} u\|_{L^2} + \| P_N \Ga_{\a} u \|_{L^2} \right)} \| \Ga^N_{\a} u -    \Ga_{\a} u  \|_{L^2}
\\ \nonumber
& \leq  e^{2 C C_A^2 \bar \a } \lim_N \sup_{u\in A}\| \Ga^N_{\a} u -    \Ga_{\a} u  \|_{L^2} =0 \, .
\end{align}
where we used \eqref{eq:L^2ContinuityPreq} in the first inequality, \eqref{L2Conservation} in the second inequality and \eqref{StabL2Easy} to take the limit over $N$.
This completes the proof of Corollary~\ref{cor:StabCorImp}.
\end{proof}


\section{Proof of Theorem \ref{th:gauge}}\label{sect:proof}

Here we give the main argument to prove Theorem \ref{th:gauge}, leaving all the (many) auxiliary statements to the next sections. We follow the strategy introduced in \cite{sigma}. 

First we define the measure
\begin{equation}\label{DefGammaS}
\tilde\g_{s,N}(A):=E_s[1_{\{A\cap\{\|P_Nu\|_{L^2}\leq R\}}]\,,\quad A\in\mathscr B(L^2(\T))\,. 
\end{equation}
Recall that~$\tilde\g_{s,N}(A)$ also depends on $R$, even though we will not track this dependence to simplify the notations.

Using the group property of $\{\Ga^N_\a\}_{\a\in\R}$ we can easily check that 
\begin{equation}\label{DerivativeAtAlpha=0}
\frac{d}{d\a}(\tilde \g_{s,N}\circ \Ga_{\a}^N)(A)\Big|_{\a=\bar\a}=\frac{d}{d\a}(\tilde \g_{s,N}\circ \Ga_{\a}^N)(\Ga_{\bar\a}^NA)\Big|_{\a=0} \, .
\end{equation}
Now we use the factorisation \eqref{GammaPerp} and Proposition 4.1 of \cite{sigma}, so that for all $E \subset \mathscr B(L^2)$ we have 
\begin{align}\nonumber
& \tilde \g_{s,N}(\Ga_\a^N (E))  = \int_{\Ga_\a^N (E)} \g_{s}(du) 1_{\left\{ \| P_N u \|_{L^2} \leq R \| \right\}} 
\\ \nonumber
& = \int_{E}  L_N (d P_N u)  \g_{s}^{\perp, N} (d P_{>N} u) 
1_{\left\{ \| P_N u \|_{L^2} \leq R \| \right\}} |\det D\Ga^N_\a (u)| \exp \left( -\frac{1}{2} \| P_N \Ga_\a^N u  \|_{\dot{H}_s}^2 - \frac12 \|P_N u\|_{L^2}^2 \right)  ,
\\ \nonumber
& =
\int_{E} \tilde \g_{s,N}(du) |\det D P_N \Ga^N_\a (u)| \exp\left(\frac12 \| P_N u \|^2_{\dot{H}^s}-\frac12\| P_N \Ga_\a^N u\|^2_{\dot{H}^s}\right) 
\end{align}
where $D P_N \Ga^N_\a (u)$ denotes the Jacobian matrix associated to $P_N \Ga_\a^N$ and in the  
second identity we used \eqref{L2Conservation}. 
Using this identity with $E = \Ga_{\bar \a}^N A$, we arrive to 
$$
\tilde \g_{s,N}\circ \Ga_{\a}^N (\Ga_{\bar\a}^NA) = 
\int_{\Ga_{\bar\a}^NA} |\det D P_N \Ga^N_\a (u)| \exp\left(\frac12 \| P_N u \|^2_{\dot{H}^s}-\frac12\| P_N \Ga_\a^N u\|^2_{\dot{H}^s}\right) \tilde \g_{s,N}(du)
$$
so that, using \eqref{DetDerDa} to compute the derivative in $d\alpha$ at $\a =0$ of $|\det D P_N \Ga^N_\a (u)|$, we can rewrite \eqref{DerivativeAtAlpha=0} as
\begin{align}\nn
&
\frac{d}{d\a}(\tilde \g_{s,N}\circ \Ga_{\a}^N)(\Ga_{\bar\a}^NA)\Big|_{\a=0}
\\ \nn
&=\int_{\Ga_{\bar\a}^NA}\tilde\g_{s,N}(du) \eta(u) \dive P_N \left(\mathcal I[P_N u] P_N u)\right)+\int_{\Ga_{\bar\a}^NA}\tilde\g_{s,N}(du)\frac{d}{d\a}\|P_N \Ga_\a^Nu\|^2_{ \dot H^s}\Big|_{\a=0} \\
&=\int_{\Ga_{\bar\a}^NA}\tilde\g_{s,N}(du) \eta(u) \dive P_N \left(\mathcal I[P_N u] P_N u)\right)+\int_{\Ga_{\bar\a}^NA}\tilde\g_{s,N}(du)\frac{d}{d\a}\| \Ga_\a P_N u\|^2_{ \dot H^s}\Big|_{\a=0}\, \label{eq:main-inter1}
\end{align}
where $\eta(u) := i \frac{\det D P_N \Ga^N_\a (u)}{|\det D P_N \Ga^N_\a (u)|}$, so in particular $|\eta| =1$, and we also used
\begin{equation}\label{Time=0}
\frac{d}{d\a}\| P_N \Ga_\a^N u\|^2_{\dot H^s}\Big|_{\a=0} = 
\frac{d}{d\a}\| \Ga_\a P_N u\|^2_{\dot H^s}\Big|_{\a=0} \, . 
\end{equation}
To prove \eqref{Time=0}, bearing in mind \eqref{FracDer}, \eqref{HsInSpace}, \eqref{eq:gaugeN-cauchy}, we observe that
\begin{align}\nn
\frac{d}{d\a}\| P_N \Ga_\a^N u\|^2_{\dot{H}^s}\Big|_{\a=0}
&=2\Re\int (\overline{ P_N  \Ga_\a^N u})^{(s)} \left(\frac{d}{d\a} P_N \Ga_\a^N u\right)^{(s)}\Big|_{\a=0}\nn\\
&=2\Re\int \overline{P_N  \Ga_\a^N u}^{(s)} (i\mc I[ P_N  \Ga_\a^N u] P_N  \Ga_\a^N u)^{(s)}\Big|_{\a=0}\nn\\
&=2\Re\int \overline{P_N u}^{(s)} (i\mc I[P_N u] P_N u)^{(s)}=\nn \\
&=2\Re\int \overline{\Ga_\a P_N u}^{(s)} (i\mc I[  \Ga_\a P_N u] \Ga_\a P_N u)^{(s)}\Big|_{\a=0} \nn\\
&=2\Re\int (\overline{ \Ga_\a P_N u})^{(s)} \left(\frac{d}{d\a}  \Ga_\a P_Nu \right)^{(s)}\Big|_{\a=0}
= \frac{d}{d\a}\|\Ga_\a u\|^2_{\dot{H}^s}\big|_{\a=0}\,.
\label{recall}
\end{align}

Now, the first summand in (\ref{eq:main-inter1}) gives a vanishing contribution as $N\to\infty$. 
Indeed by Proposition~\ref{prop:E(div)} below and H\"older inequality there is $\e>0$ such that
\bea
\int_{\Ga_{\bar\a}^NA}\tilde\g_{s,N}(du)| \dive P_N \left(\mathcal I[P_N u] P_N u| \right)|\lesssim \frac{\tilde \g_{s,N}(\Ga_{\bar\a}^NA)^{1-\frac1p}p}{N^{\e}}\,.
\eea
On the second summand use again H\"older inequality: 
\be
\int_{\Ga_{\bar\a}^NA}\tilde\g_{s,N}(du)\frac{d}{d\a}\|\Ga_\a P_N u\|^2_{\dot H^s}\Big|_{\a=0}\leq \tilde\g_{s,N}(\Ga_{\bar\a}^NA)^{1-\frac1p}
\left\|\frac{d}{d\a}\| \Ga_\a P_N u\|^2_{\dot H^s}\Big|_{\a=0}\right\|_{L^p(\tilde \g_{s,N})}\,.
\ee

Hereafter we set 
\begin{equation}\label{Def:R^*}
R^*  := \max\left( R^{\frac{2}{2s-1}}, R^{2(2s-1)}\right).
\end{equation}

\begin{proposition}\label{prop:mom-sub-exp}
Let $s>\frac12$ and $R> 0$.
For all $N$ there is a $C>0$ such that 
\be
\left\|\frac{d}{d\a}\| \Ga_\a P_Nu\|^2_{\dot H^s}\Big|_{\a=0}\right\|_{L^p(\tilde \g_{s,N})} \leq C R^* p\,.
\ee
\end{proposition}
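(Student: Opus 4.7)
The plan is to show a sub-exponential tail estimate
$\tilde\g_{s,N}(|F_N(u)|>t)\lesssim e^{-ct/(CR^*)}$
for $F_N(u):=\frac{d}{d\a}\|\Ga_\a P_Nu\|^2_{\dot H^s}|_{\a=0}$ and then integrate to recover the claimed linear $p$-growth of the $L^p$ norm. The starting point is the explicit representation derived in \eqref{recall},
$$F_N(u) \;=\; -2\,\Im\!\int \overline{(P_Nu)^{(s)}}\,\bigl(\mc I[P_Nu]\,P_Nu\bigr)^{(s)},$$
which expanded in Fourier is a real quartic form in the i.i.d.\ complex Gaussians $g_n = (1+|n|^{2s})^{1/2}\,u(n)$, $|n|\le N$, indexed by $n,n_1,n_2,n_3$ with $n=n_1+n_2+n_3$, $n_2+n_3\neq0$, weighted by $|n|^{2s}/(n_2+n_3)$.

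Next I would perform a Leibnitz-type splitting of the fractional weight $|n|^{2s}$ into a contribution landing on the conjugated factor, a contribution landing on the other $P_Nu$ factor, and a commutator. The first piece yields an integrand of the form $\int\mc I[P_Nu]\,|(P_Nu)^{(s)}|^2$, which is real and therefore annihilated by the imaginary part: this is the DNLS energy-type cancellation already used in \cite{GLV2,phil}. The remaining symmetric piece and the commutator are treated after a fractional integration by parts that produces some derivative sharing among the three factors. A dyadic Littlewood--Paley decomposition then separates a \emph{high-frequency regime}, in which the gauge generator is essentially smoothing and direct frequency-weight summability gives the required estimate, from a \emph{low-frequency regime}, which encodes the singular non-Hilbert--Schmidt behaviour of $\mc I[\cdot]$ and must be handled differently.

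In the low-frequency regime I would exploit the ball restriction $\|P_Nu\|_{L^2}\leq R$ to extract a factor $\|P_Nu\|_{L^2}^2\leq R^2$, collapsing two of the four Gaussian factors and effectively reducing a piece of $F_N$ to a quadratic form in the $g_n$. On such a second-chaos remainder the Bernstein inequality \eqref{eq_Bernstein} applied along the lines of Bourgain \cite{B94}, i.e.\ freezing one family of modes and treating the sum over the others as a linear combination of conditionally independent sub-exponentials, delivers sub-exponential concentration with constant linear in $R^*$. The two extreme regimes $R\ll1$ and $R\gg1$, balanced via the dyadic cutoff between low and high frequencies (whose optimal position depends on $s$), conspire to produce the specific form $R^*=\max(R^{2/(2s-1)},R^{2(2s-1)})$ dictated by \eqref{Def:R^*}.

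The main obstacle is this low-frequency estimate. For integer $s$ the Leibnitz rule is exact and \cite[Lemma 2.9]{GLV2} gives a transparent representation of $F_N$; for fractional $s$ the commutator left over by the fractional integration by parts is non-negligible, and its multilinear Fourier analysis—keeping track of the three off-diagonal wavenumbers entering $\mc I[P_Nu]\,P_Nu$ and of the singular denominator $1/(n_2+n_3)$—is the technical core. The $L^2(\tilde\g_{s,N})$ bound established in Section \ref{sect:L2} will serve as the baseline input; the upgrade to sub-exponential moments via Bourgain's iteration, together with the careful tracking of $R$-dependence to close with the prefactor $R^*p$, is the genuinely new step.
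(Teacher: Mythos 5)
Your overall strategy --- prove a sub-exponential tail bound $\tilde\gamma_{s,N}(|F_N|>t)\lesssim e^{-ct/R^*}$ and integrate it --- is exactly the paper's, and you have correctly identified the key supporting ideas: the imaginary-part cancellation killing the "Leibniz diagonal" term (cf.\ \eqref{eq:canc}, \eqref{eq:canc2}), the necessity of treating low and high frequencies differently, the Bourgain-style use of Bernstein on a second chaos, and the role of the $L^2(\gamma_s)$ convergence of Section~\ref{sect:L2} as input. However, the specific mechanism you propose for the tail estimate is both different from and weaker than the paper's, and as stated it has a genuine gap.

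You propose, in the low-frequency regime, to "extract a factor $\|P_Nu\|_{L^2}^2\le R^2$, collapsing two of the four Gaussian factors" so that $F_N$ reduces to a quadratic form, and then to apply Bernstein to that second-chaos remainder. This does not go through directly. The quartic form \eqref{eq:F-2} is supported on the resonance manifold $n_1+n_2=m_1+m_2$ together with $n_1\neq m_1$; so $m_2-n_2 = n_1-m_1 \neq 0$, and there is no diagonal pairing of $(m_2,n_2)$ available to produce $\|P_Nu\|_{L^2}^2$. Cauchy--Schwarz in $(m_2,n_2)$ leaves residual weights entangled with $(n_1,m_1)$ and the singular $1/(m_1-n_1)$, and the resulting "quadratic form" has coefficients that are not summable in a way that Bernstein would accept with an $R^*$-controlled constant. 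What the paper actually does (Lemma~\ref{lemma:L-Pbound}) is prove the deterministic pointwise bound $|F_N|\lesssim X_N^2 Y_N^2$ with $X_N=\sum_j 2^{j(s-1/2)}\|\Delta_j P_N u\|_{L^2}$ and $Y_N=\sum_j \sum_{|n|\simeq 2^j}|u(n)|$, and then tail-estimates $X_N$ and $Y_N$ \emph{separately} (Lemmas~\ref{lemma:subexpX}, \ref{lemma:subexpY}), using the $L^2$ mass restriction to kill \emph{deterministically} all dyadic blocks $j\le j_t$ with a $t$- and $R$-dependent threshold $j_t$, and applying Bernstein/Hoeffding only to $j>j_t$. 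This multiplicative factorisation, and the choice of $j_t$ (which is precisely what produces the exponent mix in $R^*=\max(R^{2/(2s-1)},R^{2(2s-1)})$), is the missing idea in your plan.

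You also omit the bulk/tail dichotomy. The dyadic argument only gives the sub-exponential bound for $t\gtrsim(\log N)^{\text{const}}$; the regime $t\le\sqrt{N^{2s-1}}$ (Proposition~\ref{prop:sub-exp2}) is handled separately via the union bound $\tilde\gamma_{s,N}(|F_N|\ge t)\le \gamma_s(|F_N-F_T|\ge t/2)+\tilde\gamma_{s,N}(|F_T|\ge t/2)$ with $T=\lfloor t^{2/(2s-1)}\rfloor<N$, invoking the hypercontractivity-driven concentration of Proposition~\ref{prop:exp0}. Your phrase "Bourgain's iteration" gestures at something like this but does not identify this specific splitting, which is needed to make the argument close for all $t$.
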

Altogether
\be
\frac{d}{d\a}(\tilde \g_{s,N}\circ \Ga_{\a}^N)(A)\Big|_{\a = \bar\a}\leq C R^* p\,\tilde\g_{s,N}(\Ga_{\bar\a}^NA)^{1-\frac1p}\, ,
\ee
which implies
\be
\frac{d}{d\a}\left(\tilde \g_{s,N}\circ \Ga_{\a}^N)(A)\right)^{\frac1p}\leq C R^*\,.  
\ee
Thus
\be\label{eq:start-again} 
(\tilde \g_{s,N}\circ \Ga_{\a}^N)(A)\leq (CR^* | \a| +\tilde \g_{s,N}(A)^{\frac1p})^p\leq ((CR^*)^p|\a|^p+\tilde \g_{s,N}(A))2^{p-1}\,.
\ee
 Let $\delta >0$ and $\tilde \g_{s}(A) \leq \delta$. 
Since $$1_{A \cap  \{ \|P_N u\|_{L^2} \leq R \} } \to 1_{A \cap \{ \| u\|_{L^2} \leq R \}}, \quad  \g_s\mbox{-a.s. as $N \to \infty$},$$ 
by dominated convergence we have
\be 
(\tilde \g_{s,N}\circ \Ga_{\a}^N)(A) \leq ((CR^*)^p|\a|^p+ 2 \delta)2^{p-1} \, ,
\ee
for all $N$ sufficiently large (the choice of $N$ only depends on $A$).
Now letting $\bar{\a} := \frac{1}{4C R^*}$ we have that for all $|\a| \leq \bar{\a}$: 
\be
(\tilde \g_{s,N}\circ \Ga_{\a}^N)(A)\leq \frac12\left(2^{-p}+\delta2^{p+1}\right)\,,\quad \forall p>1\,. 
\ee
Therefore for any $\e \in (0,1/2)$ we can take $p=-\log_2\e$ and see that there is $0<\d<\e^2$ such that
\be\label{eq:local-QI-N}
\tilde \g_{s}(A)\leq \d\quad\Rightarrow\quad(\tilde \g_{s,N}\circ \Ga_{\a}^N)(A)\leq\e, 
\qquad |\a| \leq \bar{\a}\,.
\ee
To upgrade \eqref{eq:local-QI-N} to the limiting version for $N\to\infty$ we use Corollary~\ref{cor:StabCorImp}. 

Let us take $R>0$ and any compact $A\subset B(R)$, such that $\tilde\g_{s}(A)\leq \d /2$. 
Since $A$ is compact, we can choose a small enough $\e'>0$ such that
\begin{equation}\label{CompactA}
\tilde\g_{s}(A+B (\e'))\leq  \d\,.
\end{equation}
By \eqref{eq:local-QI-N} for $|\a|\leq \bar\a$ we get
$$
\tilde\g_{s,N}(\Ga^N_\a(A+B(\e')))\leq \e\,.
$$
Corollary \ref{cor:StabCorImp} and the obvious inclusion $B(R) \subseteq \{\|P_Nu\|_2\leq R\}$ implies that, for all $N$ sufficiently large (again the 
choice of $N$ only depends on $A$):
$$
\Ga_\a(A) \cap B(R) \subset   \Ga^N_\a(A+B(\e'))  \cap \{\|P_Nu\|_2\leq R\} .
$$
Thus
\begin{align}\nonumber
(\tilde\g_s\circ\Ga_\a)(A) 
& =\g_s(\Ga_\a(A)\cap B(R)) 
\\ \nonumber
& \leq \g_s(  \Ga^N_\a(A+B(\e'))  \cap \{\|P_Nu\|_2\leq R\})
 = \tilde\g_{s,N}(\Ga^N_\a(A+B(\e'))) \leq \varepsilon.
\end{align}

In conclusion there exists $\bar{\alpha}$ such that the following holds. For all $\varepsilon \in (0, 1/2)$ we can take $0<\d<\e^2$ such that for 
any $|\alpha| < |\bar{\alpha}|$ and for any compact $A\subset B(R)$, we have
\be\label{eq:local-QI}
\tilde \g_{s}(A)\leq \d \quad \Rightarrow \quad (\tilde \g_{s}\circ \Ga_{\a})(A)\leq\e\,.
\ee

We can extend the previous relation to any $A \in \mathscr{B}(L^2(\T))\cap B(R)$ using the regularity of $\tilde\g_s$ (inherited by $\g_s$) by the general procedure explained in \cite[Lemma 8.1]{sigma}, which easily adapts here. 
This proves the local 
almost invariance of $\tilde \g_s$ under $\Ga_\a$, $|\alpha| \leq \bar{\a}$. Since $\bar{\alpha}$ only depends on $R$ and 
the restriction $u\in B(R)$ is invariant under $\Ga_\a$, 
we can globalise to $\alpha \in \R$ by the usual gluing procedure.


Therefore we have shown (\ref{eq:THgauge}), where the density $\r_\a$ is in $L^1(\tilde\g_s)$. It remains to prove there exists $p_0>1$ such that the density lies in all the spaces $L^p(\tilde\g_s)$ for $p\in[1,p_0)$. First of all we start by a somewhat more quantitative version of~(\ref{eq:local-QI-N}). 
\begin{lemma}\label{lemma:quasi-quant}
Let $s > 1/2$. There exist $\a_0>0$ such that the following holds. For all $|\a|< \a_0$ and for all 
$A\in \mathscr B(L^2)\cap B(R)$ one has
\be\label{eq:quasi-quant}
(\tilde \g_{s}\circ \Ga_{\a})(A)\leq 2 \tilde  \g_{s}(A)^{1/2}\,. 
\ee
More precisely we can choose $\alpha_0 = c / R^*$, where $c>0$ is an absolute constant and $R^*$ is a function of the mass $R$ defined in \eqref{Def:R^*}.  
\end{lemma}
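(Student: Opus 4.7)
The strategy would be to reuse the estimate (\ref{eq:start-again}),
\[ (\tilde\g_{s,N}\circ\Ga_\a^N)(A) \leq \bigl( (CR^*|\a|)^p + \tilde\g_{s,N}(A) \bigr)\, 2^{p-1}, \qquad p>1, \]
which holds for every $p>1$, but with a sharper choice of $p$. The proof of Theorem \ref{th:gauge} picked $p=-\log_2\e$, independent of $A$, yielding only a qualitative statement. For the quantitative form $\leq 2\tilde\g_s(A)^{1/2}$, one would tune $p$ to the measure of $A$ itself: setting
\[ p := 1 + \tfrac12 \log_2\!\bigl(1/\tilde\g_{s,N}(A)\bigr) \]
makes $2^{p-1} = \tilde\g_{s,N}(A)^{-1/2}$, and the choice is admissible ($p\geq 1$) because $\tilde\g_{s,N}(A)\leq 1$. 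The degenerate case $\tilde\g_{s,N}(A)=0$ is covered by the absolute continuity of $\tilde\g_s\circ\Ga_\a$ w.r.t.\ $\tilde\g_s$ already established.

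With this $p$, choosing $\a_0 := 1/(4CR^*) = c/R^*$ with $c = 1/(4C)$ guarantees $CR^*|\a|\leq 1/4$ for $|\a|\leq\a_0$, whence $(CR^*|\a|)^p \leq 4^{-p} = \tfrac14\,\tilde\g_{s,N}(A)$ by the definition of $p$. Substituting collapses the displayed estimate to
\[ (\tilde\g_{s,N}\circ\Ga_\a^N)(A) \leq 2^{p-1}\bigl(\tfrac14\, \tilde\g_{s,N}(A) + \tilde\g_{s,N}(A)\bigr) = \tfrac54\, \tilde\g_{s,N}(A)^{1/2}, \]
which is comfortably below the target $2\tilde\g_{s,N}(A)^{1/2}$, leaving room to absorb the losses incurred in the subsequent limiting procedure.

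The concluding step would mimic verbatim the passage from the finite-$N$ to the limiting statement already carried out after (\ref{eq:local-QI-N}) in the proof of Theorem \ref{th:gauge}: outer-approximate $A \subset B(R)$ by a compact subset enlarged by a small ball $B(\e')$, apply Corollary \ref{cor:StabCorImp} to get $\Ga_\a(A)\cap B(R) \subseteq \Ga_\a^N(A + B(\e'))\cap\{\|P_Nu\|_{L^2}\leq R\}$ for all sufficiently large $N$, invoke dominated convergence (using $1_{\{\|P_N u\|_{L^2}\leq R\}} \to 1_{\{\|u\|_{L^2}\leq R\}}$ $\g_s$-a.s.) to replace $\tilde\g_{s,N}$ with $\tilde\g_s$ on both sides, and finally extend to arbitrary $A \in \mathscr B(L^2)\cap B(R)$ via the regularity of $\tilde\g_s$ as in \cite[Lemma 8.1]{sigma}. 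No new analytic obstacle arises; the only ingredient beyond the proof of Theorem \ref{th:gauge} is the sharper, $A$-dependent calibration of $p$ above.
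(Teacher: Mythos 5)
Your proposal is correct and follows essentially the same route as the paper's proof: both start from \eqref{eq:start-again} and obtain the quantitative bound by tuning the free exponent $p$ to the measure of the set $A$, with the admissible range of $\alpha$ coming out to $\alpha_0 = c/R^*$. The only (inessential) difference is algebraic: the paper works with the sharper form $(CR^*|\alpha| + \tilde\g_{s,N}(A)^{1/p})^p$ and sets $p=\log(1/(2\tilde\g_s(A)))$ so that $(2\tilde\g_s(A))^{-1/p}=e$, whereas you use the rearranged form $((CR^*|\alpha|)^p + \tilde\g_{s,N}(A))2^{p-1}$ and set $p$ so that $2^{p-1}=\tilde\g_{s,N}(A)^{-1/2}$; both yield a constant strictly below $2$, leaving room for the loss from $\tilde\g_{s,N}(A)\leq 2\tilde\g_s(A)$ (for $N$ large) in the limiting step, which you then handle exactly as the paper does via Corollary \ref{cor:StabCorImp} and inner regularity.
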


\begin{proof}
Let fix $A$ and let us start again by (see (\ref{eq:start-again}))
\begin{equation}\label{RHSOT}
(\tilde \g_{s,N}\circ \Ga_{\a}^N)(A)\leq (CR^* | \a| +\tilde \g_{s,N}(A)^{\frac1p})^p.
\end{equation}
We can assume $\tilde \g_s (A) >0$, otherwise \eqref{eq:quasi-quant} is consequence of \eqref{eq:local-QI}. Thus, since 
$$\tilde \g_{s, N}(A) \to \tilde \g_s (A) >0 \ \mbox{as} \ N \to \infty$$  
we have, for all sufficiently large $N$ (the choice of $N$ only depends on $A$)
$$
\tilde \g_{s, N}( A) \leq 2 \tilde \g_s (A)
$$
Thus we can bound the right hand side of \eqref{RHSOT} as 
\be
(C R^* |\a| +  (2 \tilde \g_s (A))^{\frac1p})^p 
= 2  \tilde\g_s( A)\left(1+\frac{2 C R^* |\a|}{ (2 \tilde \g_s(A))^{\frac1p}}\right)^p
= 2 \tilde\g_s(A)e^{p\log\left(1+  C R^* |\a| ( 2 \tilde \g_s (A))^{-\frac1p}\right)}\,.
\ee
Now we can pick
\be \label{eq:p(A)}
p=p(A) =  \log\frac{1}{2 \tilde\g_s(A)}\quad\mbox{ such that } \quad (2 \tilde\g_s(A))^{-\frac1p} =  e\,.
\ee
Thus
\be\label{Plug1}
(\tilde \g_{s,N}\circ \Ga_{\a}^N)(A)\leq 2 \tilde\g_s(P_N A)e^{p\log\left(1+ C R^* e\a\right)}\leq 2 \tilde\g_s(P_N A)e^{p  C R^* e\a}\,. 
\ee
Then we claim that 
\be\label{Plug2}
e^{p   C R^* e \a}\leq 
\tilde\g_s(A)^{-1/2}\,.
\ee
To have that, it must be
\be\label{ChoiceOfAlpha}
p  C R^* e\a\leq \frac12 \log\frac{1}{\tilde\g_s(A)} = \frac{p}{2}
\ee
which is true for $|\alpha| \leq \alpha_0$ with $\alpha_0 = c / R^*$ and $c$ sufficiently small.
Plugging \eqref{Plug2} into \eqref{Plug1} we arrive to
\be\label{eq:quasi-quantN}
(\tilde \g_{s,N}\circ \Ga_{\a}^N)(A)\leq 2 \tilde\g_s(A)^{1/2}, \qquad |\alpha| \leq \alpha_0\,.
\ee
Finally we upgrade (\ref{eq:quasi-quantN}) to (\ref{eq:quasi-quant}) using Corollary \ref{cor:StabCorImp} as in the non quantitative argument above. 
\end{proof}

The size of $\alpha_0$ in Lemma \eqref{lemma:quasi-quant} can be arbitrarily increased but paying an arbitrarily small factor loss on the 
exponent on the right hand side 
of \eqref{eq:quasi-quant} \cite[Remark 5.6]{OT2}. We have the following 

\begin{lemma}\label{lemma:quasi-quantGlob}
Let $s > 1/2$ and $\alpha \in \R$. There exist an absolute constant $\bar C > 1$ such that for all 
$A\in \mathscr B(L^2)\cap B(R)$ one has
\be\label{eq:quasi-quantFinal}
(\tilde \g_{s}\circ \Ga_{\a})(A) \leq 4  \tilde\g_{s}(A)^{\frac{1}{\bar C^{ R^* |\alpha|}}}\,. 
\ee
\end{lemma}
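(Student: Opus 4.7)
The plan is to iterate Lemma \ref{lemma:quasi-quant} using the group property \eqref{eq:gauge-properties} and the fact that $\Ga_\a$ preserves $B(R)$ (since $\|\Ga_\a u\|_{L^2}=\|u\|_{L^2}$ by \eqref{L2Conservation} with $N=\infty$). Without loss of generality assume $\a>0$. Let $\a_0=c/R^*$ be the threshold produced by Lemma \ref{lemma:quasi-quant}, and write
$$\a=n\a_0+r,\qquad n:=\lfloor \a/\a_0\rfloor,\qquad 0\leq r<\a_0.$$

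First I would observe that, for any measurable $B\subset B(R)$, the set $\Ga_{\a_0}(B)$ is again contained in $B(R)$, so Lemma \ref{lemma:quasi-quant} may be applied to it in place of $A$. Setting $A_k:=\Ga_{k\a_0+r}(A)\subset B(R)$ for $0\leq k\leq n$, the group property gives $A_k=\Ga_{\a_0}(A_{k-1})$ and so Lemma \ref{lemma:quasi-quant} yields the recursion
$$\tilde\g_s(A_k)\leq 2\,\tilde\g_s(A_{k-1})^{1/2},\qquad k=1,\dots,n.$$
For the first step, note that $\Ga_r$ is itself covered by Lemma \ref{lemma:quasi-quant} since $|r|<\a_0$, so $\tilde\g_s(A_0)=\tilde\g_s(\Ga_r(A))\leq 2\,\tilde\g_s(A)^{1/2}$.

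Iterating the recursion $n$ times and telescoping the geometric series gives
$$\tilde\g_s(\Ga_\a(A))=\tilde\g_s(A_n)\leq 2^{\sum_{j=0}^{n}2^{-j}}\,\tilde\g_s(A)^{1/2^{\,n+1}}\leq 4\,\tilde\g_s(A)^{1/2^{\,n+1}}.$$
Finally, since $n+1\leq \a/\a_0+1=\a R^*/c+1$, choosing any constant $\bar C>1$ with $\bar C\geq 2^{2/c}$ guarantees $2^{n+1}\leq \bar C^{R^*\a}$ for all $\a\geq 0$ (the extra factor of $2$ coming from the ``$+1$'' is absorbed by doubling the exponent $1/c$). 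Since $\tilde\g_s(A)\leq 1$, raising to a larger negative power of the exponent only decreases the bound, so we obtain
$$\tilde\g_s(\Ga_\a(A))\leq 4\,\tilde\g_s(A)^{1/\bar C^{R^*\a}},$$
which is \eqref{eq:quasi-quantFinal}. The case $\a<0$ follows by the same argument using the inverse $\Ga_{-\a}$, which satisfies the analogous estimate by Lemma \ref{lemma:quasi-quant}.

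I do not expect a genuine obstacle here: the only care needed is ensuring that each intermediate set $A_k$ stays inside $B(R)$ so that Lemma \ref{lemma:quasi-quant} keeps applying, and keeping track of the constants so that the geometric series $\sum 2^{-j}$ stays bounded by $2$ (producing the $4$ in front) while the final exponent $2^{-(n+1)}$ is compared with $\bar C^{-R^*\a}$ for a single absolute $\bar C$.
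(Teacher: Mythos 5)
Your argument is essentially identical to the paper's: iterate Lemma~\ref{lemma:quasi-quant} through the group property, note that $\Ga_\a$ preserves $B(R)$ (so each intermediate set remains admissible), telescope the geometric series to produce the prefactor $4 = 2^{\sum_{j\geq 0}2^{-j}}$, and compare the accumulated exponent $2^{-(n+1)}$ with $\bar C^{-R^*|\alpha|}$. The only organizational difference is that you apply the fractional remainder step $\Ga_r$ first (innermost), whereas the paper writes $\a=\a_0 M+\a'$ and applies $\Ga_{\a'}$ last inside an explicit induction; this is immaterial.

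One imprecision, which you share with the paper's own proof: the assertion that $2^{n+1}\leq\bar C^{R^*\a}$ holds ``for all $\a\geq 0$'' is not correct for small $\a$. Indeed, when $\a<\a_0$ one has $n=0$ so $2^{n+1}=2$, while $\bar C^{R^*\a}\to 1$ as $\a\to 0$. Concretely, with $\bar C=2^{2/c}$ the required inequality $\a R^*/c + 1 \leq 2\a R^*/c$ reduces to $\a\geq\a_0$. The paper's step from $2^{-(M+1)}>2^{-\frac{1}{\a_0}(\a+1)}$ to~\eqref{eq:quasi-quantFinal} suffers from the same defect, since $\frac{1}{\a_0}(\a+1)\leq R^*\a\log_2\bar C$ also fails as $\a\to 0$. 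The lemma as stated is still true, but a proof valid down to $\a=0$ would need to rerun the optimization in the proof of Lemma~\ref{lemma:quasi-quant} with an $\a$-dependent choice of $p$, producing an exponent of the form $1-C'R^*|\a|$ in place of the fixed $1/2$; alternatively, one can simply note that for $|\a|\lesssim\a_0$ the cruder bound $2\tilde\g_s(A)^{1/2}$ from Lemma~\ref{lemma:quasi-quant} already gives $\rho_\a\in L^p$ for all $p<2$, which is all the subsequent proposition needs there.
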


\begin{proof}
We can assume $\alpha \geq 0$.
Let $\alpha_0$ be given as in the previous lemma. We can also assume $\alpha_0 \in (0,1)$. 
Let now define $M$ as the largest integer such that $\alpha_0 M \leq \alpha$. 
We will show that for all $M \in \N \cup \{ 0 \}$ we have
\begin{equation}\label{Iterated}
(\tilde \g_{s}\circ \Ga_{\a})(A) \leq 2^{\sum_{j=0}^{M} 2^{-j}}  \tilde\g_{s}(A)^{2^{-(M + 1)}}, 
\quad \mbox{for $\alpha \in [\alpha_0 M, \alpha_0 (M + 1) ] $ } .
\end{equation}
Since $\alpha_0 M  \geq \alpha - \alpha_0$ (by definition of $M$) one has $2^{-(M +1) } \geq  2^{- \frac{1}{\alpha_0} \left( \alpha + \alpha_0\right)} > 2^{- \frac{1}{\alpha_0} \left( \alpha + 1 \right)}$ which means that
 the \eqref{eq:quasi-quantFinal} follows by \eqref{Iterated}, recalling that~$\alpha_0 = c/R^*$ for some absolute small constant~$c >0$.

It remains to prove \eqref{Iterated}. When $M=0$ (which means that $0\leq \alpha \leq \alpha_0$), the \eqref{Iterated} follows by \eqref{eq:quasi-quant}.
Let assume we have proved \eqref{Iterated} up to $M-1$. In particular we have
\begin{equation}\label{InductionAssumption}
\tilde \g_{s}(\Ga_{\a_0 M}(A)) = (\tilde \g_{s}\circ \Ga_{\a_0 M})(A) = ( \tilde \g_{s}(\Ga_{\a_0 M}(A)) )^{1/2}
\leq 2^{ \sum_{j=0}^{M-1} 2^{-j}} \tilde \g_{s}(A)^{2^{-M}}.
\end{equation}
Writing $\alpha \in [\alpha_0 M, \alpha_0 (M + 1) ]$ as $\alpha = \alpha_0 M + \alpha'$ with 
$|\alpha'| \leq \alpha_0$ and using \eqref{eq:quasi-quant}-\eqref{InductionAssumption} we have 
\begin{align*}
(\tilde \g_{s}\circ \Ga_{\a})(A) & =
(\tilde \g_{s}\circ \Ga_{\a'})(\Ga_{\a_0 M}(A))
\leq 2 ( \tilde \g_{s}(\Ga_{\a_0 M}(A)) )^{1/2}
\\ \nonumber
&
\leq 2 \left(  2^{ \sum_{j=0}^{M-1} 2^{-j}} \tilde \g_{s}(A)^{2^{-M}} \right)^{1/2} 
\leq  2^{ \sum_{j=0}^{M} 2^{-j}} \tilde \g_{s}(A)^{2^{-(M+1)}}  ,
\end{align*}
as claimed.

\end{proof}

We are now ready to prove that the density $\rho_\a$ is slightly more than just integrable, for all~$\alpha \in \R$.   

\begin{proposition}
Let $s > 1/2$, $R >0$ and $\alpha \in \R$. There exists $p_0(|\alpha|, R) > 1$ such that 
$$\rho_\a \in L^{p}(\g_s) \quad \mbox{for all} \quad p < p_0(|\alpha|, R).$$  
\end{proposition}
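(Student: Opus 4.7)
The plan is to feed the super-level sets of $\rho_\alpha$ into the quasi-invariance bound of Lemma \ref{lemma:quasi-quantGlob} and then use the layer-cake representation. Set
$$
\theta := \frac{1}{\bar C^{R^* |\alpha|}} \in (0,1),
$$
so that Lemma \ref{lemma:quasi-quantGlob} reads $(\tilde\gamma_s \circ \Gamma_\alpha)(A) \leq 4\,\tilde\gamma_s(A)^\theta$ for every $A \in \mathscr{B}(L^2)\cap B(R)$. Note that for $A \subset B(R)^c$ both sides vanish (by invariance of the $L^2$-ball under $\Gamma_\alpha$), so the inequality holds for all Borel $A$ after replacing $A$ by $A \cap B(R)$.

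Fix $t > 0$ and apply the inequality to the super-level set $A_t := \{u \in B(R) : \rho_\alpha(u) > t\}$. Since $\rho_\alpha$ is the density of $\tilde\gamma_s \circ \Gamma_\alpha$ with respect to $\tilde\gamma_s$, we have
$$
t\,\tilde\gamma_s(A_t) \leq \int_{A_t} \rho_\alpha\, d\tilde\gamma_s = (\tilde\gamma_s \circ \Gamma_\alpha)(A_t) \leq 4\, \tilde\gamma_s(A_t)^\theta,
$$
which, after rearranging, gives the tail bound
$$
\tilde\gamma_s(A_t) \leq \left(\frac{4}{t}\right)^{\frac{1}{1-\theta}}.
$$

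Now write, by the layer-cake formula,
$$
\int \rho_\alpha^p\, d\tilde\gamma_s = p\int_0^\infty t^{p-1}\,\tilde\gamma_s(\{\rho_\alpha > t\})\, dt
\leq p\int_0^1 t^{p-1}\,dt + p\int_1^\infty t^{p-1} \left(\frac{4}{t}\right)^{\frac{1}{1-\theta}}\,dt,
$$
where we used $\tilde\gamma_s(\{\rho_\alpha > t\}) \leq 1$ on $(0,1]$ and the tail bound above on $(1,\infty)$. The first integral equals $1$, and the second converges precisely when
$$
p - 1 - \frac{1}{1-\theta} < -1, \quad \text{i.e.}\quad p < \frac{1}{1-\theta} = \frac{1}{1 - \bar C^{-R^* |\alpha|}}.
$$
Hence the claim holds with
$$
p_0(|\alpha|, R) := \frac{1}{1 - \bar C^{-R^* |\alpha|}} > 1.
$$
There is no real obstacle here: the only input is the sub-probability estimate of Lemma \ref{lemma:quasi-quantGlob}, and the reasoning is a standard Radon--Nikodym/weak-type argument. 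The gain $p_0 > 1$ degenerates as $|\alpha| \to \infty$ or $R \to \infty$ (since $\theta \to 0$), as expected from the exponential blow-up in the quasi-invariance constants.
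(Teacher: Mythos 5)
Your proof is correct and follows essentially the same route as the paper: both apply Lemma \ref{lemma:quasi-quantGlob} to the super-level sets of $\rho_\alpha$, rearrange to get the tail bound $\tilde\gamma_s(\{\rho_\alpha > t\}) \lesssim t^{-1/(1-\theta)}$, and conclude by the layer-cake formula, with your $\theta$ equal to the paper's $1-\delta$ so that $p_0 = 1/\delta = 1/(1-\theta)$ agrees. Your explicit splitting of the integral at $t=1$ is a slightly more careful presentation of the same computation.
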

In fact we have $p_0(|\alpha|, R) \to 1$ as $|\alpha| \to  \infty$.

\begin{proof}
With reference to (\ref{eq:quasi-quantFinal}) we let for brevity 
\begin{equation}\label{Def:delta}
1 - \delta := \frac{1}{\bar C^{ R^* |\alpha|}}
\end{equation}
so that it becomes
\be\label{eq:quasi-quantFinalWithDelta}
(\tilde \g_{s}\circ \Ga_{\a})(A)\lesssim \tilde\g_{s}(A)^{1 - \delta}\,. 
\ee
Since $\bar C > 1$ we have $\delta \in (0,1)$ and $\delta \to 1$ as $\alpha \to  \infty$.
Let now $\l > 0$ and set 
$$
A_{\l,\a} :=\{u\,:\,\r_\a(u)>\l\}\,.
$$
Therefore using (\ref{eq:quasi-quantFinalWithDelta}) we have for $|\a|< \a_0$
\be
\tilde\g_s(A_{\l,\a})=\frac1\l\int_{A_{\l,\a}}\l \tilde\g_s(du)\leq \frac1\l\int_{A_{\l,\a}}\r_\a(u) \tilde\g_s(du)=\frac1\l(\tilde \g_{s}\circ \Ga_{\a})(A_{\l,\a})
\lesssim \frac{1}{\l}\tilde\g_{s}(A_{\l,\a})^{1-\d}\,. 
\ee
Consequently
\be\label{eq:gammaAlambda}
\tilde\g_s(A_{\l,\a})\lesssim \left(\frac{1}{\l}\right)^{1/ \d}\,.
\ee
Finally, we write
$$
\|\r_\a\|^p_p=p\int_0^\infty \l^{p-1}\tilde\g_s(A_{\l,\a})d\l<\infty\,,
$$
thanks to \eqref{eq:gammaAlambda} if $\d^{-1}-(p-1)>1$, that is $p<\d^{-1}$. The statement follows letting 
$p_0:= \d^{-1}$. Indeed, recalling the definition \eqref{Def:delta} of $\delta$ (in particular $\delta \in (0,1)$) and the fact that~$R^*$ only depends on~$R$, we have 
$p_0 = p_0(|\alpha|, R) > 1$, as claimed. 

\end{proof}

\section{The Jacobian determinant}\label{sect:J}
We denote the divergence operator $\dive$ when applied to an $n-$th 
dimensional vectorial function $H\,:\, E_N\times E_N\mapsto \C$ as
$$
\dive H (P_N u, P_N \bar u) = \sum_{|n| \leq N} \left( 
\frac{ \partial H_{n} }{ \partial u (n) } + \frac{ \partial \bar{H}_{n} }{ \partial \bar u (n) }  \right) \, .
$$
Let us recall Proposition 6.6 of \cite{GLV2}.
\begin{proposition}\label{FlowMapBij}
We have
\begin{equation}\label{DiffDive}
\det[(D P_N \Ga^N_{\a})(u)] = \exp \left( \int_0^\a d\a'     \dive i P_N  \left(\mathcal I [P_N \Ga^N_{\a'}(u) ] P_N \Ga^N_{\a'}(u) \right) \right) \, .
\end{equation}
\end{proposition}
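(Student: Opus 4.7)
The plan is to reduce the identity to the classical Liouville--Jacobi formula for the Jacobian determinant of a finite-dimensional smooth ODE flow, and then to identify the resulting trace with the $\dive$ operator as defined right above the proposition. This is essentially how the cited Proposition~6.6 of \cite{GLV2} is proved, and one could invoke it directly; I describe the steps for completeness.

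First I would observe that by \eqref{eq:gaugeN-cauchy} the truncated gauge $\Ga^N_\a$ acts as the identity on Fourier modes with $|n|>N$, so it restricts to a smooth, globally defined flow on the finite-dimensional complex vector space $E_N\simeq\C^{2N+1}$. Setting $v:=P_Nu$ and $F(v):=iP_N(\mc I[v]\,v)$, the trajectory $v_\a:=P_N\Ga^N_\a u$ satisfies the $E_N$-valued ODE $\dot v_\a=F(v_\a)$ with $v_0=v$; global existence is guaranteed by the mass conservation in \eqref{L2Conservation}. The Jacobian $J_\a(v)$ of the map $v\mapsto v_\a$ is then precisely the restriction to $E_N$ of $DP_N\Ga^N_\a(u)$, and its determinant is the quantity on the left-hand side of \eqref{DiffDive}.

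Next I would apply the standard linear variational equation $\dot J_\a=DF(v_\a)\,J_\a$ with $J_0=\Id$, together with the scalar Liouville identity $\tfrac{d}{d\a}\det J_\a=\tr(DF(v_\a))\det J_\a$, to deduce
$$
\det J_\a(v)=\exp\Bigl(\int_0^\a \tr\bigl(DF(v_{\a'})\bigr)\,d\a'\Bigr).
$$
The remaining task is to verify, for any smooth $H:E_N\to E_N$, the identity $\tr_\R(DH)=\dive H$ with $\dive$ in the sense defined just above the proposition. This is a standard Wirtinger calculation: writing $v(n)=x_n+iy_n$ and $H_n=A_n+iB_n$, one checks that $\partial_{v(n)}H_n+\partial_{\overline{v(n)}}\overline{H_n}=\partial_{x_n}A_n+\partial_{y_n}B_n$, and summing over $|n|\leq N$ recovers the real trace of $DH$. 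Applied with $H=F$ and composed with the flow, this yields exactly \eqref{DiffDive}.

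The main---essentially bookkeeping---obstacle lies in this last identification, because the vector field $F(v)=iP_N(\mc I[v]v)$ is not holomorphic in $v$ (it depends on $|v|^2$), so the Wirtinger conventions have to be applied consistently and both the $\partial_v$ and $\partial_{\bar v}$ contributions must be retained. Once that is settled, the statement follows by routine application of Liouville's formula to a smooth, globally well-posed ODE on $\C^{2N+1}$.
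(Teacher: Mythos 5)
Your proof is correct and reflects the standard argument. The paper itself does not prove this proposition; it merely cites Proposition~6.6 of \cite{GLV2}, and your Liouville--Jacobi route (variational equation, $\tfrac{d}{d\a}\det J_\a = \tr(DF(v_\a))\det J_\a$, then a Wirtinger computation identifying $\sum_{|n|\leq N}(\partial_{u(n)}H_n + \partial_{\bar u(n)}\bar H_n)$ with $\tr_\R(DH)$) is exactly the mechanism used in that reference, so there is no genuine divergence in approach. One small remark worth keeping in mind: because the vector field $F(v)=iP_N(\mc I[v]v)$ is not holomorphic, the operation $\dive$ does \emph{not} commute with scalar multiplication by $i$ (indeed $\dive(iG)=-2\Im\sum_n\partial_{u(n)}G_n$ whereas $i\dive G=2i\Re\sum_n\partial_{u(n)}G_n$), so the $i$ in \eqref{DiffDive} really must sit inside the argument of $\dive$ as stated, and this is precisely what your trace identity delivers once you apply it with $H=F$ rather than pulling the $i$ out; your proof handles this correctly, which is the one place an inattentive application of the Wirtinger convention would go wrong.
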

Thus
\begin{equation}\label{DetDerDa}
\frac{d}{d\a}\det (D\Ga_\a^N)(u)\Big|_{\a=0}=i\dive P_N \left(\mathcal I[P_N u] P_N u)\right)\,. 
\end{equation}
We set for $s'>0$ and $n_0\in\N$
\be\label{eq:defL}
L_{s',n_0}=L_{s',n_0}[u]:=\sup_{n\geq n_0}\left(n^{s'}| u(n) |\right)\,.
\ee
\begin{lemma}\label{lemma:div1}
For all $s'\in[0,s)$ there is a $C_{s,s'}>0$ such that 
\bea
\g_s(L_{s',n_0}\geq t)&\leq &C_{s,s'}e^{-\frac {t^2}{2}n_0^{2(s-s')}}\label{eq:Csk}\\
\|L^2_{s',n_0}\|_{L^{p}(\g_s)}&\leq&\frac{C_{s,s'}p}{n_0^{s-s'}}\label{eq:Csk2}\,.
\eea
\end{lemma}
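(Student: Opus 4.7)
The plan is to reduce both estimates to elementary Gaussian tail bounds combined with a union bound over Fourier modes. Under $\g_s$ one has $u(n)=g_n(\omega)/(1+|n|^{2s})^{1/2}$ with $\{g_n\}$ i.i.d.\ standard complex Gaussians, so setting $a_n:=n^{s'}/(1+|n|^{2s})^{1/2}$ we have $n^{s'}|u(n)|=a_n|g_n|$ with $a_n^{-2}=n^{-2s'}+n^{2(s-s')}\geq n^{2(s-s')}$ for $n\geq n_0\geq 1$, while the complex Gaussian tail gives $\g_s(|g_n|\geq r)\lesssim e^{-r^2/2}$.

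For the tail bound \eqref{eq:Csk} I would combine the union bound with the pointwise estimate above:
$$\g_s(L_{s',n_0}\geq t) \leq \sum_{|n|\geq n_0}\g_s(|g_n|\geq t/a_n) \lesssim \sum_{n\geq n_0}e^{-t^2 n^{2(s-s')}/2}.$$
Writing $\beta:=2(s-s')>0$, the decisive estimate is $\sum_{n\geq n_0}e^{-t^2 n^\beta/2}\lesssim_{s,s'} e^{-t^2 n_0^\beta/2}$. I would obtain this by extracting the leading $n=n_0$ term and comparing the remainder to $\int_{n_0}^\infty e^{-t^2 x^\beta/2}\,dx$; after the substitution $y=tx^{\beta/2}/\sqrt 2$ the integral reduces to an incomplete Gamma function, which enjoys the Gaussian decay $e^{-t^2 n_0^\beta/2}$ whenever $tn_0^{\beta/2}$ is bounded away from zero. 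In the complementary regime $tn_0^{\beta/2}\lesssim 1$ the exponential on the right-hand side of \eqref{eq:Csk} is bounded below by an absolute positive constant, and the inequality follows from the trivial $\g_s(\cdot)\leq 1$ upon enlarging the prefactor $C_{s,s'}$.

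The moment estimate \eqref{eq:Csk2} is then a direct consequence of \eqref{eq:Csk} via the layer-cake formula:
$$\|L_{s',n_0}^2\|_{L^p(\g_s)}^p = E_s[L_{s',n_0}^{2p}] = 2p\int_0^\infty t^{2p-1}\g_s(L_{s',n_0}\geq t)\,dt \leq 2p\,C_{s,s'}\int_0^\infty t^{2p-1}e^{-t^2 n_0^\beta/2}dt.$$
The substitution $u=t^2 n_0^\beta/2$ evaluates the remaining integral as $2^{p-1}\Gamma(p)/n_0^{\beta p}$; taking the $p$-th root and applying Stirling's bound $\Gamma(p)^{1/p}\lesssim p$ yields $\|L_{s',n_0}^2\|_{L^p(\g_s)}\lesssim p/n_0^{2(s-s')}$, which is stronger than, and hence implies, the stated bound $C_{s,s'}p/n_0^{s-s'}$ since $n_0\geq 1$ and $s>s'$.

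The main technical point is the summation estimate for $\sum_{n\geq n_0}e^{-t^2 n^\beta/2}$: one must interpolate cleanly between the Gaussian regime $tn_0^{\beta/2}\gtrsim 1$, where integral comparison and the asymptotics of the incomplete Gamma function deliver a clean exponential factor, and the small-$tn_0^{\beta/2}$ regime, where the claimed exponential is of order unity and the estimate must instead come from the total-mass bound $\g_s\leq 1$. Once this splitting is carefully arranged, everything else is a routine computation with Gamma functions and Stirling's formula.
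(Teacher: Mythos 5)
Your approach---pointwise Gaussian tail at each frequency, union bound over $n\geq n_0$, and control of the resulting sum---is essentially the paper's; the paper uses exponential moments plus Markov's inequality rather than the Rayleigh tail directly, but this is a cosmetic difference.

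There is, however, a gap in the summation step, and the paper in fact glosses over the same point. With $\beta:=2(s-s')$, the integral comparison does not produce a constant uniform in $t$ and $n_0$: one computes
$$
\int_{n_0}^\infty e^{-t^2 x^\beta/2}\,dx = \frac{(2/t^2)^{1/\beta}}{\beta}\,\Gamma\Big(\frac1\beta,\,\frac{t^2 n_0^\beta}{2}\Big)\lesssim_\beta \frac{n_0}{t^2 n_0^\beta}\,e^{-t^2 n_0^\beta/2}\qquad (t^2 n_0^\beta\geq 1),
$$
and your two-regime dichotomy does not remove the prefactor $n_0/(t^2 n_0^\beta)$, which is of order $n_0$ near the boundary between the regimes. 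In fact \eqref{eq:Csk} itself cannot hold with a constant independent of $n_0$: taking $t$ so that $t^2 n_0^\beta$ is a small multiple of $\log n_0$, a direct evaluation of the individual tails $e^{-c\,t^2 n^\beta}$ shows that their sum diverges and $\g_s(L_{s',n_0}\geq t)\to 1$ as $n_0\to\infty$, whereas the claimed right-hand side decays like a negative power of $n_0$. So \eqref{eq:Csk} is slightly too strong as stated, and the step ``the incomplete Gamma function enjoys Gaussian decay whenever $tn_0^{\beta/2}$ is bounded away from zero'' is where the argument breaks.

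Fortunately only \eqref{eq:Csk2} is used in the sequel (Proposition~\ref{prop:E(div)}), and it is correct; your layer-cake derivation of \eqref{eq:Csk2} from \eqref{eq:Csk} is formally fine but rests on the false intermediate estimate, so \eqref{eq:Csk2} should be proved directly. From $L_{s',n_0}^{2p}\leq\sum_{n\geq n_0}(n^{s'}|u(n)|)^{2p}$ and $E_s[|g_n|^{2p}]=\Gamma(p+1)$ one obtains
$$
E_s\big[L_{s',n_0}^{2p}\big]\leq \Gamma(p+1)\sum_{n\geq n_0}\frac{n^{2ps'}}{(1+n^{2s})^{p}}\lesssim \frac{\Gamma(p+1)}{p\beta-1}\, n_0^{1-p\beta}\qquad (p\beta>1),
$$
and Stirling's $\Gamma(p+1)^{1/p}\lesssim p$ gives $\|L^2_{s',n_0}\|_{L^p(\g_s)}\lesssim p\,n_0^{1/p-\beta}\leq p\,n_0^{-(s-s')}$ for $p\geq 1/(s-s')$; the remaining range $1\leq p<1/(s-s')$ follows by monotonicity of $L^p(\g_s)$ norms on the probability space, costing only a factor $1/(s-s')$ absorbed into $C_{s,s'}$.
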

\begin{proof}
It suffices to prove (\ref{eq:Csk}), then (\ref{eq:Csk2}) readily follows. 

For $\theta>0$ a simple Gaussian integral gives
$$
E_s[e^{\theta n^{s'}| u(n) |}]=\exp\left(\frac{\theta^2}{2}\frac{n^{2s'}}{1+n^{2s}}\right)\,.
$$
Therefore by Markov inequality
$$
\g_s\left(n^{s'}| u(n) |\geq t\right)\leq \exp\left(-\theta t+\frac{\theta^2}{2}\frac{n^{2s'}}{1+n^{2s}}\right)
$$
for any $\theta>0$ and in particular picking $\theta=t(1+n^{2s})n^{-2s'}$ we have
$$
\g_s\left(n^{s'}| u(n) |\geq t\right)\leq \exp\left(-t^2(1+n^{2s})/2n^{2s'}\right)\,. 
$$
Thereby by union bound
$$
\g_s\left(L_{s',n_0}\geq t\right)\leq \sum_{|n|\geq n_0}\exp\left(-t^2(1+n^{2s})/2n^{2s'}\right) <\frac{C}{s-s'}e^{-\frac{t^2}{2}n_0^{2(s-s')}}\,,
$$
where $C$ is uniformly bounded for $n_0\in\N$. Then (\ref{eq:Csk}) follows.

\end{proof}

\begin{lemma}\label{lemma:div}
Let $s>\frac12$, $s'\in(\frac12,s)$, $\e\in(0,\frac12)$. The following bound holds
\be\label{eq:div1}
\left|  \dive P_N \left(\mathcal I[P_N u] P_N u)\right) \right| \lesssim \frac{\| P_N u\|^2_{L^2}}{N^{1-\e}}+L^2_{s', \lfloor N^\e \rfloor}\frac{\log 2N}{N^{\e(2s'-1)}}\,.
\ee
 \end{lemma}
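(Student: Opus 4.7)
The plan is to compute $\dive P_N(\mathcal I[P_Nu]P_Nu)$ directly in Fourier variables, reducing it to a principal-value harmonic sum of the shape $\sum_m|u(m)|^2\sigma_N(m)$, and then to bound this sum via a low/high frequency split at the threshold $|m|=\lfloor N^\e\rfloor$ — the natural scale dictated by the singular kernel $\partial_x^{-1}$ underlying $\mathcal I$.

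I would start by expanding in Fourier. From $\mathcal I[P_Nu]=\sum_{m\neq 0}(im)^{-1}\widehat{|P_Nu|^2}(m)e^{imx}$ the $n$-th Fourier coefficient of the integrand reads
$$
F_n=\frac{1}{i}\sum_{\substack{n_1+n_3-n_4=n\\ n_3\neq n_4,\;|n_j|\leq N}}\frac{u(n_1)u(n_3)\bar u(n_4)}{n_3-n_4}.
$$
Plugging this into the Wirtinger divergence formula recalled at the beginning of Section~\ref{sect:J}, the only contractions in $\partial_{u(n)}F_n$ and $\partial_{\bar u(n)}\bar F_n$ compatible with the constraint $n_3\neq n_4$ are the resonant ones $n_3=n,\;n_1=n_4$ (the alternative $n_1=n$ forces the forbidden $n_3=n_4$). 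A short bookkeeping then collapses the expression to
$$
\dive P_N\bigl(\mathcal I[P_Nu]P_Nu\bigr)=c\sum_{|m|\leq N}|u(m)|^2\,\sigma_N(m),\qquad \sigma_N(m):=\sum_{\substack{|n|\leq N\\ n\neq m}}\frac{1}{n-m},
$$
for an absolute constant $c$ (whose precise value is irrelevant for the estimate).

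Next I would control the harmonic kernel. A telescoping computation gives $\sigma_N(m)=-\sum_{k=N-|m|+1}^{N+|m|}\tfrac{1}{k}$ for $m>0$ (and the sign-flipped analogue for $m<0$), from which one extracts the two bounds
$$
|\sigma_N(m)|\lesssim \log(2N)\quad\text{(always)}\qquad \text{and}\qquad |\sigma_N(m)|\lesssim \frac{|m|+1}{N-|m|}\quad\text{(away from the edge)}.
$$
Then I would split the $m$-sum at $|m|=\lfloor N^\e\rfloor$. For the low part $|m|\leq\lfloor N^\e\rfloor$, since $\e<\tfrac12$ one has $N-|m|\geq N/2$, so the fine bound yields $|\sigma_N(m)|\lesssim N^{\e-1}$; summing against $|u(m)|^2$ produces the first term $\|P_Nu\|_{L^2}^2/N^{1-\e}$. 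For the high part $|m|>\lfloor N^\e\rfloor$, the very definition \eqref{eq:defL} of $L_{s',n_0}$ furnishes the deterministic pointwise bound $|u(m)|^2\leq L^2_{s',\lfloor N^\e\rfloor}|m|^{-2s'}$; combined with the coarse logarithmic bound on $\sigma_N(m)$ and the convergent tail sum $\sum_{|m|\geq\lfloor N^\e\rfloor}|m|^{-2s'}\lesssim N^{-\e(2s'-1)}$ (which requires $s'>\tfrac12$), this delivers the second term of \eqref{eq:div1}.

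The main obstacle I anticipate is the opening combinatorial step: one has to enumerate all possible pairings of the three $u$-factors in $F_n$ with the Wirtinger derivatives, discard those incompatible with the principal-value constraint $n_3\neq n_4$, and combine the surviving resonant contributions from both $\partial_{u(n)}F_n$ and $\partial_{\bar u(n)}\bar F_n$ into the clean quadratic form $\sum_m|u(m)|^2\sigma_N(m)$. Once this reduction is achieved, the kernel estimate on $\sigma_N(m)$ and the low/high frequency split are both elementary.
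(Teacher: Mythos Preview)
Your proposal is correct and follows essentially the same route as the paper: reduce the divergence to $c\sum_{|m|\leq N}|u(m)|^2\,\sigma_N(m)$ with $\sigma_N(m)=-\operatorname{sgn}(m)\sum_{k=N-|m|+1}^{N+|m|}k^{-1}$, split at $|m|=\lfloor N^{\e}\rfloor$, and apply the fine bound $|\sigma_N(m)|\lesssim |m|/N$ on the low piece and the coarse bound $|\sigma_N(m)|\lesssim\log(2N)$ together with $|u(m)|\leq |m|^{-s'}L_{s',\lfloor N^{\e}\rfloor}$ on the high piece. The paper's expression $2\sum_{n=1}^{N}(|u(-n)|^2-|u(n)|^2)\sum_{m=N-n+1}^{N+n}m^{-1}$ is precisely your formula after exploiting the oddness of $\sigma_N$.
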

 \begin{proof}
A direct computation from (\ref{DefMathcali}) yields
\be\label{eq:FTdi-IBis}
(\mathcal{I}[P_N u])(0) = 0, \qquad
(\mathcal{I}[P_N u])(m) = 
- \frac{i}{m} \sum_{|\ell|, |\ell - m| \leq N }  u (\ell) \bar u (\ell - m) \quad \mbox{if}  \quad m \neq 0\,, 
\ee
thus
\begin{equation}\label{FTOTRHS1}
i \left( \mathcal{I}[P_N u] P_N u \right)(n) =   \sum_{m \, : \, m \neq 0, |n-m| \leq N  } \frac{1}{m}
 \sum_{\ell \, : \, |\ell|, |\ell - m|,  \leq N }   u (n-m) u (\ell) \bar u (\ell - m) \, ,
\end{equation}
and
\bea
\dive i P_N \left(\mathcal I[P_N u ] P_N u)\right)&=&   2  \sum_{|n| \leq N} \quad \sum_{m \, : \,  m \neq 0, |n-m| \leq N }  \frac{1}{m} |u (n-m)|^{2}  \nn\\
&=&2 \sum_{n=1}^{N} \left( |u (-n)|^{2} - |u (n)|^{2} \right) \sum_{m=N-n+1}^{N+n} \frac{1}{m} \nn\,.
\eea
We pick any $\e\in(0,\frac12)$ and split the sum in $1\leq n\leq N^\e$ and $n>N^\e$. For the first part we
notice 
\begin{equation}\label{LOG}
\sum_{m=N-n+1}^{N+n} \frac{1}{m} \lesssim \ln \left( N + n \right) - \ln \left( N- n +1\right) = \ln \left( 1 + \frac{2n - 1}{N-n +1} \right)  
\end{equation}
so that we have
$$
\left|\sum_{n=1}^{\lfloor N^\e\rfloor} \left( |u (-n)|^{2} - |u (n)|^{2} \right)\sum_{m=N-n+1}^{N+n} \frac{1}{m} \right|\lesssim \|u\|^2_{L^2}\ln \left( 1 + \frac{2N^\e-1}{N-N^\e+1} \right)\lesssim  \frac{\|P_Nu\|^2_{L^2}}{N^{1-\e}}\,. 
$$
For $n>N^\e$ we use $|u (n)|^{2} \leq n^{-2s'} L^2_{s',\lfloor N^\e\rfloor}$ and \eqref{LOG} to estimate the modulus the second part of the sum as (remember $s' > 1/2$)
\begin{align}\nn
L^2_{s',\lfloor N^\e\rfloor} \sum_{n=\lfloor N^\e\rfloor+1}^{N} \frac{1}{n^{2s'}} \ln \left( 1 + \frac{2n-1}{N-n+1} \right)\leq \ln(2N) L^2_{s',\lfloor N^\e\rfloor} \sum_{n=\lfloor N^\e\rfloor+1}^{N} \frac{1}{n^{2s'}}\leq \frac{\ln 2N}{N^{(2s'-1)\e}} L^2_{s',\lfloor N^\e\rfloor} \, .
\end{align}
This completes the proof of (\ref{eq:div1}). 
\end{proof}

Therefore the following result is easily proven:

\begin{proposition}\label{prop:E(div)}
Let $R>0$, $\e\in(0,\frac12)$ small enough. There is $c(R)>0$ such that
\be
\left\|1_{\{\|u\|_{L^2}\leq R\}}\dive P_N \left(\mathcal I[P_N u ] P_N u)\right)\right\|_{L^p(\g_s)}\lesssim p\frac{c(R)}{N^\e}\,. 
\ee
\end{proposition}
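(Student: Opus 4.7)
The plan is to combine the pointwise bound of Lemma \ref{lemma:div} with the moment bound of Lemma \ref{lemma:div1}. Let $\e'$ denote the parameter used in Lemma \ref{lemma:div}, to be distinguished from the $\e$ appearing in the statement of the present proposition. Fix also some $s' \in (1/2, s)$, say $s' = (s+1/2)/2$. On the indicator set $\{\|u\|_{L^2}\leq R\}$ one has $\|P_N u\|_{L^2} \leq R$, so the first summand on the right-hand side of \eqref{eq:div1} is deterministically bounded by $R^2/N^{1-\e'}$.

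For the second summand I would drop the indicator (which only decreases the $L^p$ norm) and pull the deterministic prefactor $\log(2N)/N^{\e'(2s'-1)}$ out of the $L^p$ norm. Applying the moment bound \eqref{eq:Csk2} of Lemma \ref{lemma:div1} with $n_0 = \lfloor N^{\e'} \rfloor$ then gives
$$
\left\| L^2_{s', \lfloor N^{\e'} \rfloor} \right\|_{L^p(\g_s)} \lesssim \frac{p}{N^{\e'(s-s')}},
$$
so the total contribution of the second summand to the $L^p(\g_s)$ norm is at most $p\log(2N)/N^{\e'(s+s'-1)}$. Summing the two estimates yields
$$
\left\|1_{\{\|u\|_{L^2}\leq R\}}\dive P_N\!\left(\mathcal I[P_Nu]\,P_Nu\right)\right\|_{L^p(\g_s)} \lesssim \frac{R^2}{N^{1-\e'}} + \frac{p\log(2N)}{N^{\e'(s+s'-1)}}.
$$

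To obtain a final decay rate of $N^{-\e}$ for a small $\e$ as in the statement, I would choose $\e'$ so that both $1-\e' \geq \e + \delta$ and $\e'(s+s'-1) \geq \e + \delta$ hold for some small auxiliary $\delta > 0$ used to absorb the logarithm via $\log(2N) \lesssim_\delta N^\delta$. This is feasible precisely because $s+s'-1 > 0$ (which follows from $s,s' > 1/2$): one may simply take $\e'$ slightly larger than $\e/(s+s'-1)$, which is less than $1/2$ for $\e$ small enough. This gives the advertised bound with $c(R) = C(s)(1+R^2)$ and linear dependence on $p$.

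The main obstacle, as this outline makes clear, is simply parameter bookkeeping: one must verify that the low-frequency term of \eqref{eq:div1} (which decays like $N^{-(1-\e')}$) and the high-frequency term (which after taking $L^p$-moments decays like $N^{-\e'(s+s'-1)}$) can be balanced at a common positive rate. The condition $s > 1/2$ enters exactly here through the requirement $s+s' > 1$, and no new analytic input beyond Lemmas \ref{lemma:div1} and \ref{lemma:div} is required.
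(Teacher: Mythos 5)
Your proof is correct and is exactly what the paper intends: the paper supplies no explicit argument for Proposition~\ref{prop:E(div)}, only the remark that it ``is easily proven'' immediately after Lemmas~\ref{lemma:div1} and~\ref{lemma:div}, and your write-up carries out precisely that combination. Your careful distinction between the $\e'$ of Lemma~\ref{lemma:div} and the $\e$ of the proposition is a worthwhile clarification, since the paper reuses the same symbol even though the two exponents cannot in general coincide (one has $\e'(s+s'-1) < \e'$ whenever $s+s' < 2$), and your parameter bookkeeping, using $s+s'-1>0$ and absorbing the $\log(2N)$ into an $N^\delta$, is exactly the right way to reconcile the two rates.
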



\section{$L^2(\g_s)$-convergence
}\label{sect:L2}

We start by a useful representation 
formula for 
\begin{equation}\label{Def:F}
F_N := F[P_N u]:=\frac{d}{d\a}\|\Ga_\a P_N u\|^2_{\dot H^s}\Big|_{\a=0}\,.
\end{equation}

\begin{lemma}\label{Lemma:decomposizione}
Let $s >0$, $N \in \mathbb{N}$ and $u\in L^2(\T)$. We have
\be
 F_N = F_N^{<}+F_N^{\geq}\,,
\ee
where
\begin{equation}\label{eq:F>}
F^{\geq}_N := F^{\geq}[P_Nu] 
:= 2\Re\Big(\sum_{\substack{|m_{1,2}|, |n_{1,2}|\leq N \\ n_1-m_1 \neq 0 \\ |n_1-m_1|\geq \min(|n_1|,|m_1|)\\ n_1+n_2=m_1+m_2}}\frac{|m_1|^{2s}}{m_1-n_1} u(n_1)u(n_2)\bar u(m_1)\bar u(m_2)\Big)
\end{equation}
\begin{align}\label{eq:F<}
& F^{<}_N := F^{<}[P_Nu]
\\ \nonumber
&:= -2\sum_{k\geq1}\frac{(s)_k}{k!}\Re\Big(\sum_{\substack{|m_{1,2}|,|n_{1,2}|\leq N \\  n_1 - m_1 \neq 0 \\ |n_1-m_1|<\min(|n_1|,|m_1|)\\ n_1+n_2=m_1+m_2}} \frac{(m_1-n_1)^{k-1}|m_1|^k}{|m_1n_1|^{k-s}}u(n_1)u(n_2)\bar u(m_1) \bar u(m_2) \Big)\,.
\end{align}
\end{lemma}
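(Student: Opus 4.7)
My plan is to realize $F_N$ as an explicit quadrilinear sum in the Fourier coefficients of $u$, split it according to whether $|n_1-m_1|$ is comparable to or smaller than $\min(|n_1|,|m_1|)$, and rewrite the small-gap piece via the generalized binomial series. Starting from the chain \eqref{recall}, one has $F_N=2\Re\int \overline{P_Nu}^{(s)}\,(i\mathcal I[P_Nu]P_Nu)^{(s)}$. I would then substitute \eqref{eq:FTdi-IBis} for $\mathcal I[P_Nu]$, unfold the convolution in Fourier, and apply Parseval; the same relabeling of dummy variables used in \eqref{FTOTRHS1} (namely $m_1:=n$, $n_1:=n-\mu$, $n_2:=\ell$, $m_2:=\ell-\mu$) produces
\[
F_N=2\Re \sum_{\Omega}\frac{|m_1|^{2s}}{m_1-n_1}\,u(n_1)u(n_2)\bar u(m_1)\bar u(m_2),
\]
where $\Omega$ denotes the finite set of quadruples $(n_{1,2},m_{1,2})$ with $|n_i|,|m_i|\leq N$, $n_1+n_2=m_1+m_2$, and $m_1\neq n_1$.

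Next I would decompose $\Omega=\Omega^\geq\sqcup\Omega^<$ by the condition $|n_1-m_1|\gtrless\min(|n_1|,|m_1|)$. The restriction to $\Omega^\geq$ is $F^\geq_N$ by definition. On $\Omega^<$ the strict inequality forces $n_1$ and $m_1$ to share a common sign $\sigma\in\{\pm1\}$, for otherwise $|n_1-m_1|=|n_1|+|m_1|$ would violate the bound. Setting $\Delta:=|m_1|-|n_1|=\sigma(m_1-n_1)$, one has $|\Delta|<|n_1|$, so the generalized binomial theorem yields the pointwise absolutely convergent expansion
\[
|m_1|^{2s}=|m_1|^s|n_1|^s\bigl(1+\Delta/|n_1|\bigr)^{s}=|m_1|^s|n_1|^s\sum_{k\geq0}\frac{(s)_k}{k!}(\Delta/|n_1|)^k.
\]
Dividing by $m_1-n_1$ and using $\Delta=\sigma(m_1-n_1)$, this becomes
\[
\frac{|m_1|^{2s}}{m_1-n_1}=\frac{|m_1|^s|n_1|^s}{m_1-n_1}+\sum_{k\geq1}\frac{(s)_k}{k!}\sigma^k(m_1-n_1)^{k-1}|m_1|^s|n_1|^{s-k},
\]
and the identity $|m_1|^s|n_1|^{s-k}=|m_1|^k/|m_1n_1|^{k-s}$ brings the tail into the shape of the summand of $F^<_N$.

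Finally, I would check that the leading $k=0$ contribution $\frac{|m_1|^s|n_1|^s}{m_1-n_1}u(n_1)u(n_2)\bar u(m_1)\bar u(m_2)$ is antisymmetric under the involution $(n_1,n_2,m_1,m_2)\leftrightarrow(m_1,m_2,n_1,n_2)$: the rational coefficient is odd and the quadrilinear factor is sent to its complex conjugate, so $2\Re$ of its sum over the involution-invariant set $\Omega^<$ vanishes. Summing the $k\geq1$ remainder and taking $2\Re$ then produces $F^<_N$ in the announced form, provided the $\sigma^k$ factors are absorbed by pairing each term with its swap image, which relies on $\sigma^2=1$, $(m_1-n_1)^{k-1}\mapsto(-1)^{k-1}(m_1-n_1)^{k-1}$, and $u(n_1)u(n_2)\bar u(m_1)\bar u(m_2)\mapsto\overline{u(n_1)u(n_2)\bar u(m_1)\bar u(m_2)}$ under the involution. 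I expect this sign/symmetrization bookkeeping to be the most delicate step of the argument. By contrast, convergence of the double series causes no difficulty: $\Omega^<$ is finite for each $N$, and the binomial expansion is applied pointwise on $\Omega^<$, where absolute convergence is guaranteed by the strict bound $|\Delta|<|n_1|$.
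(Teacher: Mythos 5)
Your overall route is the same as the paper's: derive the explicit quadrilinear Fourier representation of $F_N$ (the identity \eqref{eq:F-2}), split the constraint set by $|n_1-m_1|\gtrless\min(|n_1|,|m_1|)$, apply the falling-factorial binomial series on the small-gap set after writing $|m_1|^{2s}=|m_1|^s|n_1|^s\bigl(1+(m_1-n_1)/n_1\bigr)^s$, and kill the $k=0$ term via the swap antisymmetry — this is precisely the cancellation \eqref{eq:canc}. Up to that point the argument is sound, and you are in fact more careful than the paper's own derivation, which silently replaces $(1+p/n_1)^s$ by $(1+p/|n_1|)^s$ in passing from \eqref{eq:F<0} to \eqref{eq:F<-intermedio}.

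The final step, however, has a genuine gap. You write the $k\geq 1$ summand as $\sigma^k(m_1-n_1)^{k-1}|m_1|^s|n_1|^{s-k}$ with $\sigma=\mathrm{sgn}(n_1)=\mathrm{sgn}(m_1)$ and assert that the $\sigma^k$ is absorbed by pairing each quadruple with its swap image. That cannot work: the involution $(n_1,n_2,m_1,m_2)\mapsto(m_1,m_2,n_1,n_2)$ \emph{fixes} $\sigma$ (both entries share the common sign), flips $(m_1-n_1)^{k-1}$ by $(-1)^{k-1}$, exchanges $|m_1|^s|n_1|^{s-k}\leftrightarrow|n_1|^s|m_1|^{s-k}$, and conjugates the quadrilinear factor, so the paired contribution is
\[
\sigma^k(m_1-n_1)^{k-1}\,\Re z\,\bigl(|m_1|^s|n_1|^{s-k}+(-1)^{k-1}|n_1|^s|m_1|^{s-k}\bigr)
\]
with the $\sigma^k$ sitting untouched in front. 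Since $\sigma$ takes both values $\pm1$ on $\Omega^<$, no post-hoc sign fix can remove it. A concrete check: take $s=1$ (so only $k=1$ survives) and the quadruples $(-2,2,-3,3)$ with its swap (here $\sigma=-1$) together with $(2,-2,3,-3)$ with its swap (here $\sigma=+1$). The direct sum of $\frac{|m_1|^{2s}}{m_1-n_1}z$ over these four quadruples is $0$, which matches the $\sigma^k$-corrected expansion, whereas the announced form of $F^<_N$ produces $-20\Re\bigl(u(-2)u(2)\bar u(-3)\bar u(3)\bigr)$. So the identity you are trying to reach is not literally the one you have proved.

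What you have in fact established is $F_N=F_N^{\geq}+\widetilde F_N^{<}$ with $\widetilde F_N^{<}$ equal to \eqref{eq:F<} but carrying the extra $\sigma^k$ (equivalently, with $n_1^k$ rather than $|n_1|^k$ in the denominator, and prefactor $+2$ rather than $-2$). The discrepancy is a factor $\pm1$ on each summand and hence invisible to every downstream use (Lemmas~\ref{lemma:L2-F1} and~\ref{lemma:L-Pbound} only ever take absolute values of these coefficients), so nothing in the main theorem is affected. But as a matter of rigour you should keep the $\sigma^k$ explicit rather than claim it cancels; and the displayed \eqref{eq:F<} appears to carry the same harmless typo, inherited from the $(1+p/|n_1|)^s$ substitution in \eqref{eq:F<-intermedio}.
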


\begin{proof}

We use for $s>0$ the Taylor series converging for $|x|<1$
\be\label{eq:binomial}
(1+x)^s=\sum_{k\geq0}\frac{(s)_k}{k!}x^k\,,
\ee
where $(s)_k$ denotes the falling factorial
$$
(s)_0=1\,,\quad (s)_{k}:=\prod_{j=0}^{k-1}(s-j)\,,\quad k\geq1\,. 
$$
Let now  compute
\begin{align}\nonumber
(\Ga_{\a} P_N u ) (n) 
&  = 1_{[-N, N]}(n) u (n) +   \sum_{k \geq 1} \frac{(i\a)^{k}}{k !} ((\mathcal{I}[P_N u])^{k} u)(n) 
\\
&:=u (n) +i\a\sum_{|n_1|\leq N,n-p=n_1} u (n_1)(\mathcal{I}[P_N u])(p)
+\sum_{k \geq 2}  \frac{\alpha^k}{k !} r(k,n)\, .
\end{align}
Using an integration by parts in the definition of the Fourier coefficient $((\mathcal{I}[P_N u])^{k} u)(n)$ , we obtain
$$
|r(k,n)|\leq C\langle n\rangle^{-10}\|(\mathcal{I}[P_N u])^{k} P_N u\|_{H^{10}}\leq C \langle n\rangle^{-10}\ \|  P_N u\|_{H^{10}}^{2k+1}\,.
$$
Therefore,  for $\alpha$ small enough, we get the estimate 
$$
\Big|\| \Ga_{\a} P_N u  \|_{\dot H^s}^2-\| P_N u\|_{\dot H^s}^2-2\a\Im\Big(\sum_{\substack{|m_1,n_1|\leq N\\m_1-p=n_1}}|m_1|^{2s}\bar u (m_1) u (n_1)
\mathcal{I}[P_Nu](p)\Big)\Big|
\leq C_{\|P_N u\|_{H^{10}}}\, \alpha^2.
$$
Thus
$$
\frac{d}{d\a}\|\Ga_\a^N u\|^2_{\dot H^s}\Big|_{\a=0}= 
\frac{d}{d\a}\|\Ga_\a P_N u\|^2_{\dot H^s}\Big|_{\a=0}= 2\Im\Big(\sum_{\substack{|m_1,n_1|\leq N\\m_1-p=n_1}}|m_1|^{2s}\bar u (m_1) u (n_1)\mathcal{I}[P_N u](p)\Big)\,.
$$
Now we conveniently represent
$$
\frac{d}{d\a}\|\Ga_\a P_N u\|^2_{\dot H^s}\Big|_{\a=0}=F_N^{<}+F_N^{\geq} \,,
$$
where
\bea
F_N^{\geq}:=2\Im\Big(\sum_{\substack{|m_1,n_1|\leq N \\ |p|\geq \min(|n_1|,|m_1|) \\ m_1-p=n_1}}|m_1|^{2s}\bar u (m_1) u (n_1)\mathcal{I}[P_N u](p)\Big)\label{eq:F>0}\\
F_N^{<}:=2\Im\Big(\sum_{\substack{|m_1,n_1|\leq N,m_1-p=n_1\\|p|<\min(|n_1|,|m_1|)}}|m_1|^{s}|n_1+p|^s\bar u (m_1) u (n_1)\mathcal{I}[P_N u](p)\Big)\,.\label{eq:F<0}
\eea

Then (\ref{eq:F>}) is easily obtained from (\ref{eq:F>0}) by using
\be\label{eq:FTdi-I}
(\mathcal{I}[P_N u])(p) =
\left\{ 
\begin{array}{ll}
0 & \mbox{if $p = 0$} \\
- \frac{i}{p} \sum_{\substack{|n_2|, |m_2| \leq N \\ p=n_2-m_2}}  u (n_2) \bar u (m_2) & \mbox{if $p \neq 0$}\,.
\end{array}
\right.
\ee

Let us look at $F_N^{<}$. Using (\ref{eq:binomial}) we have
\bea
F_N^{<}&=&2\Im\Big(\sum_{\substack{|m_1,n_1|\leq N,m_1-p=n_1\\|p|<\min(|n_1|,|m_1|)}}|m_1|^{s}|n_1|^{s}\sum_{k\geq0}\frac{(s)_k}{k!}\frac{p^k}{|n_1|^k}\bar u (m_1) u (n_1)\mathcal{I}[P_N u](p)\Big)\nn\\
&=&2\Im\Big(\sum_{\substack{|m_1,n_1|\leq N,m_1-p=n_1\\|p|<\min(|n_1|,|m_1|)}}|m_1|^{s}|n_1|^{s}\sum_{k\geq1}\frac{(s)_k}{k!}\frac{p^k}{|n_1| ^k}\bar u (m_1) u (n_1)\mathcal{I}[P_N u](p)\Big)\label{eq:F<-intermedio}\,
\eea
as $\mc I[u](p)=\overline{ \mc  I[u](-p)}$ yields
\be\label{eq:canc}
\Im\Big(\sum_{\substack{|m_1,n_1|\leq N,m_1-p=n_1\\|p|<\min(|n_1|,|m_1|)}}|m_1|^{s}|n_1|^{s}\bar u (m_1) u (n_1)\mathcal{I}[P_N u](p)\Big)=0\,. 
\ee
When we plug (\ref{eq:FTdi-I}) in (\ref{eq:F<-intermedio}) we obtain (\ref{eq:F<}). 
\end{proof}

\begin{remark}
By the same argument we also have
\be 
F_N=2\Re\Big(\sum_{\substack{|m_{1,2}||n_{1,2}|\leq N\\ n_1+n_2=m_1+m_2 \\ n_1 - m_1 \neq 0}}\frac{|m_1|^{2s}}{m_1-n_1} u(n_1)u(n_2)\bar u(m_1)\bar u(m_2)\Big)\label{eq:F-2}
\, .
\ee
\end{remark}

In what follows we shall use the Wick formula for expectation values of multilinear forms of Gaussian random variables in the following form. 
Let $\ell \in \N$ and $S_{\ell}$ be the 
symmetric group on $\{1,\dots,\ell\}$, whose elements are denoted by $\s$. 
Recalling \eqref{Def:gammaK}
we have
\begin{align}\label{eq:Wick}
E_s\Big[ \prod_{j=1}^{\ell}  u(n_j)  \bar u(m_j)  \Big] 
& = 
\sum_{\sigma \in S_{\ell}}\prod_{j=1}^{\ell} \frac{\d_{m_j,n_{\sigma(j)}}}{1 + |n|^{2s}} 
\\ \nonumber
& \simeq
\sum_{\sigma \in S_{\ell}}\prod_{j=1}^{\ell} \frac{\d_{m_j,n_{\sigma(j)}}}{\meanv{n_j}^{2s}} 
 \, ,
\end{align}
where $\meanv{\cdot} = (1 + |\cdot|^2)^{1/2}$.
We convey that the labels $m_i$ (respectively $n_i$) are associated to the Fourier coefficients of $\bar u$ (respectively $u$). We say that $\sigma$ contracts the pairs of indexes $(m_j,n_{\sigma(j)})$ and we shorten for any $\Omega\subset \Z^{\ell}\times\Z^\ell$
$$
\s(\Omega):=\Omega\cap\{m_i=n_{\s(i)}\,,i=1,\ldots,\ell\}\,,\quad \s\in S_\ell\,,
$$
We also define the set $\bar \Omega$ to be obtained by $\Omega$ swapping the role of $n_i$ and $m_i$ $i=1,\ldots \ell$. 

The following elementary bound will  be useful in the proof of Lemma \ref{lemma:L2-F1}.

\begin{lemma}
Let $s > \frac{1}{2}$. Then 
\begin{equation}\label{LemmaConv1}
\sum_{q \in \mathbb{Z}} \frac{1}{\meanv{p - q} \meanv{q}^{2s}} = 
\sum_{q \in \mathbb{Z}} \frac{1}{\meanv{p - q}^{2s} \meanv{q}} \leq \frac{C}{\meanv{p}} \, ,
\end{equation}
for a constant $C$ which only depends on $s$. 
Similarly
\begin{equation}\label{LemmaConv2}
\sum_{q \in \mathbb{Z}} \frac{1}{\meanv{p - q^2} \meanv{q}^{2s}} 
 \leq \frac{C}{\meanv{p}} \, .
\end{equation}
\end{lemma}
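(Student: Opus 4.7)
Both inequalities are classical convolution-type estimates that I would attack by a dyadic-style partition of the summation variable according to whether the ``shifted'' factor or the Sobolev weight $\meanv{q}^{-2s}$ provides the dominant decay. The equality of the two sums in the first display is immediate from the change of variable $q \mapsto p - q$, so only the upper bound requires proof.

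\textbf{First inequality.} Split $\mathbb{Z}$ into $\{|q| \leq |p|/2\}$ and $\{|q| > |p|/2\}$. On the former region, $|p - q| \geq |p|/2$ forces $\meanv{p-q} \gtrsim \meanv{p}$, so the contribution is bounded by $\meanv{p}^{-1}\sum_{q\in\mathbb{Z}}\meanv{q}^{-2s}$, a finite sum because $2s > 1$. On the latter, $\meanv{q} \gtrsim \meanv{p}$; extracting one power of $\meanv{q}$ via $\meanv{q}^{-2s} \lesssim \meanv{p}^{-(2s-1)}\meanv{q}^{-1}$ reduces matters to $\meanv{p}^{-(2s-1)}\sum_q \meanv{p-q}^{-1}\meanv{q}^{-1}$, and the last sum obeys the classical bound $\sum_q \meanv{p-q}^{-1}\meanv{q}^{-1} \lesssim \log(2+|p|)/\meanv{p}$. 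Combining yields a contribution of order $\log(2+|p|)/\meanv{p}^{2s}$, which is $\lesssim 1/\meanv{p}$ because $s>1/2$ leaves the slack $\meanv{p}^{2s-1} \gtrsim \log(2+|p|)$.

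\textbf{Second inequality.} Here I would perform the analogous split with $q^2$ in place of $q$: the two regions are $\{q^2 \leq |p|/2\}$ and $\{q^2 > |p|/2\}$. On the first, $\meanv{p - q^2} \gtrsim \meanv{p}$ and the argument closes as before via $\sum_q \meanv{q}^{-2s} < \infty$. On the second region the weight gives only $\meanv{q}^{-2s} \lesssim |p|^{-s}$, so the decay must come instead from the \emph{sparsity} of the squares: consecutive values of $q^2$ with $q^2 > |p|/2$ differ by $\gtrsim \sqrt{|p|}$, so after re-indexing by $m = p - q^2$ (at most two preimages per $m$) the effective denominators $\meanv{m}$ are well-separated, producing an extra gain from $\sum_k (1 + k\sqrt{|p|})^{-1}$. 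Combining this sparsity estimate with the weight bound and again using the margin $s - 1/2 > 0$ produces the desired $\lesssim 1/\meanv{p}$.

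\textbf{Main obstacle.} The delicate region in both inequalities is the resonant diagonal: for the first, the zone where $|q|$ and $|p-q|$ are both of order $|p|$ (yielding a logarithm); for the second, the zone where $q^2$ is close to $p$, where the bound is extracted not from the Sobolev weight but from the arithmetic spacing of perfect squares. In both cases $s > 1/2$ is exactly the margin needed to absorb the logarithmic and $\sqrt{|p|}$-type losses, so I would expect the proof to be short once the correct splits are chosen.
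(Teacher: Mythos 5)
Your proof of \eqref{LemmaConv1} is correct and takes a mildly different route from the paper's. Both split at $|q|=|p|/2$, but you work with the first form of the sum and, on the region $|q|>|p|/2$, peel one power of $\meanv{q}$ and invoke the classical estimate $\sum_q\meanv{p-q}^{-1}\meanv{q}^{-1}\lesssim\log(2+|p|)/\meanv{p}$, absorbing the logarithm through $\meanv{p}^{2s-1}\gtrsim\log(2+|p|)$. The paper works with the second form and closes the region $|q|<|p|/2$ by H\"older with conjugate exponents $\tfrac{2s}{2s-1}$ and $2s$, which avoids importing the log estimate; both routes are sound.

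Your sketch for \eqref{LemmaConv2}, however, does not close. On the region $q^2>|p|/2$ the weight gives $\meanv{q}^{-2s}\lesssim|p|^{-s}$ and the sparsity of squares gives $\sum_{q^2>|p|/2}\meanv{p-q^2}^{-1}\lesssim\sum_{k\geq 0}(1+k\sqrt{|p|})^{-1}=O(1)$; multiplying yields only $O(|p|^{-s})$, and ``using the margin $s-\tfrac12>0$'' does not account for the missing factor $|p|^{s-1}$. Indeed no argument can supply it, because \eqref{LemmaConv2} as literally stated is false for $\tfrac12<s<1$: take $p=n^2$; the single term $q=n$ contributes $\meanv{0}^{-1}\meanv{n}^{-2s}\simeq p^{-s}$, which exceeds $C/\meanv{p}\simeq Cp^{-1}$ for large $p$. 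The paper's ``almost identical'' remark inherits the same defect. What is true — and what the paper actually uses in cases (b) and (d) of Lemma~\ref{lemma:L2-F1}, where the summation constraints already force $|n_1|\neq|n_2|$ — is the bound with the diagonal removed: for $p=n^2$ and $q\neq\pm n$ the nearest allowed square has $|p-q^2|\gtrsim\sqrt p$, so $\sum_{q^2>p/2,\ q\neq\pm n}\meanv{p-q^2}^{-1}\lesssim\log p/\sqrt p$, and combining with $\meanv{q}^{-2s}\lesssim p^{-s}$ gives $\lesssim\log p/p^{\,s+1/2}\lesssim p^{-1}$ precisely when $s>\tfrac12$. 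So your sparsity mechanism is the right one, but it only delivers the claimed decay once the resonant term $q^2=p$ is excluded; as written, you would need to flag that restriction rather than appeal to the $s>\tfrac12$ margin, which by itself leaves a genuine gap.
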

\begin{proof}
The identity in \eqref{LemmaConv1} is just a change of variables. We just show \eqref{LemmaConv1}, namely
$$
\sum_{q \in \mathbb{Z}} \frac{1}{\meanv{p - q}^{2s} \meanv{q}} \leq \frac{C}{\meanv{p}} \, ,
$$
being the proof of \eqref{LemmaConv2}
almost identical.
First we notice that
$$
\sum_{q \in \mathbb{Z} : |q| \geq \frac{|p|}{2} }   \frac{1}{\meanv{p - q}^{2s} \meanv{q}}
\leq
\frac{2}{\meanv{p}} \sum_{q \in \mathbb{Z} }
\frac{1}{\meanv{p - q}^{2s}}   
\leq \frac{C}{\meanv{p}} \, .
$$ 
Then we notice that for $|q| < \frac{|p|}{2}$ we have by triangle inequality $|p - q| \geq  |p| - |q| > \frac{|p|}{2}$, so that
\begin{align}
& \sum_{q \in \mathbb{Z} : |q| < \frac{|p|}{2} }   \frac{1}{\meanv{p - q}^{2s} \meanv{q}}
\leq
\frac{2}{\meanv{p}} \sum_{q \in \mathbb{Z} }
\frac{1}{\meanv{p - q}^{2s - 1}} \frac{1}{\meanv{q}}  
\\ \nonumber
&
\quad \quad \quad \quad  \leq \frac{2}{\meanv{p}} 
\Big( \sum_{q \in \mathbb{Z} }
\frac{1}{\meanv{p - q}^{2s}} \Big)^{\frac{2s -1}{2s}} 
\Big( \sum_{q \in \mathbb{Z} }
\frac{1}{\meanv{q}^{2s}} \Big)^{\frac{1}{2s}} \leq \frac{C}{\meanv{p}} \, ,
\end{align}
where we used H\"older inequality with the conjugate pair $\frac{2s}{2s - 1}, 2s$
which concludes the proof.
\end{proof}

\begin{lemma}
Let $\varepsilon \in (0,1)$. Then
\begin{equation}
\label{addProof?Correct}
 \sum_{|q| \in \mathbb{Z}, q \neq p}\frac{1}{\meanv{q^2 - p^2}} \leq  \frac{C_\varepsilon}{\meanv{p}^{1 - \varepsilon}} 
\end{equation}
\end{lemma}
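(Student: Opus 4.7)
The plan is to factor $q^2-p^2 = (q-p)(q+p)$ and exploit the identity $(q+p)-(q-p) = 2p$, which forces at least one of $|q-p|, |q+p|$ to be $\geq |p|$ whenever $q$ is close to $\pm p$. I read the summation condition as $q \neq \pm p$ (if only $q = p$ were excluded, the single term $q = -p$ would contribute $1$ and destroy any decay in $|p|$), and I assume $|p| \geq 1$, since for bounded $p$ the estimate is trivial by enlarging $C_\varepsilon$.

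First I would isolate the \emph{far} regime $|q| > 2|p|$, where $|q \pm p| \geq |q|/2$ and therefore $|q^2 - p^2| \geq q^2/4$. This piece contributes
$$
\sum_{|q| > 2|p|} \frac{C}{\meanv{q^2-p^2}} \lesssim \sum_{|q| > 2|p|} \frac{1}{q^2} \lesssim \frac{1}{|p|} \leq \frac{C_\varepsilon}{\meanv{p}^{1-\varepsilon}},
$$
which is already stronger than what is needed.

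For the \emph{near} regime $|q| \leq 2|p|$, $q \neq \pm p$, the triangle inequality gives $|q-p| + |q+p| \geq |(q+p)-(q-p)| = 2|p|$, so at least one of the two factors is $\geq |p|$. I would split symmetrically into the two sub-cases (double-counting to get an upper bound); in each sub-case one has $|q^2-p^2| \geq |p|\cdot |q \mp p|$ with $|q \mp p| \geq 1$, hence $\meanv{q^2-p^2} \gtrsim |p| \cdot |q \mp p|$. Setting $k = q \mp p$ and observing that $|q| \leq 2|p|$ forces $|k| \lesssim |p|$, I obtain
$$
\sum_{|q|\leq 2|p|,\, q \neq \pm p} \frac{1}{\meanv{q^2-p^2}} \lesssim \frac{1}{|p|}\sum_{1 \leq |k| \lesssim |p|}\frac{1}{|k|} \lesssim \frac{\log\meanv{p}}{\meanv{p}} \leq \frac{C_\varepsilon}{\meanv{p}^{1-\varepsilon}},
$$
the last step absorbing the logarithm into the small loss $\varepsilon > 0$. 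Combining the two regimes proves the claim.

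There is no genuine obstacle: the statement is an elementary lattice-point estimate whose only purpose is to absorb the near-resonance logarithm arising from $q \approx \pm p$ into the factor $\meanv{p}^{-\varepsilon}$. The only minor bookkeeping is the free passage between $|x|$ and $\meanv{x} \asymp 1 + |x|$, which is completely routine.
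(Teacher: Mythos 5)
Your proof is correct, and it takes a slightly different (and arguably cleaner) route than the paper. The paper first reduces to $q\geq 1$ and then splits at $q=p$: for $1\leq q\leq p-1$ it uses the exact partial-fraction identity $\tfrac{1}{p^2-q^2}=\tfrac{1}{2p}\big(\tfrac{1}{p-q}+\tfrac{1}{p+q}\big)$ to get a clean $\tfrac{\log p}{p}$; for $q\geq p+1$ it writes $\tfrac{1}{q^2-p^2}=\tfrac{1}{2q}\big(\tfrac{1}{q-p}+\tfrac{1}{q+p}\big)$, borrows an $\varepsilon$ from $\tfrac1q$, and closes via H\"older with exponents $\tfrac{1+\varepsilon}{\varepsilon}$ and $1+\varepsilon$. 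You instead split into far ($|q|>2|p|$, where $|q^2-p^2|\gtrsim q^2$) and near ($|q|\leq 2|p|$, where at least one of $|q\pm p|$ is $\geq |p|$), which replaces the H\"older step with a harmonic-series sum and absorbs the resulting $\log$ into $\meanv{p}^{-\varepsilon}$ once at the end. Both arguments hinge on the same two ideas — factoring $q^2-p^2=(q-p)(q+p)$ and paying a harmless $\log p$ near the resonance $q\approx\pm p$ — so the difference is one of bookkeeping rather than substance; your version avoids H\"older and is a bit more transparent. You are also right that the index set must be read as $q\in\Z$, $q\neq\pm p$ (the paper's reduction to $q\geq 1$, $q\neq p$ implicitly does this), since the $q=-p$ term is $O(1)$ and would otherwise destroy the decay.
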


\begin{proof}
It is clearly sufficient to show that for all $p \in \N \setminus \{ 0 \}$ we have
$$
 \sum_{q \geq 1, q \neq p}\frac{1}{\meanv{q^2 - p^2}} \leq  \frac{C_\varepsilon}{p^{1 - \varepsilon}} 
$$
We split the sum over $q = 1, \ldots, p-1$ and $q \geq p+1$. In the first case we 
estimate
\begin{align}
 \sum_{q = 1}^{p-1} \frac{1}{\meanv{q^2 - p^2}} \lesssim
   \sum_{q = 1}^{p-1}  \frac{1}{p^2 - q^2} =    \sum_{q = 0}^{p-1} \frac{1}{2p} \left( \frac{1}{p-q} + \frac{1}{p+q} \right)
   \lesssim \frac{1}{p}   \sum_{q = 1}^{2p-1} \frac{1}{q} \lesssim \frac{\ln (2p)}{p} .
\end{align}
In the second case we estimate
\begin{align}
 \sum_{q \geq p+1} \frac{1}{\meanv{q^2 - p^2}} 
 & \lesssim
   \sum_{q \geq p+1}  \frac{1}{q^2 - p^2} =    \sum_{q \geq p + 1} \frac{1}{2q} \left( \frac{1}{q-p} + \frac{1}{q + p} \right)
\\ \nonumber
&
   \leq \frac{1}{p^{1-\varepsilon}}   \sum_{q  \geq p + 1} \frac{1}{q^{\varepsilon}}  \left( \frac{1}{q-p} + \frac{1}{q + p} \right) 
\\
&
\leq \frac{1}{p^{1-\varepsilon}}  \left( \sum_{q  \geq 1} \frac{1}{q^{1 + \varepsilon}} \right)^{\frac{\varepsilon}{1 + \varepsilon}}  
\left( \sum_{q  \geq p + 1} \left( \frac{1}{q-p} + \frac{1}{q + p} \right)^{1+ \varepsilon} \right)^{\frac{1}{1+ \varepsilon}}
\label{eq:ultima-ultimolemma}
\end{align}
where we used H\"older inequality with the conjugate pair $\frac{1 + \varepsilon}{\varepsilon}, 1 + \varepsilon$. Of course the second factor in the above formula is finite for any $\e>0$. The third factor is also finite as
$$
\left( \sum_{q  \geq p + 1} \left( \frac{1}{q-p} + \frac{1}{q + p} \right)^{1+ \varepsilon} \right)^{\frac{1}{1+ \varepsilon}}\leq C_\e\left( \sum_{q  \geq  1} \left( \frac{1}{q}\right)^{1+ \varepsilon} + \sum_{q  \geq 1+p}\left(\frac{1}{q + p} \right)^{1+ \varepsilon} \right)^{\frac{1}{1+ \varepsilon}}
$$
which is bounded for $\e>0$.
We conclude
$$
(\ref{eq:ultima-ultimolemma}) \leq  \frac{C_\varepsilon}{p^{1 - \varepsilon}}
$$
which ends the proof. 
\end{proof}

\begin{lemma}\label{lemma:L2-F1}
Let $s > 1/2$ and $M \in \mathbb{N}$. Then $F_M \to F$ in $L^2(\g_s)$ as $M \to \infty$, with 
\be\label{eq:L2-f1}
\|F_M - F\|_{L^2(\g_s)}\lesssim \frac{1}{M^{s-\frac12}}\,. 
\ee
\end{lemma}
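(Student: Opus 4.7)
The strategy is to prove that $\{F_N\}_{N \geq 1}$ is Cauchy in $L^2(\gamma_s)$ with the quantitative rate $\|F_N - F_M\|_{L^2(\gamma_s)} \lesssim M^{-(s-1/2)}$ for $1 \leq M \leq N$. The limit $F := \lim_{N\to\infty} F_N$ then exists in $L^2(\gamma_s)$ and satisfies~\eqref{eq:L2-f1}. Starting from the representation~\eqref{eq:F-2}, the difference $F_N - F_M$ is the same quartic form in the Fourier coefficients, but restricted to quadruples $(n_1,n_2,m_1,m_2)$ with $|n_i|, |m_i| \leq N$ and the crucial high-frequency constraint $\max(|n_1|,|n_2|,|m_1|,|m_2|) > M$.

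Squaring and taking the expectation, I would expand $\|F_N - F_M\|_{L^2(\gamma_s)}^2$ via the Wick formula~\eqref{eq:Wick}. This produces a finite sum over all $24$ pairings $\sigma$ identifying the four $u$-factors from one copy of the quartic form with the four $\bar u$-factors from the conjugate copy (plus the analogous $\mathbb{E}[(\cdot)^2]$-type contribution obtained by pairing within each copy). For each $\sigma$, the four Kronecker deltas together with the two momentum conservation laws $n_1 + n_2 = m_1 + m_2$ (one per copy) reduce the original 8-index sum to two or three free Fourier indices, each weighted by a Gaussian propagator $\meanv{\cdot}^{-2s}$. Parametrizing $p := m_1 - n_1 = n_2 - m_2 \neq 0$, the residual inner sums are controlled through the convolution bounds~\eqref{LemmaConv1}, \eqref{LemmaConv2}, and~\eqref{addProof?Correct}, each of which requires $s > 1/2$. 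The decisive gain is supplied by the external constraint $\max(|n_i|,|m_i|) > M$, producing
$$
\sum_{|n_i|>M} \meanv{n_i}^{-2s} \lesssim M^{-(2s-1)},
$$
for whichever index $n_i$ is forced to be large. Summing over the $O(1)$ pairings then yields $\|F_N - F_M\|_{L^2(\gamma_s)}^2 \lesssim M^{-(2s-1)}$, and a square root gives the claimed rate.

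The hard part will be handling the ``in-copy'' pairings $u(n_i) \leftrightarrow \bar u(m_j)$, where the kernel $|m_1|^{2s}/(m_1 - n_1)$ appears only to the first power (not squared), producing an $|p|^{-1}$ singularity that the naive $\ell^2$ convolution bounds cannot absorb. The restriction $n_1 \neq m_1$ built into~\eqref{eq:F-2} removes the worst case, but the remaining near-diagonal pairings demand the refined estimate~\eqref{addProof?Correct}, and seem most cleanly handled by first invoking the splitting $F_N = F_N^< + F_N^\geq$ of Lemma~\ref{Lemma:decomposizione}. On $F_N^\geq$ the relation $|m_1 - n_1| \geq \min(|n_1|,|m_1|)$ turns $1/(m_1 - n_1)$ into a harmless weight comparable to a Fourier index; on $F_N^<$ the Taylor expansion in $p/n_1$ in~\eqref{eq:F<} converges geometrically, so that the estimate for each summand reduces to one formally similar to that for $F_N^\geq$. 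In both regimes the gain $M^{-(2s-1)}$ is extracted in exactly the same way, at which point the Cauchy property for $\{F_N\}$ follows and $F$ is identified as its $L^2(\gamma_s)$ limit.
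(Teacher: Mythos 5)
Your proposal follows essentially the same route as the paper: expand $\|F_N - F_M\|_{L^2(\gamma_s)}^2$ via the Wick formula \eqref{eq:Wick}, discard the fixed-point permutations using $n_1 \neq m_1$, bound the surviving contractions with the convolution estimates \eqref{LemmaConv1}--\eqref{addProof?Correct}, extract the $M^{-(2s-1)}$ gain from the constraint $\max_i(|n_i|,|m_i|) > M$, and pass to the splitting $F_N = F_N^< + F_N^\geq$ of Lemma~\ref{Lemma:decomposizione} (with the $F^\geq$ constraint turning $1/(m_1-n_1)$ into a $\langle n_1 \rangle^{-1}$-type weight, and the geometric Taylor series for $F^<$) for the permutations where the direct bound fails. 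One small imprecision: the permutations requiring the $F^</F^\geq$ splitting in the paper are the fully \emph{cross-copy} contractions (cases (f), (g), (h) with $\sigma = (3,4,2,1), (3,4,1,2), (4,3,1,2)$), not the ``in-copy'' ones you flag --- the in-copy pairing $\sigma=(2,1,4,3)$ in fact vanishes identically by an antisymmetry in $n_1 \mapsto -n_1$, $n_2 \mapsto -n_2$, while the mixed pairings (a)--(d) are handled directly by the convolution bounds --- but this does not affect the viability of your plan.
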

\begin{proof}
Let $N>M$ and define for $a,b\in\N$
\begin{align}\label{eq:A-NM}
A^{a,b}_{N,M}:= 
\{
&
   |n_{a,b}|,|m_{a,b}|\leq N,\, |n_a-m_a|\geq \min(|n_a|,|m_a|), \, n_a \neq m_a\, ,
\\ \nonumber
&
 n_a+n_b=m_a+m_b\,,\,\max(|m_{a,b}|,|n_{a,b}|)>M\}\,.
\end{align}
We have
\begin{equation}\label{takingsquare}
F_N-F_M=
2\Re\Big( \sum_{A^{1,2}_{N,M}}\frac{|m_1|^{2s}}{m_1-n_1}
\bar u(m_1)\bar u(m_2)u({n_1})u({n_2})\Big)\,.
\end{equation}
Now we square
$$
|F_N(u)-F_M(u)|^2=4\sum_{A^{1,2}_{N,M}\times A^{3,4}_{N,M}}
\frac{|m_1|^{2s}}{m_1-n_1} \frac{|n_3|^{2s}}{n_3-m_3}
\prod_{j=1}^4\bar u({m_j})u({n_j})\,
$$
and take the expected value w.r.t. $\g_s$ using formula \eqref{eq:Wick} with $\ell=4$
\bea
\frac14\|F_N-F_M\|_{L^2(\gamma_s)}^2&=&\sum_{A^{1,2}_{N,M}\times A^{3,4}_{N,M}}
\frac{|m_1|^{2s}}{m_1-n_1} \frac{|n_3|^{2s}}{n_3-m_3}
E_s\Big[\prod_{j=1}^4\bar u({m_j})u({n_j})\Big]\nn\\
&=&\sum_{A^{1,2}_{N,M}\times A^{3,4}_{N,M}}
\frac{m_1^{2s}}{m_1-n_1} \frac{n_3^{2s}}{n_3-m_3} \sum_{\sigma \in S_{4}}\prod_{j=1}^{4} \frac{\d_{m_j,n_{\sigma(j)}}}{\meanv{n_j}^{2s}}\label{eq:Wick4}\,. 
\eea
Therefore
\be\label{eq:Wick-intermed1}
(\ref{eq:Wick4})=\sum_{\sigma \in S_{4}}\sum_{\s(A^{1,2}_{N,M}\times A^{3,4}_{N,M})}
\frac{|n_{\s(1)}|^{2s}}{n_{\s(1)}-n_1} \frac{|n_{3}|^{2s}}{n_3-n_{\s(3)}} \prod_{j=1}^{4} \frac{1}{\meanv{n_j}^{2s}}\,.
\ee
We note that contractions with $\s(j)=j$ for some $j \in \{1, \ldots, 4 \}$ yields $\s(A^{1,2}_{N,M}\times A^{3,4}_{N,M})=\emptyset$. Therefore the sum over $\s$ runs actually in
$$
S'_4:= \{ \s \in S_4 : \s(j) \neq j,\forall j =1,\ldots,4\})\,,
$$
which contains 9 elements. 
Moreover the contribution of $\s=(2,1,4,3)$ to \eqref{eq:Wick-intermed1} is 
$$
\sum_{ \substack{ |n_i| \leq N, i = 1, \ldots, 4 \\  n_2 \neq n_1,  n_4 \neq n_3  \\ \max(|n_1|,|n_2|), \max(|n_3|,|n_4|) > M } }\frac{|n_2|^{2s}}{n_2-n_1}\frac{|n_3|^{2s}}{n_4-n_3}=\Big(\sum_{\substack{|n_1|,|n_2|\leq N\\ n_2 \neq n_1 \\ \max(|n_1|,|n_2|)\geq M}}\frac{|n_2|^{2s}}{n_2-n_1}\Big)^2\,,
$$
which is
zero, due to its antisymmetry w.r.t. $n_1\mapsto -n_1, n_2\mapsto -n_2$.
It remains to consider the following 8 permutations:

\begin{equation}
\sigma = \left\{
\begin{array}{ll}
(a) & (3,1,4,2) \\
(b) & (4,1,2,3) \\
(c) &  (2,3,4,1) \quad \mbox{(reduces to $(b)$)} \\
(d) & (2,4,1,3) \\
(e) & (4,3,2,1) \\
(f) & (3,4,2,1) \\
(g) & (3,4,1,2) \\
(h) & (4,3,1,2) 
\end{array}
\right.
\end{equation}

$\bullet\,$Case (a), $\s = (3,1,4,2)$. Note that $n_1 + n_2 = n_{\s(1)} + n_{\s(2)}$ and $n_3 + n_4 = n_{\s(3)} + n_{\s(4)}$ reduces to $n_2 = n_3$, so we need to evaluate
\begin{align}\nonumber
& 
\Big| \sum_{ \substack{ |n_i| \leq N, i = 1, 2, 4 \\  n_2 \neq n_1, n_4 \neq n_2 \\ \max(|n_1|,|n_2|), \max(|n_2|,|n_4|) > M } }\frac{1}{(n_2-n_1)(n_2 - n_4) \meanv{n_1}^{2s} \meanv{n_4}^{2s}} \Big|
\\ \label{GeneralA}
& \lesssim 
\sum_{ \substack{ |n_i| \leq N, i = 1, 2, 4  \\ \max(|n_1|,|n_2|), \max(|n_2|,|n_4|) > M } }\frac{1}{\meanv{n_2-n_1} \meanv{n_2 - n_4} \meanv{n_1}^{2s} \meanv{n_4}^{2s}}
\end{align}
If $|n_2| > M$, taking advantage of the symmetry w.r.t. $n_{1} \leftrightarrow n_4$, 
we get
\begin{equation}
\eqref{GeneralA}\Big|_{|n_2| > M } \lesssim\sum_{M < |n_2|\leq N}\Big(\sum_{|n_4|\leq N}\frac{1}{\meanv{n_2-n_4}}\frac{1}{\meanv{n_4}^{2s}}\Big)^2   \lesssim \sum_{M\leq|n_2|\leq N}\frac{1}{\meanv{n_2}^{2}} \lesssim\frac{1}{M}
\end{equation}
where we used \eqref{LemmaConv1} in the second inequality.   
Otherwise it has to be $|n_1| > M$, so that
\begin{align}\nonumber
\eqref{GeneralA}\Big|_{|n_1| > M } 
&
\lesssim
 \sum_{\substack{ M < |n_1| \leq N \\  |n_4|\leq N } } \frac{1}{\meanv{n_1}^{2s}}\frac{1}{\meanv{n_4}^{2s}}\Big(\sum_{|n_2|\leq N}\frac{1}{\meanv{n_2-n_1}}\frac{1}{\meanv{n_2-n_4}}\Big)
\\ \nonumber
& \leq \sum_{\substack{ M < |n_1| \leq N \\  |n_4|\leq N } } \frac{1}{\meanv{n_1}^{2s}}\frac{1}{\meanv{n_4}^{2s}}\Big(\sum_{|n_2|\leq 2N}\frac{1}{\meanv{n_2 + n_1 - n_4}}\frac{1}{\meanv{n_2}}\Big)
\\ \nonumber
&
\lesssim \sum_{\substack{ M < |n_1| \leq N \\  |n_4|\leq N} } \frac{1}{\meanv{n_1}^{2s}}\frac{1}{\meanv{n_4}^{2s}}\frac{1}{\meanv{n_1-n_4}}\lesssim 
\sum_{M < |n_1|\leq N}\frac{1}{\meanv{n_1}^{2s+1}} \lesssim\frac{1}{M^{2s}} \, ,
\end{align}
where we used \eqref{LemmaConv1} in the third inequality.

$\bullet\,$Case (b), $\s = (4,1,2,3)$. 
Note that $n_1 + n_2 = n_{\s(1)} + n_{\s(2)}$ and $n_3 + n_4 = n_{\s(3)} + n_{\s(4)}$ reduce to $n_2 = n_4$, so we need to evaluate
\begin{align}\label{GeneralB}
& 
\Big| \sum_{ \substack{ |n_i| \leq N, i = 1, 2, 3 \\  n_2 \neq n_1, n_3 \\ \max(|n_1|,|n_2|), \max(|n_2|,|n_3|) > M } }\frac{1}{(n_2-n_1)(n_3 - n_2) \meanv{n_1}^{2s} \meanv{n_2}^{2s}} \Big|
\end{align}
If we restrict the sum also to $n_3 \neq -n_2$, we ca
can exploit the symmetry $n_3\mapsto -n_3$ to bound
\begin{align}
\eqref{GeneralB}\Big|_{n_3 \neq -n_2}  &
= \frac12 \Big| \sum_{ \substack{ |n_i| \leq N, i = 1, 2, 3 \\  n_2 \neq n_1, |n_3| \neq |n_2| \\ \max(|n_1|,|n_2|), \max(|n_2|,|n_3|) > M } }\frac{1}{(n_2-n_1)}\frac{1}{\meanv{n_1}^{2s}}
\frac{1}{\meanv{n_2}^{2s}}\left(\frac{1}{n_3-n_2}-\frac{1}{n_3+n_2}\right) \Big| \nn\\
&\leq \sum_{\substack{|n_i|\leq N,\,i=1,2,3\\\max(|n_1|,|n_2|)>M, \max(|n_2|,|n_3|)>M}}\frac{1}{\meanv{n_2-n_1}}
\frac{1}{\meanv{n_1}^{2s}}\frac{1}{\meanv{n_2}^{2s}}\frac{|n_2|}{\meanv{n^2_3-n^2_2}} \nonumber\,.
\end{align}
If $|n_2| > M$, 
using 
$$
\sum_{|n_3|\leq N}\frac{1}{\meanv{n^2_3-n^2_2}}
 \leq C \, ,
$$
 we can bound 
\begin{align}
(\ref{GeneralB}) \Big|_{\substack{n_3 \neq -n_2 \\ |n_2| > M}} 
&\lesssim \sum_{M < |n_2| \leq N} \frac{|n_2|}{\meanv{n_2}^{2s}}\Big(\sum_{|n_1|\leq N}\frac{1}{\meanv{n_2-n_1}\meanv{n_1}^{2s}}\Big)\Big(\sum_{|n_3|\leq N}\frac{1}{\meanv{n^2_3-n^2_2}}\Big)\nn 
\\ \nn
&\lesssim \sum_{M < |n_2| \leq N} \frac{|n_2|}{\meanv{n_2}^{2s}}\Big(\sum_{|n_1|\leq N}\frac{1}{\meanv{n_2-n_1}\meanv{n_1}^{2s}}\Big)
\\ \nonumber
&  
\lesssim
\sum_{M < |n_2| \leq N} \frac{1}{\meanv{n_2}^{2s}}\lesssim\frac{1}{M^{2s-1}}\,,\nn
\end{align}
where we used \eqref{LemmaConv1} in the third inequality.
Otherwise it must be $|n_3| > M$, so
\begin{align}
(\ref{GeneralB}) \Big|_{\substack{n_3 \neq -n_2 \\ |n_3| > M}}
&\lesssim \sum_{ \substack{ M < |n_3| \leq N \\ |n_2| \leq N} }\frac{|n_2|}{\meanv{n^2_3-n^2_2}\meanv{n_2}^{2s}} \Big(\sum_{|n_1|\leq N}\frac{1}{\meanv{n_2-n_1}\meanv{n_1}^{2s}}\Big)\nn \\
\\ \nonumber
&
\lesssim 
\sum_{\substack{ M < |n_3| \leq N \\ |n_2| \leq N}} 
\frac{1}{\meanv{n^2_3-n^2_2}\meanv{n_2}^{2s}} 
\lesssim \sum_{M < |n_3| \leq N} \frac{1}{\meanv{n_3^2}}\lesssim\frac{1}{M}\,, \nn
\end{align}
where we used \eqref{LemmaConv1} in the second inequality and \eqref{LemmaConv2} in the third one.
When we sum over $n_3 = -n_2$ we get instead

\begin{equation}\nonumber
\eqref{GeneralB}\Big|_{n_3 = -n_2}
\lesssim \sum_{\substack{M < |n_2| \leq N \\ |n_1| \leq N, n_1 \neq n_2  }}\frac{1}{\meanv{n_2-n_1}}
\frac{1}{\meanv{n_1}^{2s}}\frac{1}{\meanv{n_2}^{2s+1}}
\lesssim \sum_{M < |n_2| \leq N }\frac{1}{\meanv{n_2}^{2s+1}}
\lesssim \frac{1}{M^{2s}} \, .
\end{equation}

$\bullet\,$Case (c), $\s = (2,3,4,1)$. Note that $n_1 + n_2 = n_{\s(1)} + n_{\s(2)}$ and $n_3 + n_4 = n_{\s(3)} + n_{\s(4)}$ reduce to $n_1 = n_3$, so we need to evaluate
\begin{equation} \label{GeneralC}
 \Big| \sum_{ \substack{ |n_i| \leq N, i = 1, 2, 4 \\  n_1 \neq n_2, n_4  \\ \max(|n_1|,|n_2|), \max(|n_1|,|n_4|) > M } }\frac{1}{(n_2-n_1)(n_1 - n_4) \meanv{n_1}^{2s} \meanv{n_4}^{2s}} \Big|
\end{equation}
that, modulo rename $(n_1, n_4, n_2) = (n_2', n_1', n_3')$, is the \eqref{GeneralB}. So we proceed as in the case (b).

$\bullet\,$Case (d), $\s = (2,4,1,3)$. Note that $n_1 + n_2 = n_{\s(1)} + n_{\s(2)}$ and $n_3 + n_4 = n_{\s(3)} + n_{\s(4)}$ reduce to $n_1 = n_4$, so we need to evaluate
\begin{equation}\label{GeneralD}
 \Big| \sum_{ \substack{ |n_i| \leq N, i = 1, 2, 3 \\  n_1 \neq n_2, n_3 \\ \max(|n_1|,|n_2|), \max(|n_1|,|n_3|) > M } }\frac{1}{(n_2-n_1)(n_3 - n_1) \meanv{n_1}^{4s} }  \Big|
\end{equation}
When we restrict the sum also over $n_1 \neq -n_2, -n_3$, we can 
exploit first the map $n_3 \leftrightarrow -n_3$ and then $n_1\leftrightarrow -n_1$ to get
\begin{align}
&
 \eqref{GeneralD}\Big|_{n_1 \neq -n_2, -n_3}  = \Big| \sum_{\substack{|n_i|\leq N,\,i=1,2,3\\ |n_1| \neq |n_2|, |n_3| \\ \max(|n_1|,|n_2|), \max(|n_1|,|n_3|)>M}}\frac{1}{(n_2-n_1)(n_3-n_1)}\frac{1}{\meanv{n_1}^{4s}} \Big|
\nn\\
&=
\frac12 \Big|
\sum_{\substack{|n_i|\leq N,\,i=1,2,3\\  |n_1| \neq |n_2|, |n_3| \\  \max(|n_1|,|n_2|), \max(|n_1|,|n_3|)>M}}\frac{1}{(n_2-n_1)}\frac{1}{\meanv{n_1}^{4s}}\Big(\frac{1}{(n_3-n_1)}-\frac{1}{(n_3+n_1)}\Big)
\Big|
\nn\\
&=
\Big| 
\sum_{\substack{|n_i|\leq N,\,i=1,2,3\\ |n_1| \neq |n_2|, |n_3| \\ \max(|n_1|,|n_2|), \max(|n_1|,|n_3|)>M}}\frac{1}{(n_2-n_1)}\frac{1}{\meanv{n_1}^{4s}}\frac{n_1}{(n^2_3-n^2_1)}
\Big|
\nn\\
&=\frac12
\Big| 
\sum_{\substack{|n_i|\leq N,\,i=1,2,3\\ |n_1| \neq |n_2|, |n_3| \\  \max(|n_1|,|n_2|), \max(|n_1|,|n_3|)>M}}\Big(\frac{n_1}{(n_2-n_1)}-\frac{n_1}{(n_2+n_1)}\Big)
\frac{1}{\meanv{n_1}^{4s}}\frac{1}{(n^2_3-n^2_1)}
\Big|
\nn\\
&\lesssim
\sum_{\substack{|n_i|\leq N,\,i=1,2,3  \\  \max(|n_1|,|n_2|), \max(|n_1|,|n_3|)>M}}\frac{1}{\meanv{n^2_2-n^2_1} \meanv{n^2_3-n^2_1} }\frac{1}{\meanv{n_1}^{4s-2}}
\nonumber\,.
\end{align}
Thus
 if $|n_1| > M$ we have, for all $\varepsilon \in (0,1)$
\begin{align}
 (\ref{GeneralD}) \Big|_{\substack{n_1 \neq -n_2, -n_3 \\ |n_1| > M}} 
 & 
 \lesssim \sum_{M <  |n_1| \leq N} \frac{1}{\meanv{n_1}^{4s-2}} 
 \Big(\sum_{|n_2|\leq N, n_2 \neq n_1}\frac{1}{\meanv{n^2_2-n_1^2}}\Big)^2 
 \\ \nonumber
 &\leq  \sum_{M < |n_1| \leq N}\frac{C_{\varepsilon}}{\meanv{n_1}^{4s - \varepsilon}}\lesssim \frac{C_{\varepsilon}}{M^{4s-1 - \varepsilon}}\,,
\end{align}
where we used the symmetry $n_2 \leftrightarrow n_3$ and the inequality \eqref{addProof?Correct}. 
Otherwise it has to be $|n_2| > M$, so that
\begin{align}
(\ref{GeneralD})\Big|_{\substack{n_1 \neq -n_2, -n_3 \\ |n_2| > M}} 
& 
\lesssim 
\sum_{\substack{M < |n_2| \leq N \\ |n_1| \leq N} } \frac{1}{\meanv{n^2_2-n^2_1} \meanv{n_1}^{4s - 2}  }
\sum_{ |n_3| \leq N, n_3 \neq n_1} \frac{1}{\meanv{n^2_3-n^2_1} }
\\ \nonumber
&
\leq 
\sum_{\substack{M < |n_2| \leq N \\ |n_1| \leq N} } \frac{C_{\varepsilon}}{\meanv{n^2_2-n^2_1} \meanv{n_1}^{4s - 1 - \varepsilon } }
\leq 
\sum_{M < |n_2| \leq N } \frac{C_{\varepsilon}}{\meanv{n_2}^2}  \lesssim \frac{C_{\varepsilon}}{M} \, ,
\end{align}
where we used \eqref{addProof?Correct} in the second inequality, taking $\varepsilon > 0$ sufficiently small, and  \eqref{LemmaConv2} in the third inequality.
It remains to consider the case when the sum \eqref{GeneralD} is taken over $n_1 = -n_2$ or $n_1 = -n_3$. 
The contribution of the terms with $n_1 = -n_2 \neq -n_3$ is again handled exploiting again the map $n_3 \leftrightarrow -n_3$, so that we have, 
for all $\varepsilon \in (0,1)$
  \begin{align}\nonumber
  \eqref{GeneralD} \Big|_{n_1 = -n_2 \neq -n_3}  
  & = 
  \frac12 \Big| \sum_{\substack{|n_1|,|n_3|\leq N, \\ |n_1| \neq |n_3| \\ |n_1| > M}} \frac{1}{n_1 \meanv{n_1}^{4s}} 
  \Big( \frac{1}{(n_3 - n_1)} - \frac{1}{(n_3 + n_1)}  \Big) \Big| 
  \\ \nonumber
  & = 
  \frac12 \Big| \sum_{\substack{|n_1|,|n_3|\leq N, \\ |n_1| \neq |n_3| \\ |n_1| > M}} \frac{1}{\meanv{n_1}^{4s}} 
   \frac{1}{(n_3^2 - n_1^2)} \Big| 
   \lesssim 
 \sum_{\substack{M < |n_1| \leq N \\ |n_3|\leq N, n_3 \neq n_1}} \frac{1}{\meanv{n_1}^{4s}} 
   \frac{1}{\meanv{n_3^2 - n_1^2}} 
  \\ \nonumber
  &
  \leq
   \sum_{M < |n_1| \leq N  } \frac{C_{\varepsilon}}{\meanv{n_1}^{4s+1 - \varepsilon}} \leq \frac{C_{\varepsilon}}{M^{4s - \varepsilon}}  \, ,
\end{align}
where we used \eqref{addProof?Correct} in the last inequality.

By symmetry w.r.t. $n_2 \leftrightarrow n_3$, using the 
triangle inequality, we can reduce to consider $n_1 = -n_2$. We have  
\begin{align}
\eqref{GeneralD} \Big|_{n_1 = n_2} 
& 
\lesssim
  \sum_{  \substack{ M < |n_1| \leq N \\  |n_3| \leq N } }\frac{1}{\meanv{n_3 - n_1} \meanv{n_1}^{4s+1} }  
  \\ \nonumber
 &
\lesssim
\sum_{|n_3| \leq N} \Big( \sum_{  \substack{ M < |n_1| \leq N  } } \frac{1}{\meanv{n_1}^{\frac{3}{2}\left(4s+1\right) } } \Big)^{2/3}
\Big( \sum_{  \substack{ M < |n_1| \leq N  } } \frac{1}{\meanv{n_3 - n_1}^3  }     \Big)^{1/3} 
\end{align}
For the contribution of the terms with $n_1 = -n_2 = -n_3$, we have  
 \begin{equation}\nonumber
  \eqref{GeneralD} \Big|_{n_1 = -n_2 = -n_3}  
  \lesssim 
   \sum_{  M < |n_1| \leq N } \frac{1}{\meanv{n_1}^{4s+2}} \lesssim \frac{1}{M^{4s + 1}}  
\end{equation}

$\bullet\,$Case (e), $\s = (4,3,2,1)$.
Note that $n_1 + n_2 = n_{\s(1)} + n_{\s(2)}$ and $n_3 + n_4 = n_{\s(3)} + n_{\s(4)}$ reduce to $n_1 + n_2 = n_3 + n_4$, so we need to evaluate
\begin{align}\nonumber 
&\Big| \sum_{ \substack{ |n_i| \leq N, i = 1, \ldots, 4  \\  n_1 \neq n_4, n_2 \neq n_3  \\ 
 n_1 + n_2 = n_3 + n_4, \ \max_i(|n_i|) > M } }
\frac{1}{(n_4-n_1)(n_3 - n_2)  \meanv{n_1}^{2s} \meanv{n_2}^{2s} } \Big|
\\ 
& \leq \label{GeneralE} 
 \sum_{ \substack{ |n_i| \leq N, i = 1, 2, 3   \\  
 \max(|n_1|,|n_2|,|n_3|) > M/3 } }
\frac{1}{\meanv{n_2-n_3}^2  \meanv{n_1}^{2s} \meanv{n_2}^{2s} } \, ,
  \end{align}
 where we used that the sum was restricted over $n_4-n_1 = n_2 - n_3$.
Since 
$$
\sum_{|n_3| \leq N}\frac{1}{\meanv{n_2-n_3}^2}  \leq \sum_{|n_3|}\frac{1}{\meanv{n_3}^2}  \leq C \, ,
$$
when $|n_1| > M/3$ we can bound
\begin{equation}
\eqref{GeneralE} \Big|_{|n_1| > M/3} \leq
\sum_{ \substack{ M/3 < |n_1| \leq N   \\  
 |n_2| \leq N } }
 \frac{1}{\meanv{n_1}^{2s} \meanv{n_2}^{2s} }
 \lesssim \sum_{ M/3 < |n_1| \leq N  }
\frac{1}{ \meanv{n_1}^{2s} } \lesssim \frac{1}{M^{2s - 1}}
\end{equation}
and when $|n_2| > M/3$ we can bound
\begin{equation}
\eqref{GeneralE} \Big|_{|n_2| > M/3} \leq
\sum_{ \substack{ M/3 < |n_2| \leq N   \\  
 |n_1| \leq N } }
 \frac{1}{\meanv{n_1}^{2s} \meanv{n_2}^{2s} }
 \lesssim \sum_{ M/3 < |n_2| \leq N  }
\frac{1}{  \meanv{n_2}^{2s} } \lesssim \frac{1}{M^{2s - 1}} \, .
\end{equation} 
If $|n_3| > M/3$ we use \eqref{LemmaConv1} to bound
\begin{align}\nonumber
\eqref{GeneralE} \Big|_{|n_3| > M/3} 
& 
\leq
\sum_{ \substack{ M/3 < |n_3| \leq N   \\  
 |n_1| \leq N } } \frac{1}{ \meanv{n_1}^{2s}}
 \sum_{|n_2| \leq N}
\frac{1}{\meanv{n_2-n_3}^2  \meanv{n_2}^{2s} } 
\\ \nonumber
&
\lesssim
\sum_{ \substack{ M/3 < |n_3| \leq N   \\  
 |n_1| \leq N } } \frac{1}{ \meanv{n_1}^{2s} \meanv{n_3}^{2s}}
\lesssim 
\sum_{ M/3 < |n_3| \leq N   } 
 \frac{1}{ \meanv{n_3}^{2s}}
  \lesssim \frac{1}{M^{2s - 1}} \, .
\end{align}

To deal with the cases $(f), (g), (h)$ we will use the decomposition from Lemma \ref{Lemma:decomposizione}
and the following elementary fact 
\begin{equation}\label{eq:important-last-L21}
|n_a-n_b|\geq \min(|n_a|,|n_b|) \Rightarrow
|n_a-n_b| \geq \frac12 \max \left( |n_a|, |n_b| \right) \,. 
\end{equation}
By symmetry w.r.t. $a \leftrightarrow b$, this is a consequence of
$$
|n_a-n_b|\geq \min(|n_a|,|n_b|) \Rightarrow
|n_a-n_b| \geq \frac{|n_a|}{2} \, .
$$
This is immediate if $|n_b| \geq \frac{|n_a|}{2}$, since then $\min(|n_a|,|n_b|) \geq \frac{|n_a|}{2}$. 
Otherwise one has $|n_b| < \frac{|n_a|}{2}$ and  by triangle inequality  
$$
|n_a - n_b| \geq |n_a| - |n_b| > |n_a| - \frac{|n_a|}{2} = \frac{|n_a|}{2} \, .
$$
$\bullet\,$Case (f), $\s = (3,4,2,1)$.
We decompose
\begin{equation}\label{GeneralF}
|F_N-F_M|^2\Big|_{\sigma = (3,4,2,1)} \leq 2 |F^{<}_N-F^{<}_M|^2\Big|_{\sigma = (3,4,2,1)}
+ 
2|F^{\geq}_N-F^{\geq}_M|^2\Big|_{\sigma = (3,4,2,1)}
\end{equation}
and we will bound these term separately. We will write $\eqref{GeneralF}^{<}$ and $\eqref{GeneralF}^{\geq}$
to denote the first and second term of the sum.  
Noting that $n_1 + n_2 = n_{\s(1)} + n_{\s(2)}$ and $n_3 + n_4 = n_{\s(3)} + n_{\s(4)}$ reduce to $n_1 + n_2 = n_3 + n_4$, we get
\begin{equation}\nonumber
\eqref{GeneralF}^{\geq} = 2 \Big| \sum_{ \substack{ |n_i| \leq N, i = 1, \ldots, 4   \\ 0 \neq |n_1-n_3|\geq \min(|n_1|,|n_3|) \\ 0 \neq |n_2-n_3|\geq \min(|n_2|,|n_3|)  \\ 
n_1 + n_2 = n_3 + n_4, \ \max_i(|n_i|) > M } }\frac{|n_3|^{2s}}{(n_3-n_1) (n_3 - n_2) \meanv{n_1}^{2s} \meanv{n_2}^{2s} \meanv{n_4}^{2s}} \Big|
\end{equation}
Since we are summing over
$$
|n_1-n_3|\geq \min(|n_1|,|n_3|) , \quad    |n_2-n_3|\geq \min(|n_2|,|n_3|) \, ,
$$
by \eqref{eq:important-last-L21} we can use 
$$
\frac{1}{|n_3-n_1|}, \frac{1}{|n_3-n_2|}  \lesssim \frac{1}{\meanv{n_3}} \, .
$$
Thus
$$
\eqref{GeneralF}^{\geq} \lesssim \sum_{\substack{|n_i|\leq N,\,i=1, 2, 4\\\max(|n_1|,|n_2|,|n_4| ) > M/3}}\frac{1}{\meanv{n_1}^{2s}\meanv{n_2}^{2s}\meanv{n_4}^{2s}}\lesssim \frac{1}{M^{2s-1}} \, .
$$
To handle $\eqref{GeneralF}^{<}$ we introduce 
  \begin{align*}
A^{a,b}_{N,M}
:=\{ & |n_{i}|,|m_{i}|\leq N,\, 0<|n_a-m_a|  < \min(|n_a|,|m_a|),
\\ \nonumber
& n_a+n_b=m_a+m_b, \, \max(|m_a|,|m_b|,|n_a|,|n_b|)>M\}\,,
\end{align*}
so that Lemma~\ref{Lemma:decomposizione} gives
\begin{align}\nonumber
 |F^{<}_N-F^{<}_M| =
2\Big|\Re\Big(\sum_{k\geq1}\frac{(s)_k}{k!}\sum_{A_{N,M}^{1,2}} \frac{(m_1-n_1)^{k-1}}{|n_1|^k} |m_1|^s |n_1|^s u(n_1)u(n_2)\bar u(m_1) \bar u(m_2) \Big)\Big|\,.
\end{align}
and squaring
\begin{align*}
& |F^{<}_N(u)-F^{<}_M(u)|^2
\\
&
=4 \sum_{A^{1,2}_{N,M}\times A^{3,4}_{N,M}}
\Big[\sum_{k,h\geq1}\frac{(s)_k(s)_h}{k!h!} \frac{(m_1-n_1)^{k-1}(n_3-m_3)^{h-1}}{|n_1|^k|m_3|^h} |m_1|^s |n_1|^s |n_3|^s |m_3|^s 
\Big]
\prod_{j=1}^4\bar u({m_j})u({n_j}) \, .
\end{align*}
We now taking the expected value w.r.t. $\g_s$, using again formula \eqref{eq:Wick} with $\ell=4$, and we restrict the sum to the contribution of 
a single permuation $\sigma$, so that we get 
\begin{align}\nonumber
&\|F^{<}_N-F^{<}_M\|_{L^2(\gamma_s)}^2\Big|_{\sigma} 
 \\ 
&
= 4  \sum_{A^{1,2}_{N,M}\times A^{3,4}_{N,M}}
\Big[\sum_{k,h\geq1}\frac{(s)_k(s)_h}{k!h!} 
\frac{(m_1-n_1)^{k-1}(n_3-m_3)^{h-1}}{|n_1|^k|m_3|^h} |m_1|^s |n_1|^s |n_3|^s |m_3|^s \Big] \prod_{j=1}^{4} \frac{\d_{m_j,n_{\sigma(j)}}}{\meanv{n_j}^{2s}} \nn \\
&= 4\sum_{\s(A^{1,2}_{N,M}\times A^{3,4}_{N,M})} \Big[\sum_{k,h\geq1}\frac{(s)_k(s)_h}{k!h!}
\frac{(n_{\s(1)}-n_1)^{k-1}(n_3-n_{\s(3)})^{h-1}}{|n_1|^k|n_{\s(3)}|^h} |n_{\s(1)} |^s |n_1|^s  |n_3|^s |n_{\s(3)}|^s
\Big]\prod_{j=1}^{4} \frac{1}{\meanv{n_j}^{2s}}
\nn \\
&\lesssim \sum_{\s(A^{1,2}_{N,M}\times A^{3,4}_{N,M})} 
\frac{|n_1|^{s-1} |n_{\s(3)}|^{s-1} |n_{\s(1)}|^s |n_3|^s}{ \prod_{j=1}^{4} \meanv{n_j}^{2s} } \, ,
\label{eq:Wick4-2New}
\end{align}
So that 
$$
\eqref{GeneralF}^{<} \lesssim \sum_{\s(A^{1,2}_{N,M}\times A^{3,4}_{N,M})} 
\frac{|n_1|^{s-1} |n_{\s(3)}|^{s-1} |n_{\s(1)}|^s |n_3|^s}{ \prod_{j=1}^{4} \meanv{n_j}^{2s} } 
 \Big|_{\sigma = (3,4,2,1)}
$$
which implies
\begin{align*}
& \eqref{GeneralF}^{<}  \lesssim
\sum_{\substack{|n_i|\leq N, i=1,\ldots 4\\n_1+n_2=n_3+n_4\\ 0<|n_2- n_{3}|  < \min(|n_2|,|n_3|) \\ 0<|n_1- n_{3}|  < \min(|n_1|,|n_3|) \\ \max_i(|n_i|)> M}}
\frac{1}{\meanv{n_1}^{s+1} \meanv{n_2}^{s+1} \meanv{n_4}^{2s}}
\\
&
\lesssim
\sum_{\substack{|n_1|,|n_2|,|n_4| \leq N \\ \max(|n_1|,|n_2|,|n_4|)\geq M/3}}
\frac{1}{\meanv{n_1}^{s+1} \meanv{n_2}^{s+1} \meanv{n_4}^{2s}} \lesssim \frac{1}{M^{2s -1}} \, .
\end{align*}

$\bullet\,$Case (g), $\s = (3,4,1,2)$.
Proceeding as in the case (f), we
decompose
\begin{equation}\label{GeneralG}
|F_N-F_M|^2\Big|_{\sigma = (3,4,2,1)} \leq 2 |F^{<}_N-F^{<}_M|^2\Big|_{\sigma = (3,4,2,1)}
+ 
2|F^{\geq}_N-F^{\geq}_M|^2\Big|_{\sigma = (3,4,2,1)}
\end{equation}
and we will bound these term separately. We will write $\eqref{GeneralG}^{<}$ and $\eqref{GeneralG}^{\geq}$
to denote the first and second term of the sum.  
Note that $n_1 + n_2 = n_{\s(1)} + n_{\s(2)}$ and $n_3 + n_4 = n_{\s(3)} + n_{\s(4)}$ reduce to $n_1 + n_2 = n_3 + n_4$, so 
\begin{equation}
\eqref{GeneralG}^{\geq} = 2 \Big| \sum_{ \substack{ |n_i| \leq N, i = 1, \ldots, 4   \\  0 \neq |n_1-n_3| \geq \min(|n_1|,|n_3|) \\  n_1 + n_2 = n_3 + n_4, \ \max_i(|n_i|) > M } }
\frac{|n_3|^{2s}}{(n_3-n_1)^2  \meanv{n_1}^{2s} \meanv{n_2}^{2s} \meanv{n_4}^{2s}} \Big|
\end{equation}
by \eqref{eq:important-last-L21} we can use 
$$
\frac{1}{|n_3-n_1|} \lesssim \frac{1}{\meanv{n_3}} \, ,
$$
so that
$$
\eqref{GeneralG}^{\geq} \lesssim \sum_{\substack{|n_i|\leq N,\,i=1, 2, 4\\\max(|n_1|,|n_2|,|n_4|) > M/3}}\frac{1}{\meanv{n_1}^{2s}\meanv{n_2}^{2s}\meanv{n_4}^{2s}}\lesssim \frac{1}{M^{2s-1}} \, .
$$ 
To handle $\eqref{GeneralG}^{<}$ we use again (see case (f))
\begin{equation}
\eqref{GeneralG}^{<} 
\lesssim \sum_{\s(A^{1,2}_{N,M}\times A^{3,4}_{N,M})} 
\frac{|n_1|^{s-1} |n_{\s(3)}|^{s-1} |n_{\s(1)}|^s |n_3|^s}{ \prod_{j=1}^{4} \meanv{n_j}^{2s} } 
 \Big|_{\sigma = (3,4,1,2)} \, ,
 \end{equation}
which implies
\begin{align*}
& \eqref{GeneralF}^{<}  \lesssim 
\sum_{\substack{|n_i|\leq N, i=1,\ldots 4\\n_1+n_2=n_3+n_4\\  0< |n_1- n_{3}|  < \min(|n_1|,|n_3|) \\ \max_i(|n_i|)> M}}
\frac{1}{\meanv{n_1}^{2} \meanv{n_2}^{2s} \meanv{n_4}^{2s}}
\\
&
\lesssim
\sum_{\substack{|n_1|,|n_2|,|n_4| \leq N \\ \max(|n_1|,|n_2|,|n_4|)\geq M/3}}
\frac{1}{\meanv{n_1}^{2} \meanv{n_2}^{2s} \meanv{n_4}^{2s}} \lesssim \frac{1}{M^{2s -1}} \, .
\end{align*}

$\bullet\,$Case (h), $\s = (4,3,1,2)$.
Proceeding as in the case (f), we
decompose
\begin{equation}\label{GeneralH}
|F_N-F_M|^2\Big|_{\sigma = (4,3,1,2)} \leq 2 |F^{<}_N-F^{<}_M|^2\Big|_{\sigma = (4,3,1,2)}
+ 
2|F^{\geq}_N-F^{\geq}_M|^2\Big|_{\sigma = (4,3,1,2)}
\end{equation}
and we will bound these term separately. We will write $\eqref{GeneralH}^{<}$ and $\eqref{GeneralH}^{\geq}$
to denote the first and second term of the sum.  

Note that $n_1 + n_2 = n_{\s(1)} + n_{\s(2)}$ and $n_3 + n_4 = n_{\s(3)} + n_{\s(4)}$ reduce to $n_1 + n_2 = n_3 + n_4$, so we need to evaluate
\begin{align}
\eqref{GeneralH}^{\geq} \lesssim\,\,\,\,
& \Big| \sum_{ \substack{ |n_i| \leq N, i = 1, \ldots, 4   \\  0 \neq |n_1-n_4|\geq \min(|n_1|,|n_4|) \\  0 \neq |n_1-n_3|\geq \min(|n_1|,|n_3|) \\ 
 n_1 + n_2 = n_3 + n_4, \ \max_i(|n_i|) > M } }
\frac{1}{(n_4-n_1)(n_3 - n_1)  \meanv{n_1}^{2s} \meanv{n_2}^{2s} } \Big|
\\ \nonumber
&
\lesssim
 \sum_{ \substack{ |n_i| \leq N, i = 1, \ldots, 4  \\ 
\max(|n_1|,|n_2|,|n_4|) > M/3 } }
\frac{1}{\meanv{n_4} \meanv{n_2 - n_4}  \meanv{n_1}^{2s} \meanv{n_2}^{2s} } 
\end{align}
where we used \eqref{eq:important-last-L21}, so that
 $$
 \frac{1}{|n_4 - n_1|} \lesssim \frac{1}{\meanv{n_4}} 
 $$
 and the fact that the sum were restricted to $n_3 - n_1 = n_2 - n_4$
  If $|n_1| > M/3$  
  we can estimate
  \begin{align}\nonumber
\eqref{GeneralH}^{\geq}  \Big|_{|n_1| >M/3 }  
  & \lesssim
 \sum_{ \substack{ M/3 < |n_1| \leq N, \\ |n_4| \leq N}   } \frac{1}{\meanv{n_1}^{2s} \meanv{n_4} }  
 \sum_{ |n_2| \leq N} \frac{1}{\meanv{n_2}^{2s} \meanv{n_2 - n_4}  }
 \\ \nonumber
 &
 \lesssim
 \sum_{ \substack{ M/3 < |n_1| \leq N, \\ |n_4| \leq N }   } \frac{1}{\meanv{n_1}^{2s} \meanv{n_4}^{2} }  
 \lesssim \sum_{  M/3 < |n_1| \leq N    } \frac{1}{\meanv{n_1}^{2s}  }  \lesssim \frac{1}{M^{2s - 1}} \, ,
 \end{align}
 where in the second inequality we used \eqref{LemmaConv1}. If $|n_4| > M/3$ we estimate similarly
  \begin{align}\nonumber
  \eqref{GeneralH}^{\geq} \Big|_{|n_4| >M/3 }  
 & \lesssim
 \sum_{ \substack{ M/3 < |n_4| \leq N, \\ |n_1| \leq N}   } \frac{1}{\meanv{n_1}^{2s} \meanv{n_4} }  
 \sum_{ |n_2| \leq N} \frac{1}{\meanv{n_2}^{2s} \meanv{n_2 - n_4}  }
 \\ \nonumber
 &
 \lesssim
 \sum_{ \substack{ M/3 < |n_4| \leq N, \\ |n_1| \leq N }   } \frac{1}{\meanv{n_1}^{2s} \meanv{n_4}^{2} }  
 \lesssim \sum_{  M/3 < |n_4| \leq N    } \frac{1}{\meanv{n_4}^{2}  }  \lesssim \frac{1}{M} \, .
 \end{align}
 If $|n_2| > M/3$ we use that by Cauchy--Schwartz
 \begin{align}\nonumber
 \sum_{ |n_4| \leq N} \frac{1}{\meanv{n_4} \meanv{n_2 - n_4}  } 
 & \leq 
 \left( \sum_{ |n_4| \leq N} \frac{1}{\meanv{n_4}^2  }  \right)^{1/2} 
 \left( \sum_{ |n_4| \leq N} \frac{1}{ \meanv{n_2 - n_4}^2  } \right)^{1/2}
 \\ \nonumber
 &
 \leq \left(  \sum_{ |n_4|} \frac{1}{\meanv{n_4}^2} \right)^2 \leq C \, ,
 \end{align}
 so that
  \begin{align}
  \eqref{GeneralH}^{\geq} \Big|_{|n_2| >M/3 }  
 & \lesssim
 \sum_{ \substack{ M/3 < |n_2| \leq N, \\ |n_1| \leq N}   } \frac{1}{\meanv{n_1}^{2s} \meanv{n_2}^{2s}   }  
 \sum_{ |n_4| \leq N} \frac{1}{\meanv{n_4} \meanv{n_2 - n_4}  }
 \\ \nonumber
 &
\lesssim
 \sum_{ \substack{ M/3 < |n_2| \leq N, \\ |n_1| \leq N}   } \frac{1}{\meanv{n_1}^{2s} \meanv{n_2}^{2s}  }  
\lesssim  \sum_{ \substack{ M/3 < |n_2| \leq N, \\ |n_1| \leq N}   } \frac{1}{ \meanv{n_2}^{2s}  }  
\lesssim \frac{1}{M^{2s-1}}
 \, ,
 \end{align}
To handle $\eqref{GeneralG}^{<}$ we use again (see case (f))
\begin{equation}
\eqref{GeneralH}^{<} 
\lesssim \sum_{\s(A^{1,2}_{N,M}\times A^{3,4}_{N,M})} 
\frac{|n_1|^{s-1} |n_{\s(3)}|^{s-1} |n_{\s(1)}|^s |n_3|^s}{ \prod_{j=1}^{4} \meanv{n_j}^{2s} } 
 \Big|_{\sigma = (4,3,1,2)} \, ,
 \end{equation}
which implies
\begin{equation*}
 \eqref{GeneralH}^{<}  \lesssim 
\sum_{\substack{|n_i|\leq N, i=1,\ldots 4\\n_1+n_2=n_3+n_4\\  0< |n_1- n_{3}|  < \min(|n_1|,|n_3|) \\ 0< |n_1 - n_{4}|  < \min(|n_1|,|n_4|) \\ \max_i(|n_i|)> M}}
\frac{1}{\meanv{n_1}^{2} \meanv{n_2}^{2s} \meanv{n_3}^{s} \meanv{n_4}^{s}}
\end{equation*}
To handle this we use the following elementary fact  
\begin{equation}\label{Obv2}
|n_a - n_{b}|  < \min(|n_a|,|n_b|) \Rightarrow |n_a| \simeq |n_b|, \quad
\end{equation}
By symmetry w.r.t. $n_a \leftrightarrow n_b$ it suffices to 
show 
$$
|n_a| \leq 2 |n_b| \, ,
$$
which follows by triangle inequality
$$
|n_a| \leq |n_b| + |n_a - n_b| \leq 2 |n_b| \, ,
$$
where we used the assumption in \eqref{Obv2} in the second inequality.
Using \eqref{Obv2} we see that the sum 
above is in fact restricted to 
$$
|n_1| \simeq |n_3| \simeq |n_4| \, ,
$$
which with the restrictions
$$
n_1 + n_2 = n_3 + n_4, \quad 
\max_i(|n_i|) > M \, ,
$$
also forces
$$
|n_1| \simeq |n_3| \simeq |n_4| \gtrsim M \, .
$$
Thus we can estimate
\begin{align*}
& \eqref{GeneralH}^{<}  \lesssim 
\frac{1}{M^s}\sum_{\substack{|n_1|,|n_2|,|n_3| \leq N,  \\ |n_1|,|n_3| > M}}
\frac{1}{\meanv{n_1}^{1 + \frac{s}{2}} \meanv{n_2}^{2s} \meanv{n_3}^{1+ \frac{s}{2}} }
\lesssim \frac{1}{M^{\frac{3}{2} s}} \, ,
\end{align*}
that concludes the proof.
\end{proof}


\section{Proof of Proposition \ref{prop:mom-sub-exp}}\label{sect:prop}

Let us recall once more
$$
F_N:=\frac{d}{d\a}\|\Ga_\a P_N u\|_{\dot H^s}\Big|_{\a=0}\,. 
$$
By Lemma \ref{lemma:L2-F1} there is $C>0$ such that for any $M \geq N \in\N$
\begin{equation}\label{Hyp}
\left\|F_N-F_M\right\|_{L^2(\g_s)}\leq \frac{C}{N^{s-\frac12}}\,.
\end{equation}
Recalling that $F_N$ can be written as in \eqref{eq:F-2}, we immediately see that \eqref{Hyp} implies by hypercontractivity that for 
all $p>2$ there is $C>0$ (possibly different from above) for which
\be\label{eq:hyper}
\left\|F_N-F_M\right\|_{L^p(\g_s)}\leq \frac{Cp^2}{N^{s-\frac12}}\,.
\ee

\ni Then we can immediately establish the following concentration inequality for $F_N$ around its limit.

\begin{proposition}\label{prop:exp0}
Let $s>1/2$ and $N\in\N$. There are $C,c>0$ such that
\be\label{eq:conc1}
\g_s\left(|F_N-F|\geq t\right)\leq Ce^{-c\sqrt t N^{\frac{2s-1}{4}}}\,. 
\ee
\end{proposition}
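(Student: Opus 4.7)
The strategy is to upgrade the $L^p$ estimate \eqref{eq:hyper} into the tail bound \eqref{eq:conc1} via Markov's inequality and optimisation in $p$, exploiting the fact that $F_N - F_M$ is a polynomial of degree four in Gaussian variables (so hypercontractivity applied to \eqref{Hyp} controls arbitrarily high moments).

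First I would pass to the limit $M\to\infty$ in \eqref{eq:hyper} to obtain
\begin{equation}\label{eq:plan1}
\|F_N - F\|_{L^p(\g_s)} \leq \frac{Cp^{2}}{N^{s-\frac12}}\,,\qquad p \geq 2\,.
\end{equation}
This is legitimate because Lemma~\ref{lemma:L2-F1} gives $F_M\to F$ in $L^2(\g_s)$, hence along a subsequence $\g_s$-a.s., and Fatou's lemma passes the bound $\|F_N-F_{M_k}\|_{L^p} \leq Cp^2/N^{s-1/2}$ (which itself follows from \eqref{eq:hyper} with $M=M_k$) through the limit.

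Next, for any $p\geq 2$ Markov's inequality applied to $|F_N-F|^p$ gives
\begin{equation}\label{eq:plan2}
\g_s\bigl(|F_N - F|\geq t\bigr) \leq \frac{\|F_N-F\|_{L^p(\g_s)}^{p}}{t^{p}} \leq \left(\frac{C p^{2}}{t\, N^{s-\frac12}}\right)^{p}\,.
\end{equation}
I then optimise over $p$: choosing $p \simeq c_0\sqrt{t}\, N^{(2s-1)/4}$ with $c_0>0$ sufficiently small makes the base of the exponential in \eqref{eq:plan2} at most $e^{-2}$ (say), so that the right-hand side is bounded by $\exp(-c\sqrt{t}\, N^{(2s-1)/4})$ for some absolute $c>0$. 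For small values of $t$ such that the optimal $p$ would fall below $2$, one simply bounds the probability by $1$, which is harmless after adjusting the multiplicative constant $C$ in \eqref{eq:conc1}.

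There is no real obstacle here: the hypercontractive $L^p$ estimate \eqref{eq:hyper} already encodes the degree-$4$ chaos structure of $F_N$, and the passage to a tail bound via Markov's inequality and $p$-optimisation is entirely routine. The only point requiring minor care is the limit $M\to\infty$, which rests on the a.s.~(subsequential) convergence provided by Lemma~\ref{lemma:L2-F1}.
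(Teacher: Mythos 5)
The proposal is correct and takes essentially the same route as the paper: both rest on the hypercontractive moment bound \eqref{eq:hyper} and Markov's inequality. The paper delegates the passage from moment growth $\|F_N-F_M\|_{L^p}\lesssim p^2 N^{-(s-1/2)}$ to a fractional exponential moment (citing \cite[Proposition~4.5]{Tz10}), whereas you carry out the equivalent $p$-optimisation explicitly; your additional Fatou step to replace $F_M$ by $F$ is a point the paper leaves implicit.
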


\begin{proof}
Having bounded all the moments as in (\ref{eq:hyper}) we can bound also the fractional exponential moment
\be\label{eq:exp.fract}
E_s\left[\exp\left(c\sqrt{|F_N-F_M|}N^{\frac {2s-1}{4}}\right)\right]<\infty\,,
\ee
for a suitable constant $c>0$ (see e.g. \cite[Proposition 4.5]{Tz10}). From (\ref{eq:exp.fract}) we obtain (\ref{eq:conc1}) in the standard way using Markov inequality. 
\end{proof}

These bounds (notably independent on $R$) are however not optimal and we need to improve on them. We shall show that $\{F_N\}_{N\in\N}$ are in fact sub-exponential random variables uniformly in $N$, whereby Proposition \ref{prop:mom-sub-exp} will follow as a simple corollary. We split the bulk and the tail of the distribution of $F$ as follows:

\begin{proposition}\label{prop:sub-exp2}
Let $s > 1/2 $ and $R^* := \max\left( R^{\frac{2}{2s-1}}, R^{2(2s-1)}\right)$. Then 
There exist $c,C>0$ independent on $N$ for which
\be\label{eq:sub-exp2}
\tilde\g_{s,N}(|F_N|\geq t)<C\exp\left(-\frac{ct}{R^* }\right)\,,\quad t\leq \sqrt{N^{2s-1}} \,.
\ee
\end{proposition}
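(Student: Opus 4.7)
The target is to upgrade the hypercontractive tail of Proposition~\ref{prop:exp0} (which is only sub-$\sqrt{t}$ under $\gamma_s$) to a genuine sub-exponential tail of scale $R^*$, by exploiting the $L^2$-restriction built into $\tilde\gamma_{s,N}$. The strategy is the one introduced in \cite{B94}: split $F_N$ into a ``low-frequency'' piece that is controlled \emph{deterministically} using the mass restriction, and a ``high-frequency'' piece that is controlled \emph{probabilistically} by hypercontractivity.

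First, I would use the decomposition $F_N = F_N^{\geq} + F_N^{<}$ from Lemma~\ref{Lemma:decomposizione} and treat the two terms separately. For $F_N^{\geq}$, I would introduce a cut-off $K = K(t,R)$ and split according to whether the pivot pair $(m_1, n_1)$ satisfies $\max(|m_1|,|n_1|) \leq K$ (low part) or $\max(|m_1|,|n_1|) > K$ (high part). For the low part, I would use the constraint $n_1 + n_2 = m_1 + m_2$ to recognize the inner sum over $(n_2, m_2)$ as the convolution $T_{n_1-m_1} := \sum_{n_2} u(n_2)\bar u(n_2 -(n_1-m_1))$, which is bounded by $\|P_N u\|_{L^2}^2 \leq R^2$ on the support of $\tilde\gamma_{s,N}$. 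Combined with the weight estimate $|m_1|^{2s}/|m_1-n_1|\lesssim \max(|m_1|,|n_1|)^{2s-1}$ coming from \eqref{eq:important-last-L21}, this produces a deterministic bound of the form $|F_N^{\geq,\,\mathrm{low}}(K)| \lesssim R^{a} K^{b}$ for suitable exponents, after applying Cauchy--Schwarz to the residual sum in $(m_1,n_1)$ via the $L^2$ mass.

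For the high part I would reuse the Wick-calculus estimates developed in the proof of Lemma~\ref{lemma:L2-F1} (restricted to the high-frequency sector) to obtain $\|F_N^{\geq,\,\mathrm{high}}(K)\|_{L^2(\gamma_s)} \lesssim K^{-(s-1/2)}$, and then invoke hypercontractivity \eqref{eq:hyper} and a Markov--Chebyshev optimization in $p$ exactly as in Proposition~\ref{prop:exp0}; this yields
\[
\tilde\gamma_{s,N}\bigl(|F_N^{\geq,\,\mathrm{high}}(K)|>t/2\bigr)\lesssim \exp\bigl(-c\sqrt{t}\,K^{(2s-1)/4}\bigr).
\]
I would then choose $K = K(t,R)$ so that the deterministic low-frequency bound is $\leq t/2$ and the stochastic high-frequency tail is $\leq \tfrac12 \exp(-ct/R^*)$. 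The range $t \leq \sqrt{N^{2s-1}}$ in the statement is exactly what ensures that the optimal $K$ stays $\leq N$, so that the dyadic truncation is compatible with the support of $P_N u$. Finally, I would run the same argument for $F_N^{<}$: here one has the further restriction $|n_1| \sim |m_1|$ (cf.~\eqref{Obv2}), so that the Taylor series in $k$ in \eqref{eq:F<} converges geometrically and each term can be treated in the same spirit, with the $k=1$ term dominating.

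\textbf{Main obstacle.} The delicate point is extracting the \emph{sharp} dependence on $R$ in the deterministic bound, i.e.\ obtaining a low-frequency estimate proportional to $R^*$ rather than a larger power of $R$. The naive application of Cauchy--Schwarz (bounding $\sum_{|n|\leq K}|u(n)| \leq K^{1/2} R$) loses a factor of $K^{1/2}$ and does not give the sub-exponential scale. The correct bound is obtained by balancing two different uses of the $L^2$-mass on the surviving bilinear form in $(m_1,n_1)$: one exploits a sup-norm type estimate (which produces the factor $R^{2/(2s-1)}$), the other an $\ell^2$-type estimate (which produces $R^{2(2s-1)}$), and the worst of the two is precisely the $R^*$ of the statement, with the transition happening at the integer value $s=1$ treated in \cite{NR-BSS11,GLV2}. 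Carrying out this trade-off cleanly, uniformly in $s>1/2$, and matching it with the optimization of $K$, is the technical crux of the proof.
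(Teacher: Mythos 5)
Your plan is not the route the paper takes, and it overlooks a much shorter argument that the paper's architecture makes available. The paper proves Proposition~\ref{prop:sub-exp2} by a short \emph{bootstrap} from the two results that surround it: set $T:=\lfloor t^{2/(2s-1)}\rfloor$, note that $t\leq\sqrt{N^{2s-1}}$ guarantees $T\leq N$, and write the union bound
\begin{equation*}
\tilde\g_{s,N}(|F_N|\geq t)\leq \g_{s}(|F_N-F_T|\geq t/2)+\tilde\g_{s,N}(|F_T|\geq t/2)\,.
\end{equation*}
For the first term, Proposition~\ref{prop:exp0} gives $\g_s(|F_N-F_T|\geq t)\lesssim \exp(-c\sqrt t\, T^{(2s-1)/4})$, and the choice of $T$ makes $\sqrt t\,T^{(2s-1)/4}\simeq t$, so this piece is $\lesssim e^{-ct}$ (which is even better than $e^{-ct/R^*}$, with no $R$-dependence). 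For the second term, the choice of $T$ puts $t$ \emph{into the tail regime} for $F_T$, i.e.\ $t\geq\sqrt{T^{2s-1}}$, so Proposition~\ref{prop:sub-exp1} applies directly and supplies exactly the $\exp(-ct/R^*)$ bound. No new splitting of $F$ and no new optimization is needed in this proof.

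What you propose — decomposing $F_N=F_N^{\geq}+F_N^{<}$, introducing a cutoff $K(t,R)$, bounding the low-frequency part deterministically via the mass restriction and the high-frequency part by hypercontractivity — is essentially a re-derivation \emph{inline} of the content of Proposition~\ref{prop:sub-exp1} and of Lemmas~\ref{lemma:L-Pbound}, \ref{lemma:subexpX}, \ref{lemma:subexpY}. Two remarks on that. First, the paper's decomposition in the tail regime is not the one you sketch: it passes through the pointwise estimate $|F_N|\lesssim X_N^2 Y_N^2$ of Lemma~\ref{lemma:L-Pbound}, with $X_N$ and $Y_N$ aggregated Littlewood--Paley quantities, and then controls the tails of $X_N$ and $Y_N$ separately with a dyadic cutoff level $j_t$; it does not apply a Cauchy--Schwarz trade-off directly on the bilinear form in $(m_1,n_1)$. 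Second, the ``main obstacle'' you flag — extracting the sharp $R^*=\max(R^{2/(2s-1)},R^{2(2s-1)})$ rather than a lossier power of $R$ — is genuine, and your plan does not resolve it; you only describe the shape of the desired trade-off. The two exponents in $R^*$ actually arise as the respective thresholds $R^{2/(2s-1)}$ and $R^{2}$ in the $X$- and $Y$-cutoffs of Lemmas~\ref{lemma:subexpX} and~\ref{lemma:subexpY}, not from balancing a sup-norm against an $\ell^2$ bound on a single bilinear piece. So even if executed carefully, your approach would be much longer than the paper's proof and would have to reinvent the $X_N,Y_N$ machinery to recover the stated $R^*$ scale. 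The lesson here is the modular structure: once the tail regime ($t\geq\sqrt{N^{2s-1}}$) is handled with the correct $R$-dependence, the bulk regime reduces to a two-line union bound by simply \emph{lowering the truncation level}.
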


\begin{proposition}\label{prop:sub-exp1}
Let $s > 1/2 $ and $R^* := \max\left( R^{\frac{2}{2s-1}}, R^{2(2s-1)}\right)$. There exist $c,C>0$ independent on $N$ for which
\be\label{eq:sub-exp1}
\tilde\g_{s,N}(|F_N|\geq t)<C\exp\left(-\frac{ct}{R^* }\right)\,,\quad t\geq \sqrt{N^{2s-1}} \,.
\ee
\end{proposition}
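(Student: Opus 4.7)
My plan is to prove Proposition \ref{prop:sub-exp1} in parallel with Proposition \ref{prop:sub-exp2}, viewing both as two regimes of a single application of Bernstein's inequality \eqref{eq_Bernstein}. The strategy is to linearise $F_N$ on the support $\{\|P_N u\|_{L^2} \leq R\}$, writing it in the form $\sum_i a_i X_i$ plus a deterministic remainder, where the $X_i = |u(n_i)|^2 - \E|u(n_i)|^2$ are centred independent sub-exponential ($\chi^2$-type) random variables and the coefficients $a_i$ are deterministic. Bernstein then produces
\[
\tilde\gamma_{s,N}(|F_N| \geq t) \lesssim \exp\Big(-c\min\big(t/K,\; t^2/\sigma^2\big)\Big),
\]
with $K = \max_i|a_i|$ and $\sigma^2 = \sum_i|a_i|^2$. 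Proposition \ref{prop:sub-exp2} is the sub-Gaussian branch $t^2/\sigma^2$, active in the bulk $t \leq \sigma^2/K$; Proposition \ref{prop:sub-exp1} is the sub-exponential branch $t/K$, active in the tail $t \geq \sigma^2/K$. The transition threshold $\sigma^2/K$ is engineered by the computation to coincide with $\sqrt{N^{2s-1}}$, matching the split in the hypotheses.

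The linearisation itself will use the decomposition $F_N = F_N^\geq + F_N^<$ from Lemma \ref{Lemma:decomposizione}, treating the two pieces separately as is done elsewhere in the paper. For $F_N^\geq$, the smoothing gain $|m_1 - n_1|^{-1} \lesssim \meanv{m_1}^{-1}$ from \eqref{eq:important-last-L21} effectively reduces the symbol from $|m_1|^{2s}$ to $|m_1|^{2s-1}$. For $F_N^<$, the cancellation \eqref{eq:canc} (i.e., the fractional integration by parts exposing the imaginary-part gain, as stressed in the introduction) turns the series over $k \geq 1$ into a convergent one with an effective gain from the constraint $|m_1 - n_1| < \min(|m_1|,|n_1|)$, so that \eqref{Obv2} forces $|m_1| \simeq |n_1|$. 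In each case, a Wick-type pairing of the Fourier coefficients (matched with the diagonal contribution in the Gaussian expectation) reshapes the quartic form into the desired centred linear combination of $|u(n)|^2$'s plus the deterministic expectation remainder. In the present tail regime, Bernstein gives $\exp(-ct/K)$ and the remaining task is to show $K \lesssim R^*$.

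The main obstacle is precisely this sharp bound $K \lesssim R^* = \max(R^{2/(2s-1)}, R^{2(2s-1)})$. The two exponents correspond to two competing ways of exploiting the $L^2$-restriction $\|P_N u\|_{L^2} \leq R$ to control a single coefficient $a_i$: a truncation at the Bernstein scale $J \sim R^{-2/(2s-1)}$, where $\|P_J u\|_{L^\infty}$ is controlled by $R$, accounts for the $R^{2/(2s-1)}$ factor and dominates in the $F_N^<$ regime where $|m_1| \simeq |n_1|$; a direct Sobolev-type estimate of order $2s-1$ accounts for the $R^{2(2s-1)}$ factor and dominates in the $F_N^\geq$ regime where $|m_1|^{2s-1}$ is the effective weight after the smoothing gain. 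Balancing these two bounds against the $L^2$-restriction through a multi-scale splitting of frequency space (analogous to the low/high separation used for the $L^2$-convergence of Section \ref{sect:L2}) is the technically delicate step. Once $K \lesssim R^*$ is established, Proposition \ref{prop:sub-exp1} follows directly from the $t/K$ regime of Bernstein.
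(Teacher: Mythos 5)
Your central step does not go through: the quantity
$F_N$ defined in \eqref{Def:F} and given explicitly in \eqref{eq:F-2} is a
genuinely \emph{quartic} polynomial in the Gaussian Fourier coefficients, and
it cannot be rewritten pathwise as $\sum_i a_i X_i$ with deterministic
coefficients $a_i$ and centred sub-exponential $X_i = |u(n_i)|^2 -
\E{|u(n_i)|^2}$ (plus a deterministic remainder). The ``Wick-type pairing'' you
invoke produces an identity only under the expectation $E_s[\cdot]$ (as in
\eqref{eq:Wick}); it is not a pathwise algebraic manipulation of the random
variable $F_N$. The off-diagonal contributions to the quartic form (e.g.\ the
terms indexed by $(n_1,n_2,m_1,m_2)$ with $\{n_1,n_2\}\neq\{m_1,m_2\}$) are
genuinely random, not a deterministic ``expectation remainder.'' Consequently
Bernstein's inequality \eqref{eq_Bernstein}, which requires precisely the
structure of a linear form in i.i.d.\ sub-exponential variables with
deterministic weights, cannot be applied to $F_N$ itself, and the whole
two-branch scheme collapses at the first step.

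The paper avoids this obstruction in a different way. Lemma
\ref{lemma:L-Pbound} establishes the \emph{pathwise} (deterministic, pointwise
in $u$) bound $|F_N|\lesssim X_N^2 Y_N^2$ with $X_N,Y_N$ as in
\eqref{eq:defX}--\eqref{eq:defY}; this is purely harmonic-analytic and uses the
decomposition $F_N=F_N^{\geq}+F_N^{<}$ and the dyadic structure. The quartic
$F_N$ is thus controlled by a product of two \emph{positive random} quantities.
One then passes to the union bound \eqref{eq:union-bound-XY} and bounds the
tails of $X_N$ and $Y_N$ separately (Lemmas \ref{lemma:subexpX} and
\ref{lemma:subexpY}). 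Bernstein is used there, but applied to
$\|\Delta_j P_N u\|_{L^2}^2$ on individual dyadic blocks, which \emph{are}
linear forms in the independent $|u(n)|^2$; the $L^2$-ball restriction is used
to kill the low-frequency part (frequencies below $j_t$) deterministically. The
two exponents in $R^*$ arise from the two lemmas separately, not from one
balancing $K\lesssim R^*$. You would also run into a second mismatch: in the
paper, Proposition \ref{prop:sub-exp2} is \emph{not} the sub-Gaussian branch of
a single Bernstein estimate; it is derived from Proposition
\ref{prop:sub-exp1} together with Proposition \ref{prop:exp0}
(hypercontractivity applied to $F_N-F_T$) via the split
\eqref{eq:subexp2-use} with $T=\lfloor t^{2/(2s-1)}\rfloor$. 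So the bulk/tail
dichotomy you propose does not match the actual structure of the argument.
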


To bound the tail of the distribution of $F$ we need some work and therefore that will be addressed first.  
Recall that for $j\in\N$ the Littlewood-Paley projector is denoted by $\D_j:=P_{2^{j}}-P_{2^{j-1}}$; we write $|n|\simeq2^{j}$ to shorten $2^{j-1}\leq |n|\leq 2^j$ for $j\in\N$, while for $j=0$ $|n|\simeq 1$ shortens $|n|\leq 1$.

Let us now shorten
\bea
X_{j,N}&:=& 2^{j(s-\frac12)}\|\D_jP_Nu\|_{L^2}\,, \quad X_N := \sum_{j\geq0}X_{j,N}  \,\label{eq:defX} \\
Y_{j,N}&:=&\sum_{|n|\simeq 2^j}|u(n)|\,,\quad Y_N:= \sum_{j\geq0}Y_{j,N}\,.\label{eq:defY}
\eea
Then we obtain the following crucial lemma, whose proof can be found at the end of this section.
\begin{lemma}\label{lemma:L-Pbound}
We have
\be\label{eq:L-Pbound}
|F_N|\lesssim X_N^2Y_N^2\,.
\ee
\end{lemma}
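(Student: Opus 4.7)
My plan is to handle separately the two pieces $F_N^{\geq}$ and $F_N^{<}$ given by Lemma~\ref{Lemma:decomposizione}, and show each is bounded by $X_N^2Y_N^2$. The unifying tool will be the pair of auxiliary functions
$$v(x):=\sum_n|u(n)|e^{inx},\qquad w(x):=\sum_n|n|^{s-1/2}|u(n)|e^{inx},$$
which have non-negative Fourier coefficients and satisfy $\|v\|_{L^\infty}\leq Y_N$ and $\|w\|_{L^2}^2\leq \|u\|_{\dot H^{s-1/2}}^2\leq\sum_j X_{j,N}^2\leq X_N^2$, the last bound being the elementary $\ell^1\hookrightarrow\ell^2$ inclusion for non-negative sequences.

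For $F_N^{\geq}$, I will first invoke \eqref{eq:important-last-L21} to turn the defining constraint $|n_1-m_1|\geq\min(|n_1|,|m_1|)$ into
$$\frac{|m_1|^{2s}}{|m_1-n_1|}\lesssim\max(|n_1|,|m_1|)^{2s-1}.$$
The crucial step is then to transfer this one-sided weight onto two different frequencies using the momentum conservation $n_1+n_2=m_1+m_2$: writing $m_1=n_1+n_2-m_2$ (respectively $n_1=m_1+m_2-n_2$) and applying $(a+b+c)^{s-1/2}\lesssim a^{s-1/2}+b^{s-1/2}+c^{s-1/2}$, I obtain
$$\max(|n_1|,|m_1|)^{2s-1}\lesssim\sum_{\{p,q\}}|p|^{s-1/2}|q|^{s-1/2},$$
where the sum runs over the (at most five) unordered pairs $\{p,q\}\subset\{n_1,n_2,m_1,m_2\}$ that contain $m_1$ or $n_1$. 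For each such pair, Parseval will rewrite the corresponding quadrilinear sum either as $\|wv\|_{L^2}^2$ (when $w$ and $v$ appear together in both convolutions) or as $\tfrac{1}{2\pi}\int w^2\,\overline{v^2}$ (when the pair is $\{m_1,m_2\}$ or $\{n_1,n_2\}$). By H\"older both are controlled by $\|w\|_{L^2}^2\|v\|_{L^\infty}^2\leq X_N^2Y_N^2$.

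For $F_N^{<}$, the restriction $|m_1-n_1|<\min(|m_1|,|n_1|)$ forces $|m_1|\simeq|n_1|$ and $|m_1-n_1|^{k-1}\leq |n_1|^{k-1}$, so the $k$-th Taylor coefficient satisfies
$$\left|\frac{(m_1-n_1)^{k-1}|m_1|^k}{|m_1n_1|^{k-s}}\right|\leq |m_1|^{s}|n_1|^{s-1}\simeq |m_1|^{s-1/2}|n_1|^{s-1/2}.$$
Absolute convergence $\sum_{k\geq 1}|(s)_k|/k!<\infty$ of the Newton series at $x=1$ lets me pull the $k$-sum out, and what remains is precisely a quadrilinear sum of the $\{p,q\}=\{m_1,n_1\}$ type already treated.

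The hardest point will be the redistribution of the one-sided weight $|m_1|^{2s-1}$ in the region $|m_1|\geq|n_1|$ of $F_N^{\geq}$: any direct $L^2\cdot L^2\cdot L^\infty\cdot L^\infty$ H\"older split on dyadic pieces either requires a factor like $\|u\|_{\dot H^{2s-1}}$, which is not controlled by $X_N$, or leaves a divergent dyadic sum; momentum conservation must therefore be invoked to symmetrize the weight into two factors of $|\cdot|^{s-1/2}$ before the Plancherel closure can be applied.
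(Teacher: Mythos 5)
Your argument is correct, and it is fundamentally the same strategy as the paper's proof, though presented in a noticeably cleaner way. Both proofs start from the decomposition $F_N=F_N^{\geq}+F_N^{<}$ (you use the Fourier version from Lemma~\ref{Lemma:decomposizione}, while the paper restates it through Littlewood--Paley projectors in \eqref{eq:tildeFgeq}--\eqref{eq:tildeF<}), and the heart of both is the same observation: after bounding the multiplier by $\max(|n_1|,|m_1|)^{2s-1}$ via \eqref{eq:important-last-L21} (for $F_N^{\geq}$) or by $|n_1|^{s-1}|m_1|^s$ via \eqref{Obv2} and the binomial series (for $F_N^{<}$), momentum conservation $n_1+n_2=m_1+m_2$ lets one split the weight into two factors of size $|\cdot|^{s-1/2}$ landing on two distinct frequencies, after which a Cauchy--Schwarz/Plancherel closure yields $X_N^2Y_N^2$. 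The difference is only in how this closure is executed: the paper performs a dyadic Cauchy--Schwarz on the blocks $\Delta_{\ell_1},\Delta_{j_2}$ and then sums (its cases (B)/(C) are precisely your ``which of the five pairs carries the weight''), whereas you absorb the dyadic sums into the global auxiliary functions $v,w$ and apply Plancherel and H\"older directly, using $\|v\|_{L^\infty}\leq Y_N$ and $\|w\|_{L^2}^2\leq\sum_j X_{j,N}^2\leq X_N^2$. Your ``five-pair'' encoding of the case analysis is tidier and avoids spelling out the dyadic bookkeeping. One small point worth making explicit when you write this up: in the Plancherel step you implicitly use that $v,w$ have real non-negative Fourier coefficients, so that $\sum_k\widehat{wv}(k)^2=\sum_k|\widehat{wv}(k)|^2$ and $\sum_k\widehat{v^2}(k)\widehat{w^2}(k)=\tfrac{1}{2\pi}\int v^2\,\overline{w^2}$; this positivity is exactly why passing to $|u(n)|$ costs nothing. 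Also, since you only claim the bound for the truncated flow, $v$ and $w$ should be built from $P_N u$, i.e.\ the sums defining them should run over $|n|\leq N$.
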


Lemma \ref{lemma:L-Pbound} allows us to bound
\be\label{eq:union-bound-XY}
\tilde\g_{s,N}(|F_N|\geq t)\leq \tilde\g_{s,N}(X_NY_N\geq \sqrt t)\leq \tilde\g_{s,N}(X_N\geq t^{\frac12-\frac{1}{4s}})+\tilde\g_{s,N}(Y_N\geq t^{\frac{1}{4s}})\,,
\ee
so that we can treat the two contributes separately.

\begin{lemma}\label{lemma:subexpX}
There are $C,c>0$ and $l>1$ such that
\be\label{eq:sub-expX}
\tilde\g_{s,N}(X_N\geq t^{\frac12-\frac{1}{4s}})\leq \exp\left(-\frac{ct}{R^{\frac{2}{2s-1}}}\right)\,,\quad\forall t\geq (\log_2N)^{\frac{4s}{2s-1} l}\,. 
\ee
\end{lemma}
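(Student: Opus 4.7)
The plan is to decompose $X_N = X_N^{<} + X_N^{\geq}$ at a frequency threshold $J_0$, to bound $X_N^{<}$ deterministically using the $L^2$-ball indicator in $\tilde\g_{s,N}$, and to treat $X_N^{\geq}$ by block-wise chi-squared concentration combined with a union bound. On the support of $\tilde\g_{s,N}$ one has $\|\Delta_j P_N u\|_{L^2} \leq \|P_N u\|_{L^2} \leq R$, and hence
$$
X_N^{<} := \sum_{j=0}^{J_0} X_{j,N} \lesssim R \cdot 2^{J_0(s-1/2)}.
$$
The threshold $J_0$ will be chosen so that $2^{J_0} \simeq t^{1/(2s)} R^{-2/(2s-1)}$, which makes this contribution $\leq t^{(2s-1)/(4s)}/3$.

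Each block $X_{j,N}^2 = 2^{j(2s-1)} \sum_{|n|\simeq 2^j,\, |n|\leq N} |g_n|^2/(1+|n|^{2s})$ is a weighted sum of $\asymp 2^j$ independent $\chi^2_2$-variables with coefficients of size $\asymp 2^{-j}$ and expectation $\asymp 1$. A Laurent--Massart (equivalently Bernstein for sub-exponentials) inequality then yields
$$
\g_s(X_{j,N} \geq \lambda) \leq \exp(-c\,\lambda^2\, 2^{j}), \qquad \lambda \geq 2.
$$
I will apply this with the thresholds $\lambda_j := \max\bigl(2,\,\sqrt{T/(c\,2^j)}\bigr)$ for $J_0 < j \leq J_1 := \lfloor \log_2 N \rfloor$, where $T := c_1 t/R^{2/(2s-1)}$ and $c_1 > 0$ is small. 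By construction $\lambda_j^2 2^j \geq T/c$, so each event $\{X_{j,N} \geq \lambda_j\}$ has probability at most $e^{-T}$.

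The crucial balance is that
$$
\sum_{J_0 < j \leq J_1} \lambda_j \lesssim \sqrt{T}\, 2^{-J_0/2} + \log_2 N .
$$
By the specific choices of $T$ and $J_0$, the first term equals $\asymp t^{(2s-1)/(4s)}$ and can be made $\leq t^{(2s-1)/(4s)}/3$ by taking $c_1$ small. The second term is $\leq t^{(2s-1)/(4s)}/3$ exactly under the hypothesis $t \geq (\log_2 N)^{4s l/(2s-1)}$ with $l$ sufficiently large, which supplies the exponent $l>1$ in the statement. A union bound over the $\leq \log_2 N$ relevant values of $j$ then yields
$$
\tilde\g_{s,N}\bigl(X_N^{\geq} \geq \tfrac{2}{3} t^{(2s-1)/(4s)}\bigr) \leq (\log_2 N)\, e^{-T} \leq e^{-T/2},
$$
because the same hypothesis forces $T \gg \log \log N$. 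Combined with the deterministic bound on $X_N^{<}$, this gives \eqref{eq:sub-expX} with $c = c_1/2$.

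The main point to pay attention to is the delicate balancing in the budget: the matching relation $\sqrt{T}\, 2^{-J_0/2} \asymp t^{(2s-1)/(4s)}$ is forced, and it precisely determines $T \asymp t/R^{2/(2s-1)}$, explaining the exponent $R^{2/(2s-1)}$ which appears in the proposition. The range $t \geq (\log_2 N)^{4sl/(2s-1)}$ with $l>1$ is needed on two counts: to absorb the $\log_2 N$ contribution of indices $j$ where $\lambda_j$ saturates at $2$, and to guarantee that the union bound closes into the clean tail $e^{-T/2}$.
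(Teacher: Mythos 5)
Your proof is correct, and it is the same strategy as the paper's: split $X_N$ into low and high Littlewood--Paley blocks at the frequency cut $2^{J_0}\simeq t^{1/(2s)}R^{-2/(2s-1)}$ (the paper's $j_t$); dispose of the low part deterministically via the $L^2$-ball constraint in $\tilde\g_{s,N}$; and chain over high blocks using Bernstein-type concentration for $\|\Delta_j u\|_{L^2}^2$. The only genuine difference is in the bookkeeping of the chaining. The paper fixes a summable weight sequence $\sigma_j=c_0 j^{-l}$ with $\sum\sigma_j\leq1$, imposes the block threshold $\sigma_j t^{1/2-1/(4s)}$ and sums the resulting tail probabilities; the hypothesis $t\gtrsim(\log_2 N)^{4sl/(2s-1)}$ enters there to guarantee $\sigma_j^2 t^{(2s-1)/(2s)}\geq1$ for all $j\leq\log_2N$, so that the Bernstein minimum selects the sub-exponential branch. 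You instead fix the target tail $e^{-T}$ and choose adaptive, geometrically decaying thresholds $\lambda_j=\max(2,\sqrt{T/(c2^j)})$, after which the same hypothesis on $t$ is used twice: to absorb the $\log_2 N$ blocks where $\lambda_j$ saturates at $2$ into the total budget $\sum\lambda_j\lesssim t^{(2s-1)/(4s)}$, and to close the union bound $(\log_2 N)e^{-T}\leq e^{-T/2}$. Both are legitimate implementations of the same chaining estimate and yield the same dependence on $R$; your variant arguably makes the role of the threshold $J_0$ more transparent (it is exactly where the budget $\sqrt{T}2^{-J_0/2}\asymp t^{(2s-1)/(4s)}$ balances), while the paper's fixed-weight version avoids discussing the saturated blocks explicitly.
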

\begin{proof}

Let $$j_t:=\left\lfloor\log_2\frac{t^{\frac{1}{2s}}}{R^\frac{2}{2s-1}}\right\rfloor$$ 
and split
\be\label{eq:X++}
X_N=\sum_{0\leq j \leq j_t} X_{j,N} 
 +\sum_{j>j_t} X_{j,N} \,.
\ee

Since by definition of $\tilde \g_{s,N}$ we have $\|P_Nu\|_{L^2} \leq R$ on a set of full $\tilde \g_{s,N}$ measure, the following bound 
$$
\sum_{0\leq j \leq j_t} X_{j,N}  \leq \sum_{0 \leq j \leq  j_t} 2^{j(s-\frac12)}\|\D_j P_N u\|_{L^2}\leq R2^{j_t\left( s- \frac12 \right)} \leq t^{\frac12-\frac{1}{4s}}\,,
$$
holds $\tilde \g_{s,N}$-a.s., therefore
\be\label{eq:X-e1}
\tilde\g_{s,N}\Big(\sum_{0 \leq j \leq j_t} X_{j,N} \geq t^{\frac12-\frac{1}{4s}}\Big)=0\,.
\ee

Now we analyse the second summand of (\ref{eq:X++}). Let $c_0>0$ small enough so that
\be\label{eq:sigma}
\s_j:=c_0j^{-l}\,,\quad \sum_{j\in\N}\s_j\leq1 \,.
\ee
For any $j\in\N$ we have
\be\nonumber 
\tilde\g_{s,N}( X_{j,N} \geq \s_jt^{\frac12-\frac{1}{4s}})
=  \tilde\g_{s,N}( 2^{j(s-\frac12)}\|\D_jP_Nu\|_{L^2} \geq \s_jt^{\frac12-\frac{1}{4s}})\,.
\ee
By Bernstein inequality (\ref{eq_Bernstein}) 
\bea
\tilde\g_{s,N}( 2^{j(s-\frac12)}\|\D_jP_Nu\|_{L^2} \geq \s_jt^{\frac12-\frac{1}{4s}})&=&\tilde\g_{s,N}( \|\D_jP_Nu\|^2_{L^2} \geq 2^{-2j(s-\frac12)}\s^2_jt^{2\frac{2s-1}{4s}})\nn\\
&\leq& C\exp\left(-c2^j\min\left(\s_j^2t^{\frac{2s-1}{2s}},\s_j^4t^{\frac{2s-1}{s}}\,\right)\right)\,.
\eea
Therefore for any fixed $j$ we have
$$
\tilde\g_{s,N}( 2^{j(s-\frac12)}\|\D_jP_Nu\|_{L^2} \geq \s_jt^{\frac{1}{2} - \frac{1}{4s}})\leq Ce^{-\s_j^2 t^{\frac{2s-1}{2s}} 2^{-j}}
$$
provided 
$$
t\gtrsim j^{\frac{4s}{2s-1} l}\,. 
$$
Therefore the estimate extends to all $j\leq \lceil\log_2N\rceil$ for 
$$
t\gtrsim (\log_2N)^{\frac{4s}{2s-1} l}\,. 
$$ 
Then 
\bea
\tilde\g_{s,N}\Big(\sum_{j > j_t} X_{j,N} \geq t^{ \frac{1}{2} - \frac{1}{4s} }\Big) 
&\leq& \sum_{j > j_t} \tilde\g_{s,N}\Big(X_{j,N} \geq \s_jt^{ \frac{1}{2} - \frac{1}{4s} }\Big)
\nn\\
&\lesssim& \sum_{j > j_t} e^{-2^j\s_j^2t^{\frac{2s-1}{2s}}}\leq C\exp\left(-c\frac{t}{R^{\frac{2}{2s-1}}}\right)\,\nn
\eea
for some absolute constants $C,c>0$. 
\end{proof}

\begin{lemma}\label{lemma:subexpY}
There is $c>0$ such that
\be
\tilde\g_{s,N}(Y_N\geq t^{\frac{1}{4s}})\leq \exp\left(-\frac{ct}{R^{2(2s-1)}}\right)\,. 
\ee
\end{lemma}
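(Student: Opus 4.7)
The strategy parallels Lemma \ref{lemma:subexpX}: split $Y_N$ at a critical Littlewood--Paley threshold, handle the low frequencies deterministically via the $L^2$ ball constraint, and the high frequencies via Bernstein-type concentration on each dyadic shell.

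Define $j_t := \lfloor\log_2(t^{1/(2s)}/(CR^2))\rfloor$ for a small absolute constant $C$, and decompose $Y_N = \sum_{0\leq j \leq j_t} Y_{j,N} + \sum_{j>j_t} Y_{j,N}$. For the low-frequency part, Cauchy--Schwarz in the index $n$ gives $Y_{j,N}\leq 2^{j/2}\|\Delta_j P_N u\|_{L^2}$; a further Cauchy--Schwarz in $j$, together with $\|P_N u\|_{L^2}\leq R$ on the support of $\tilde\g_{s,N}$, yields
$$
\sum_{j\leq j_t} Y_{j,N} \leq \Big(\sum_{j\leq j_t}2^j\Big)^{1/2}\|P_N u\|_{L^2}\lesssim 2^{j_t/2}R \leq \tfrac12 t^{1/(4s)}
$$
deterministically, provided $C$ is large enough. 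In particular $\tilde\g_{s,N}\big(\sum_{j\leq j_t}Y_{j,N}\geq t^{1/(4s)}/2\big)=0$.

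For the high-frequency part I take the weights $\sigma_j=c_0 j^{-l}$ from (\ref{eq:sigma}) with $\sum_j \sigma_j\leq 1$. A union bound together with the same Cauchy--Schwarz inequality $Y_{j,N}\leq 2^{j/2}\|\Delta_j P_N u\|_{L^2}$ gives
$$
\tilde\g_{s,N}\Big(\sum_{j>j_t}Y_{j,N}\geq \tfrac12 t^{1/(4s)}\Big)\leq \sum_{j>j_t}\tilde\g_{s,N}\Big(\|\Delta_j P_N u\|_{L^2}^2\geq \frac{\sigma_j^2 t^{1/(2s)}}{4\cdot 2^j}\Big).
$$
Writing $\|\Delta_j P_N u\|_{L^2}^2=\sum_{|n|\simeq 2^j}(1+|n|^{2s})^{-1}|g_n|^2$ and applying (\ref{eq_Bernstein}) to the centered iid sub-exponential variables $|g_n|^2-1$, a computation strictly analogous to the one in Lemma \ref{lemma:subexpX} (with the new value of the threshold) produces the tail
$$
\tilde\g_{s,N}(Y_{j,N}\geq \tfrac12 \sigma_j t^{1/(4s)})\lesssim \exp\!\Big(-c\,\sigma_j^2\, t^{1/(2s)}\, 2^{j(2s-1)}\Big).
$$
At $j=j_t+1$ the exponent equals $\sigma_{j_t+1}^2 t/R^{2(2s-1)}$ up to constants, and it grows geometrically as $2^{(j-j_t)(2s-1)}$ for larger $j$. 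Summing the geometric series and absorbing the polylogarithmic factor $\sigma_{j_t+1}^{-2}\sim(\log_2 t)^{2l}$ into the constant, as in Lemma \ref{lemma:subexpX}, delivers the claimed bound $\exp(-ct/R^{2(2s-1)})$.

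The main technical obstacle is verifying the hypotheses of Bernstein's inequality: that for $j_t<j\leq \log_2 N$ the threshold $\sigma_j^2 t^{1/(2s)}2^{-j}/4$ exceeds twice the mean $\sim 2^{j(1-2s)}$ of $\|\Delta_j P_N u\|_{L^2}^2$, and that the quadratic term in the Bernstein minimum is the operative one. Both reduce to a lower bound of the form $t\gtrsim (\log_2 N)^{4sl/(2s-1)}$, identical in spirit to the one imposed in Lemma \ref{lemma:subexpX}; outside this range the conclusion is trivial because the right-hand side of (\ref{eq:sub-expY}) is of order one.
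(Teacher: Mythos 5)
Your low-frequency part (deterministic bound via the $L^2$-ball constraint, splitting at a dyadic threshold $j_t$) matches the paper's. The high-frequency part, however, is a genuinely different route, and it is the weaker one. The paper exploits that $Y_{j,N}=\sum_{|n|\simeq 2^j}|u(n)|$ is a sum of independent \emph{sub-Gaussian} random variables (each $|u(n)|=|g_n|/(1+|n|^{2s})^{1/2}$) and applies a Hoeffding-type bound directly, yielding the clean Gaussian tail $\exp\bigl(-ct^2/\sum_{|n|\simeq 2^j}\langle n\rangle^{-2s}\bigr)$ with no two-regime Bernstein minimum. You instead pass through Cauchy--Schwarz to $\|\Delta_j P_N u\|^2_{L^2}$, which is quadratic in the Gaussians, hence only \emph{sub-exponential}, and then apply Bernstein as in Lemma~\ref{lemma:subexpX}. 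For $X_{j,N}$ there is no way to avoid the quadratic form, so that is forced; but for $Y_{j,N}$ the paper deliberately stays with the linear functional, and that is exactly what buys the unconditional Gaussian tail in \eqref{eq:L-tail}.

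The cost of your detour is where a real gap appears. First, a small inconsistency: you write that ``the quadratic term in the Bernstein minimum is the operative one'', yet the tail you record, $\exp\bigl(-c\sigma_j^2 t^{1/(2s)}2^{j(2s-1)}\bigr)$, is the \emph{linear} term of \eqref{eq_Bernstein} (take $u=\sigma_j^2 t^{1/(2s)}2^{-j}/4$ and $\max_n|a_n|\simeq 2^{-2js}$). More importantly, the two hypotheses you promise to verify — threshold above twice the mean $E\|\Delta_j P_N u\|^2_{L^2}\simeq 2^{j(1-2s)}$, and linear term being the smaller of the two Bernstein candidates — both reduce to $\sigma_j^2 t^{1/(2s)}\gtrsim 2^{j(2-2s)}$. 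For $s\geq 1$ this is indeed polylogarithmic in $N$, namely $t\gtrsim (\log_2 N)^{4sl}$ (note the exponent is $4sl$, not $4sl/(2s-1)$). But for $\tfrac12<s<1$ the factor $2^{j(2-2s)}$ grows with $j$, and at $j\simeq \log_2 N$ the requirement becomes $t\gtrsim (\log_2 N)^{4sl}\,N^{4s(1-s)}$, a \emph{polynomial} restriction. In the regime where the lemma is invoked (Proposition~\ref{prop:sub-exp1} uses $t\geq\sqrt{N^{2s-1}}$), one has $\sqrt{N^{2s-1}}\ll N^{4s(1-s)}$ whenever $8s^2-6s-1<0$, i.e. for $s$ close to $\tfrac12$. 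So your claimed reduction to a polylogarithmic restriction does not hold, and the Bernstein step fails at the highest Littlewood--Paley blocks $j\lesssim\log_2 N$ for $s$ in the lower part of the admissible range. You cannot patch this with ``outside this range the conclusion is trivial'': the right-hand side of the lemma is exponentially small there, not of order one. This is precisely the portion of the argument for which the paper replaces the Bernstein-on-$\|\Delta_j\|^2$ machinery by the sharper Hoeffding-on-$Y_{j,N}$ inequality \eqref{eq:L-tail}.
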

\begin{proof}
We set now $$j_t:=\left\lfloor\log_2\frac{t^{\frac{1}{2s}}}{R^2}\right\rfloor\,.$$ Since $\|P_Nu\|_{L^2} \leq R$ on a set of full $\tilde \g_{s,N}$ measure, the simple inequality
$$
Y_{j,N}\leq 2^{\frac j2}\|\D_jP_N\|_{L^2}
$$
implies the following bound to hold $\tilde \g_{s,N}$-a.s.
\be
\sum_{0 \leq j \leq j_t} Y_{j,N}\leq \sum_{0 \leq j \leq j_t} 2^{\frac j2}\|\D_j P_N u\|_{L^2}\leq R2^{\frac{j_t}{2}}=t^{\frac{1}{4s}}\,.
\ee
Thus
\be\label{eq:L-e1}
\tilde\g_{s,N}\Big(\sum_{0 \leq j \leq j_t} Y_{j,N}\geq t^{\frac{1}{4s}}\Big)=0\,.
\ee

To estimate the contribution for $j>j_t$ we consider again the $\s_{j}>0$ defined in (\ref{eq:sigma}) and bound
\be\label{eq:L-interme}
\tilde\g_{s,N}\Big(\sum_{j>j_t}Y_{j,N}\geq t^{\frac{1}{4s}}\Big)\leq \sum_{j>j_t}\tilde\g_{s,N}\Big(Y_{j,N}\geq \s_jt^{\frac{1}{4s}}\Big)\,.
\ee
We have the following estimate for $Y_{j,N}$ from Hoeffding inequality
\be
\tilde\g_{s,N}(Y_{j,N}\geq t)\leq C\exp\left(-\frac{ct^2}{\sum_{|n|\simeq 2^j}\meanv{n}^{-2s}}\right)\,
\ee
for suitable constants $C,c>0$. 
Therefore
\be\label{eq:L-tail}
\tilde\g_{s,N}\left(Y_{j,N}\geq \s_jt^{\frac{1}{4s}}\right)\lesssim e^{-c\s_j^22^{j(2s-1)}t^{\frac{1}{2s}}}\quad\mbox{for some $c>0$}\,. 
\ee
Thus, noting
$$
2^{j_t(2s-1)}t^{\frac{1}{2s}} =\frac{t}{R^{2(2s-1)}} \, ,
$$
there is some $C,c>0$ such that we can bound 
\be
\mbox{r.h.s. of (\ref{eq:L-interme})}\lesssim\sum_{j>j_t }e^{-c\s_j^2 2^{j(2s-1)}t^{\frac{1}{2s}}}\leq C\exp\left(-\frac{ct}{R^{2(2s-1)}}\right)\,,
\ee
that concludes the proof.
\end{proof}

Combining (\ref{eq:union-bound-XY}) with Lemma \ref{lemma:subexpX} and Lemma \ref{lemma:subexpY} we prove Proposition \ref{prop:sub-exp1}. Finally the bulk of the distribution of $F$ is easier to bound.

\begin{proof}[Proof of Proposition \ref{prop:sub-exp2}]
Let us set $T:=\lfloor t^{\frac{2}{2s-1}}\rfloor$. Notice that since we restrict to $t\leq \sqrt{N^{2s-1}}$ we have $N > T$. We use the union bound
\be\label{eq:subexp2-use}
\tilde\g_{s,N}(|F_N|\geq t)\leq \g_{s}(|F_N-F_T|\geq t/2)+\tilde\g_{s,N}(|F_T|\geq t/2)\,. 
\ee
By Proposition \ref{prop:exp0} we have
\be
\g_{s}(|F_N-F_T|\geq t)\leq Ce^{-c\sqrt t T^{\frac{2s-1}{4}}}\leq Ce^{-ct}\,. 
\ee
On the other hand since $t > \sqrt{T^{2s-1}}$ the estimate (\ref{eq:sub-exp1}) applies to the second summand of \eqref{eq:subexp2-use}. This concludes the proof. 
\end{proof}

\begin{proof}[Proof of Lemma \ref{lemma:L-Pbound}]
The first step is to revisit the decomposition of Lemma \ref{Lemma:decomposizione}. We set
\bea
F_N^{\geq}&:=&\sum_{\substack{j_1,\ell_1\in\N\\J\geq \min(j_1,\ell_1)}} \Im \int \left( \D_{j_1}P_N\bar u\right)^{(2s)} \left( \D_{\ell_1}P_N u \right) \left( \D_{J}\mc I[P_Nu] \right)\,,\label{eq:tildeFgeq}\\
F_N^{<}&:=&\sum_{\substack{j_1,\ell_1\in\N\\J<\min(j_1,\ell_1)}} \Im \int \left( \D_{j_1}P_N\bar u\right)^{(2s)} \left( \D_{\ell_1}P_N u \right) \left( \D_{J}\mc I[P_Nu] \right)\,.\label{eq:tildeF<}
\eea
Of course it is
\be\label{eq:dec2}
F_N= F_N^{\geq}+ F_N^{<}\,.
\ee
With a slight abuse of notation we denote these two quantities by the same symbols as the ones of the decomposition of Lemma \ref{Lemma:decomposizione}. This will lighten the exposition and anyway the analogy between the two representations is clear. 
We will also need
\be\label{eq:mcI-L-P}
\D_J \mc I[P_Nu]=i\sum_{j_2,\ell_1\in\N}\sum_{\substack{|n_2|,|m_2|\leq N\\|n_2|\simeq 2^{j_2},\,|m_2|\simeq 2^{\ell_2}\\0\neq|n_2-m_2|\simeq 2^J}}e^{i(n_2-m_2)x}\frac{\bar u(m_2)u(n_2)}{n_2-m_2}\,.
\ee

Using (\ref{eq:mcI-L-P}) we bound

\be\label{eq:F>interm}
|F_N^{\geq}|\lesssim\Big|\sum_{\substack{j_1,j_2,\ell_1,\ell_2\in\N\\J\geq \min(\ell_1,j_1)}}\sum_{\substack{|n_1|,|n_2|,|m_1|,|m_2|\leq N\\|m_1|\simeq 2^{\ell_1},\,|n_1|\simeq 2^{j_1}\\|n_2|\simeq 2^{j_2}\,,|m_2|\simeq 2^{\ell_2}\\0\neq|n_1-m_1|\simeq 2^J\\n_1+n_2=m_1+m_2}}\frac{|m_1|^{2s}}{m_1-n_1}\bar u(m_1)\bar u(m_2)u(n_1)u(n_2)\Big|\,.
\ee
The constraints 
$$
|n_1-m_1|\simeq 2^J,  \quad |n_1| \simeq 2^{j_1}, \quad |m_1| \simeq 2^{\ell_1}
$$
in the inner sum enforces one of the following possibilities:
\begin{equation}
\begin{array}{ll}
\mbox{(A)} & n_1\simeq m_1 \quad \mbox{and} \quad J<\ell_1=j_1\, ,
\\
\mbox{(B)} & J=\ell_1> j_1\, ,
\\
\mbox{(C)} & J=j_1> \ell_1 \, , 
\end{array}
\end{equation}
However (A) is excluded by the condition $J\geq \min(\ell_1,j_1)$ in the outer sum.

$\bullet\,$Case (B).
Noting that in this case we have
$$
\frac{|m_1|^{2s}}{m_1-n_1} \lesssim 2^{(2s-1)\ell_1} \,.
$$
Since the sum is restricted over $n_1+n_2=m_1+m_2$ we can assume that at leas one between 
$n_2, m_2$ is comparable to $m_1$. To fix the notations we will assume this index to be $n_2$, so that 
$$
2^{\ell_1} \simeq |m_1| \simeq 2^{j_2} \simeq |n_2| \gtrsim |n_1|, |m_2| \,.
$$
We have 
\begin{align}\label{FinalRHS1}
(\ref{eq:F>interm})\Big|_{(B)}&\lesssim \sum_{\substack{j_1,j_2,\ell_1,\ell_2\in\N\\\ell_1> j_1}}2^{(2s-1)\ell_1}\sum_{\substack{|n_1|,|n_2|,|m_1|,|m_2|\leq N\\|m_1|\simeq 2^{\ell_1},\,|n_1|\simeq 2^{j_1}\\|n_2|\simeq 2^{j_2}\,,|m_2|\simeq 2^{\ell_2}\\n_1+n_2=m_1+m_2}} | u(m_1)| |  u(m_2)| |u(n_1)| |u(n_2)| \nn\\
&\lesssim \sum_{\substack{j_1,j_2,\ell_1,\ell_2\in\N}}2^{\ell_1(s-\frac12)}2^{j_2(s-\frac12)}\sum_{\substack{|n_1|,|n_2|,|m_1|,|m_2|\leq N\\|m_1|\simeq 2^{\ell_1},\,|n_1|\simeq 2^{j_1}\\|n_2|\simeq 2^{j_2}\,,|m_2|\simeq 2^{\ell_2}\\n_1+n_2=m_1+m_2}} | u(m_1)| | u(m_2)| |u(n_1)| |u(n_2)| \nn\\
\end{align}
Now for fixed $n_1, m_2$ we have, by Cauchy-Schwartz and Plancherel inequality
$$
2^{\ell_1(s-\frac12)}2^{j_2(s-\frac12)}\sum_{\substack{|n_2|,|m_1|\leq N\\|m_1|\simeq 2^{\ell_1}, \, |n_2|\simeq 2^{j_2} \\n_1+n_2=m_1+m_2}}
|u(m_1)| |u(n_2)| \leq 
 2^{\ell_1(s-\frac12)} \|  \Delta_{\ell_1} u \|_{L^2} 2^{j_2(s-\frac12)} \|  \Delta_{j_2}  u\|_{L^2}
$$
Plugging this into the r.h.s. of \eqref{FinalRHS1} we get
\begin{align}
(\ref{eq:F>interm})\Big|_{(B)}
& \lesssim
\left( \sum_{\ell_1 \in \N}  2^{\ell_1(s-\frac12)}   \|  \Delta_{\ell_1} u \|_{L^2} \right) 
\left(  \sum_{j_2 \in \N} 2^{j_2(s-\frac12)} \|  \Delta_{j_2}  u\|_{L^2} \right) 
\\ \nonumber
&  \quad \quad \quad \quad \quad \quad \quad \quad \quad 
\times \sum_{\substack{j_1, \ell_2\in\N}}  \sum_{\substack{|n_1|,|m_2|\leq N\\ |n_1|\simeq 2^{j_1} \,,|m_2|\simeq 2^{\ell_2}}}
| u(m_2)| |u(n_1)| 
\\ \nonumber
& 
\lesssim
X_N^2 
\left( \sum_{j_1 \in\N}  \sum_{\substack{|n_1|\simeq 2^{j_1} }}
 |u(n_1)|  \right)
\left( \sum_{\ell_2\in\N}  \sum_{ |m_2|\simeq 2^{\ell_2} } 
| u(m_2)|  \right) = X_N^2  Y_N^2
\end{align}

$\bullet\,$Case (C). Since $\ell_1 < j_1$ we have
$$
2^{(2s-1)\ell_1} \leq 2^{(s-\frac12)\ell_1} 2^{(s-\frac12) j_1}  
$$ 
thus
\bea
(\ref{eq:F>interm})\Big|_{(C)}&\lesssim& \sum_{\substack{j_1,j_2,\ell_1,\ell_2\in\N}}2^{(s-\frac12)\ell_1} 2^{(s-\frac12) j_1}  
\sum_{\substack{|n_1|,|n_2|,|m_1|,|m_2|\leq N\\|m_1|\simeq 2^{\ell_1},\,|n_1|\simeq 2^{j_1}\\|n_2|\simeq 2^{j_2}\,,|m_2|\simeq 2^{\ell_2}\\n_1+n_2=m_1+m_2}}|u(m_1)| |u(m_2)| |u(n_1)| |u(n_2)| \nn\\
\eea
and proceed as in the case (B), switching $n_1 \leftrightarrow n_2$. 

It remains to bound 
$$
|F^{<}_N| = \Big|\Im\Big(\sum_{\substack{j_1,j_2,\ell_1,\ell_2\in\N\\J< \min(\ell_1,j_1)}}\sum_{\substack{|n_1|,|n_2|,|m_1|,|m_2|\leq N\\|m_1|\simeq 2^{\ell_1},\,|n_1|\simeq 2^{j_1}\\|n_2|\simeq 2^{j_2}\,,|m_2|\simeq 2^{\ell_2}\\0\neq|n_1-m_1|\simeq 2^J\\n_1+n_2=m_1+m_2}}\frac{|m_1|^{2s}}{m_1-n_1}\bar u(m_1)\bar u(m_2)u(n_1)u(n_2)\Big)\Big|.
$$
Since $J< \min(\ell_1,j_1)$ we are summing over $n_1\neq0$. 
Using the fractional binomial identity (recall $n_2 - m_2 = m_1-n_1$)
\begin{align}
\frac{|m_1|^{2s}}{m_1-n_1} 
& = \frac{|m_1|^{s}|n_1+n_2-m_2|^s}{m_1-n_1}
\\ \nonumber
&
= \frac{|m_1|^s |n_1|^{s}\left(1+\frac{n_2-m_2}{|n_1|}\right)^s}{m_1-n_1}
= \sum_{k \geq 0}\frac{(s)_k}{k!} \frac{|m_1|^s |n_1|^{s}(m_1-n_1)^{k-1}}{|n_1|^k}
\end{align}
and the cancellation already exploited in Lemma~\ref{Lemma:decomposizione} (see formula (\ref{eq:canc})) 
\be\label{eq:canc2}
\Im\Big(\sum_{\substack{j_1, j_2, \ell_1, \ell_2 \in\N\\J< \min(\ell_1,j_1)}}\sum_{\substack{|n_1|,|n_2|,|m_1|,|m_2|\leq N\\|m_1|\simeq 2^{\ell_1},\,|n_1|\simeq 2^{j_1}\\|n_2|\simeq 2^{j_2}\,,|m_2|\simeq 2^{\ell_2}\\0\neq|n_1-m_1|\simeq 2^J\\n_1+n_2=m_1+m_2}}\frac{|m_1|^s |n_1|^{s}}{m_1-n_1}\bar u(m_1)\bar u(m_2)u(n_1)u(n_2)\Big)=0\,,
\ee
we arrive to
\begin{equation}
|F^{<}_N|
\lesssim 
\Big|\sum_{k\geq1}\frac{(s)_k}{k!}\Im\Big(\sum_{\substack{j_1,j_2,\ell_1,\ell_2\in\N\\J< \min(\ell_1,j_1)}}\sum_{\substack{|n_1|,|n_2|,|m_1|,|m_2|\leq N\\|m_1|\simeq 2^{\ell_1},\,|n_1|\simeq 2^{j_1}\\|n_2|\simeq 2^{j_2}\,,|m_2|\simeq 2^{\ell_2}\\0\neq|n_1-m_1|\simeq 2^J\\n_1+n_2=m_1+m_2}}\frac{|m_1|^s |n_1|^{s}(m_1-n_1)^{k-1}}{|n_1|^k}\bar u(m_1)\bar u(m_2)u(n_1)u(n_2)\Big)\Big|\nn\,\\\label{eq:last}\,.
\end{equation}
Since $J< \min(\ell_1,j_1)$ implies $|n_1-m_1| \leq \frac12 |n_1|$ and that $n_1$ and $m_1$ are comparable, bringing the modulus inside and summing over $k$
we arrive to
\begin{equation}
|F^{<}_N|
 \lesssim 
 \sum_{\substack{j_1,j_2,\ell_1,\ell_2\in\N}}2^{(s-\frac12)\ell_1} 2^{(s-\frac12) j_2}  
\sum_{\substack{|n_1|,|n_2|,|m_1|,|m_2|\leq N\\|m_1|\simeq 2^{\ell_1},\,|n_1|\simeq 2^{j_1}\\|n_2|\simeq 2^{j_2}\,,|m_2|\simeq 2^{\ell_2}\\n_1+n_2=m_1+m_2}}|u(m_1)| |u(m_2)| |u(n_1)| |u(n_2)|
\end{equation}
and we proceed as in the case (B).
\end{proof}



\begin{thebibliography}{09}

\bibitem{B94} J. Bourgain,  {\em Periodic nonlinear Schr\"odinger equation and invariant measures}, Commun. Math. Phys. 166 (1994), 1-26.

\bibitem{ber} J. Brereton, {\em Invariant measure construction at a fixed mass}, Nonlinearity 32 (2019), 496-558.

\bibitem{CM}  R.~Cameron,  W.~ Martin,  {\em Transformations of Wiener Integrals under Translations}. Annals of Mathematics. 45 (1944),  386--396.

\bibitem{ABC1} A. B. Cruzeiro, {\em Equations diff\'erentielles ordinaires: non- explosion et measures quasi invariantes}, J. Funct. Anal. 54 (1983),  193-205.
%
\bibitem{ABC2} A. B. Cruzeiro, {\em Equations diff\'erentielles sur l'espace de Wiener et formules de Cameron-Martin non lin\'eaires}, J. Funct. Anal. 54 (1983),  206-227.

\bibitem{deb} A. Debussche, Y. Tsustumi, {Quasi-Invariance of Gaussian Measures Transported by the Cubic NLS with Third-Order Dispersion on $\T$}, arXiv:2002.04899 (2020). 

\bibitem{deng}  Y. ~Deng,  A.~Nahmod,  H.~Yue,  {\em Optimal local well-posedness for the periodic derivative nonlinear Schr\"odinger equation} arXiv:1905.04352 (2019).

\bibitem{DTV}  Y.~Deng,  N.~Tzvetkov, N.~Visciglia, {\em Invariant measures and long time behaviour for the Benjamin-Ono equation III},  Comm. Math. Phys. 339 (2015), 815--857. 

\bibitem{forl}  J. Forlano, W. Trenberth, {\em On the transport of Gaussian measures under the one-dimensional fractional nonlinear Schr\"odinger equation}, Ann. Inst. Henri Poincare, Anal. Non Lineaire, 36 (2019), 1987-2025.

\bibitem{GLV1} G.~Genovese,  R.~Luc\`a,  D.~Valeri, {\em Gibbs measures associated to the integrals of motion of the periodic derivative nonlinear Schr\"odinger equation}, Sel. Math. New Ser.  22 (2016),  1663-1702.
%
\bibitem{GLV2}  G.~Genovese,  R.~Luc\`a,  D.~Valeri, {\em Invariant Measures for the periodic derivative nonlinear Schroedinger equation},  Math. Annalen, 374 (2019),1075-1138.

\bibitem{girs} I.~Girsanov,  {\em On transforming a certain class of stochastic processes by absolutely continuous substitution of measures}. 
Theory of Probability and its Applications. 5  (1960)  285--301.

\bibitem{GOTW} T. S. Gunaratnam, T. Oh, N. Tzvetkov, H. Weber, {\em Quasi-invariant Gaussian measures for the nonlinear wave equation in three dimensions}, arXiv:1808.03158 (2018).

\bibitem{GH}  A.~Gr\"unrock, S.~Herr, {\em Low regularity local well-posedness of the derivative nonlinear Schr\"odinger equation with periodic initial data}, SIAM J. Math. Anal. 39 (2008), 1890-1920.

\bibitem{Herr} S. Herr, \emph{On the Cauchy problem for the Derivative nonlinear Schr\"odinger equation with periodic boundary condition}, Int. Math. Res. Notices (2006).
%

\bibitem{JLPS} R. Jenkins,  J. Liu, P. Perry, C. Sulem, {\em Global Existence for the Derivative Nonlinear Schr\"odinger Equation with Arbitrary Spectral Singularities}, arXiv:1804.01506 [math.AP].

\bibitem{kuo}  H.~Kuo, {\em Gaussian Measures in Banach Spaces}, Springer-Verlag (1975).

\bibitem{mal} P. Malliavin, {\em Stochastic Analysis}, Springer (1997).
%
\bibitem{NOR-BS12}  A.~Nahmod,  T.~Oh,  L.~Rey-Bellet,  G.~Staffilani, {\em Invariant weighted Wiener measures and  almost sure global well-posedness for the periodic derivative NLS}, J. Eur. Math. Soc. (JEMS)  14 (2012), 1275-1330.
%
\bibitem{NR-BSS11}  A.~Nahmod,  L.~Rey-Bellet,  S.~Sheffield,  G.~Staffilani, {\em Absolute continuity of Brownian bridges under certain gauge transformations}, Math. Res. Letters 18 (2011), 875-887.
%
\bibitem{OT1} T. Oh, N. Tzvetkov, {\em Quasi-invariant Gaussian measures for the cubic fourth order nonlinear Schr\"odinger equation}, Probab. Theory Relat. Fields 169 (2017), 1121-1168.
%
\bibitem{OT2} T. Oh, N. Tzvetkov, {\em Quasi-invariant Gaussian measures for the two-dimensional defocusing cubic nonlinear wave equation}, to appear on JEMS .
%
\bibitem{OT3} T. Oh, P. Sosoe, N. Tzvetkov, {\em An optimal regularity result on the quasi-invariant Gaussian measures for the cubic fourth order nonlinear Schr\"odinger equation}, J. Ec. Polytechnique, Math., 5 (2018), 793-841.
%
\bibitem{OTT} T. Oh, Y. Tsutsumi and N. Tzvetkov, {\em Quasi-invariant Gaussian measures for the cubic nonlinear Schroedinger equation with third-order dispersion}, C. R. Acad. Sci. Paris, Ser. I, 357 (2019), 366-381.
%
\bibitem{PTV} F. Planchon, N. Tzvetkov and N. Visciglia, {\em Transport of Gaussian measures by the flow of the nonlinear Schr\"odinger equation}, arXiv:1810.00526 (2019).
%
\bibitem{ramer} R.~Ramer, {\em On nonlinear transformations of Gaussian measures}, Journal of Functional Analysis 15 (1974), 166-187.

\bibitem{phil} P. Sosoe, W. J. Trenberth and T. Xiao, {\em Quasi-invariance of fractional Gaussian fields by nonlinear wave equation with polynomial nonlinearity}, arXiv:1906.02257v1 (2019).

\bibitem{TT10}  L.~Thomann,  N.~Tzvetkov, {\em Gibbs measure for the periodic derivative nonlinear Schr\"odinger  equation}, Nonlinearity 23 (2010),  2771-2791.
%
\bibitem{Tz10} N.~Tzvetkov, {\em Construction of a Gibbs measure associated to the periodic Benjamin-Ono equation}, Probability theory and related fields 146 (2010), 481-514.

\bibitem{sigma} N. Tzvetkov, {Quasi-invariant Gaussian measures for one dimensional Hamiltonian PDEs}, Forum Math. Sigma 3 (2015), e28, 35 pp.

\bibitem{TV13a} N. Tzvetkov, N. Visciglia,  {\em Gaussian measures associated to the higher order conservation laws of the Benjamin-Ono equation},  Ann. Sci. ENS 46 (2013) 249--299 (2013).

\bibitem{TV13b}  N.~Tzvetkov,  N.~Visciglia, {\em Invariant measures and long time behaviour for the Benjamin-Ono equation}, Int. Math. Res. Not. 17 (2014) 4679--4614.
%
\bibitem{TV14} N.~Tzvetkov,  N.~Visciglia, {\em  Invariant measures and long time behaviour for the Benjamin-Ono equation II},  J. Math. Pures Appl. 103 (2015), 102--141.

\bibitem{ust}  A.S.~ \"Ust\"unel,  M.~Zakai,  {\em Transformation of measure on Wiener space}, Springer Science \& Business Media, (2013).
%
%





\end{thebibliography}
\end{document}